\documentclass[a4paper,10pt,notitlepage,reqno]{article}
\usepackage[english]{babel}
\usepackage{amsmath,amssymb,amsthm,bm}
\usepackage{mathtools}
\usepackage{mathrsfs}
\usepackage{enumitem}
\usepackage{authblk}
\usepackage{cite}
\usepackage{xcolor}
\usepackage{tikz}
\usepackage{stackrel}
\usepackage{geometry}
\usepackage{hyperref}
\theoremstyle{plain} 
\newtheorem{theorem}{Theorem}[section]
\newtheorem{lemma}[theorem]{Lemma}
\newtheorem{proposition}[theorem]{Proposition}

\theoremstyle{definition}
\newtheorem*{definition}{Definition}

\theoremstyle{remark}
\newtheorem{remark}[theorem]{Remark}
\newtheorem{assumptionX}{Assumption}
\newenvironment{assumption}[1]
  {\renewcommand{\theassumptionX}{#1}\begin{assumptionX}}
  {\end{assumptionX}}

\numberwithin{equation}{section}

\DeclareMathOperator{\realpart}{Re}
\renewcommand{\Re}{\realpart}

\DeclareMathOperator{\imaginarypart}{Im}
\renewcommand{\Im}{\imaginarypart}

\DeclareMathOperator{\curl}{curl}

\newcommand{\vertiii}[1]{{\left\vert\kern-0.25ex\left\vert\kern-0.25ex\left\vert #1 
    \right\vert\kern-0.25ex\right\vert\kern-0.25ex\right\vert}}

\newcommand{\R}{\mathbb{R}}
\newcommand{\C}{\mathbb{C}}
\newcommand{\D}{\mathcal{D}}
\newcommand{\Z}{\mathbb{Z}}
\newcommand{\ip}[2]{\left\langle #1,\,#2 \right\rangle}

\newcommand{\normeq}[1]{{\left\vert\kern-0.25ex\left\vert\kern-0.25ex\left\vert #1 
    \right\vert\kern-0.25ex\right\vert\kern-0.25ex\right\vert}}

\renewcommand{\thefootnote}{\fnsymbol{footnote}}  

\usepackage{color}
\usepackage[normalem]{ulem}
\definecolor{DarkGreen}{rgb}{0,0.5,0.1} 

\newcommand\soutL{\bgroup\markoverwith
{\textcolor{DarkGreen}{\rule[.5ex]{2pt}{1pt}}}\ULon}

\newcommand{\Hm}[1]{\leavevmode{\marginpar{\tiny%
$\hbox to 0mm{\hspace*{-1.5mm}$\leftarrow$\hss}%
\vcenter{\vrule depth 0.1mm height 0.1mm width \the\marginparwidth}%
\hbox to
0mm{\hss$\rightarrow$\hspace*{-1.5mm}}$\\\relax\raggedright #1}}}

\newcommand\nnfootnote[1]{%
  \begin{NoHyper}
  \renewcommand\thefootnote{}\footnote{#1}%
  \addtocounter{footnote}{-1}%
  \end{NoHyper}
}

\title{\textbf{Relativistic Virial Operators }}

\author[1]{Lucrezia Cossetti} 
\author[1,2]{Luca Fanelli}		
\author[3,2]{Fabio Pizzichillo}

\affil[1]{Ikerbasque \& Universidad del Pa\'is Vasco/Euskal Herriko Unibertsitatea, UPV/EHU, \newline Aptdo. 644, 48080, Bilbao, Spain; lucrezia.cossetti@ehu.eus; luca.fanelli@ehu.es}

\affil[2]{BCAM-Basque Center for Applied Mathematics, Mazarredo, 14 E48009 Bilbao, Spain}

\affil[3]{Departamento de Matemática e Informática Aplicadas a la Ingeniería Civil y Naval, Universidad Politécnica de Madrid, E.T.S.I. de Caminos, Canales y Puertos, calle del Profesor Arangueren 3, 28040, Madrid, Spain; fabio.pizzichillo@upm.es}

\begin{document}

\date{\small 24 March 2026}
\maketitle




\begin{abstract}
	\noindent
	When studying Dirac operators, it is well known that the phenomenon of \emph{Zitterbewegung} leads to a lack of convexity of the variance, which creates difficulties in the analysis of dispersive properties. In particular, standard virial methods are harder to implement in the Dirac setting. In this paper, we introduce a new approach based on the center-of-energy operator, leading to a family of relativistic virial identities. As an application,
we establish spectral stability results for perturbed Dirac operators and prove local smoothing estimates for the associated evolution equation.

\end{abstract}

\nnfootnote{\emph{2020 Mathematics Subject Classification:81Q10, 81Q15, 35Q41}.}
\nnfootnote{\emph{Keywords}. Dirac operators, Relativistic virial identities, Absence of point spectrum, Local-smoothing estimates}


\section{Introduction}

In quantum mechanics, the evolution of a physical system of Hamiltonian $H$ is governed by the time-dependent Schr\"odinger equation
\begin{equation}\label{eq:schro}
  i \partial_t \psi(x, t) = H \psi(x, t),
\end{equation}
which describes the evolution of the quantum state (or wavefunction) $\psi.$

The complex-valued wavefunction $\psi(x, t) \in \mathbb{C}$ encodes all the probabilistic information about the state of a particle at position $ x $ and time $ t:$ the density $|\psi(x, t)|^2$ gives the probability of detecting the particle at the point $ x $ at time $ t ,$ assuming the wavefunction is normalized.

\smallskip
In the simplest case of a single particle moving in a region where a potential $V(x)$ is present, the Hamiltonian $H,$ which describes the total energy of the system, takes the form
\begin{equation}\label{eq:ham}
H =H_0 + V(x):= \frac{p^2}{2m} + V(x)
\end{equation}
where $ p = -i \nabla $ represents the momentum operator, and $ m $ is the mass of the particle. 
In the Heisenberg picture of quantum mechanics, one is interested in the evolution of the relevant \emph{observables} associated to the system (position, momentum, energy, spin, angular momentum, etc.)~and their \emph{expectation values}, rather than in the evolution of the state $\psi$ itself. The time derivative of the expectation value of an observable $A$ which does not depend on time is given by 
\begin{equation}\label{eq:heisenberg}
\frac{d}{dt} \langle A \rangle_\psi = \langle i [H, A] \rangle_\psi,
\end{equation}
where we have defined $\langle A\rangle_\psi:= \langle \psi, A \psi \rangle$  (note that our convention is that the inner product $\langle\cdot, \cdot \rangle$ is linear in the first component) and, as customarily, $[H, A] = HA - AH$ denotes the commutator of $ H $ and $ A.$ Equation~\eqref{eq:heisenberg} shows the central role that commutators play in the time evolution of physical quantities in quantum theory: if $ [H, A] = 0, $ then the observable $A$ is conserved, if not, its time evolution is governed by the commutator.

A fundamental example is given by the \emph{position} observable, corresponding to the choice $A = x_j$ in~\eqref{eq:heisenberg}, for $j = 1,\dots,d$. We have
\begin{equation*}
	\dot{x_j}=i[H, x_j]=\frac{p_j}{m}, 
	\qquad
	\dot{p_j}=i[H, p_j]=-\partial_j V(x), 
\end{equation*}
to be read as an identity for the expectations (\emph{cfr.}~\eqref{eq:heisenberg}). This then yields to the Ehrenfest's Theorem
\begin{equation*}
	m\ddot{x}=-\nabla V(x), 
\end{equation*}
which is the quantum analogue of Newton's law from classical mechanics.

For simplicity, from now on, we will set $m=\tfrac{1}{2},$ therefore~\eqref{eq:ham} reduces to 
\begin{equation*}
	H=H_0 + V(x), \qquad H_0=-\Delta.
\end{equation*}
In the description of the Schrödinger's dynamics another central role is played by the \emph{variance} of the position: 
considering $A=|x|^2,$ we denote $I(t):=\langle |x|^2 \rangle_{\psi}.$ Introducing the \emph{generator of dilations} operator 
\begin{equation}\label{eq:generator-dilations}
G:=\frac{1}{2}\{p,x\}
=\frac{p\cdot x+x\cdot p}{2}
=\frac{i}{4}[H_0,|x|^2]
=\frac{i}{4}[H, |x|^2],
\end{equation}
 one obtains the \emph{virial identity}
\begin{equation}\label{eq:virial-Schro}
	\dot{I}(t)=4\langle G \rangle_{\psi},
	\qquad 
	\ddot{I}(t)=4\langle i[H,G] \rangle_{\psi}
	=-\langle [H,[H,|x|^2]] \rangle_{\psi}
	=8\langle H_0 \rangle_{\psi} - 4\langle x\cdot \nabla V(x) \rangle_{\psi}. 
\end{equation}  
In particular, if $x\cdot \nabla V\leq 0,$ then $I(t)$ is convex, reflecting the dispersive nature of the Schrödinger evolution.
For stationary states $H\psi=E\psi,$ that is for states for which the time dependence is given through the multiplication by a phase factor, namely  $e^{itE}\psi,$ the variance is constant in time, hence identity~\eqref{eq:virial-Schro} reduces to the well known \emph{virial theorem}
\begin{equation}\label{eq:virial0}
2 \langle H_0 \rangle_\psi = \langle x \cdot \nabla V(x) \rangle_\psi,
\end{equation}
which expresses a balance between kinetic and potential contributions to the total energy (\emph{cf.}~the recent survey~\cite{CossettiKrejcirik2024}). In the case of homogeneous potentials $V(\lambda x)=\lambda^k V(x),$ it further specialises to  
$2 \langle H_0 \rangle_\psi = k \langle V \rangle_\psi.$

\medskip
These virial identities have proved to be fundamental in PDE analysis.
They yield Pohozaev-type constraints in stationary nonlinear models~\cite{Pohozaev}, serve to establish
blow-up criteria, as shown for the focusing NLS by Glassey and Zakharov~\cite{Glassey1977,Zakharov1972} and provide
Morawetz-type space-time bounds~\cite{Morawetz} as well as  local smoothing estimates
\cite{CS1988,Sjolin1987,Vega1988,BarceloRuizVega97,PerthameVega1999}. Due to their fundamental consequences, which extend well beyond the aforementioned results, virial identities are nowadays considered as an indispensable tool for understanding linear and nonlinear dynamics.

\medskip
When relativistic effects are taken into account, the Schrödinger Hamiltonian no longer suffices to give a realistic description of the system, as a matter of fact the Dirac Hamiltonian turns out to be the appropriate model. In space dimension $d\geq 1,$ this operator is defined as follows:
\begin{equation}\label{eq:Dirac}
 	H_D = \alpha\cdot p + m\beta, 
 	\qquad p=-i\nabla,
 	\qquad x\in \R^d,
\end{equation}
where $m\geq 0$ is the mass parameter, and $\alpha=(\alpha_1,\dots,\alpha_d)$ and $\beta$ are
Hermitian matrices acting on $\mathbb{C}^N$, with $N=2^{\lfloor (d+1)/2\rfloor}$, satisfying the Clifford algebra relations
\begin{equation}\label{eq:commutation-rel-dirac-matr}
\{\alpha_j,\alpha_k\}=2\delta_{jk}I, \qquad \{\alpha_j,\beta\}=0, \qquad \beta^2=I.
\end{equation}
The integer $N$ coincides with the dimension 
of the irreducible complex representation of the Clifford algebra $\mathrm{Cl}_{1,d}(\R)$ (\emph{e.g.} $N=2$ for $d=1,2$; $N=4$ for $d=3,4$; $N=8$ for $d=5,6$ etc.).
For the existence of such matrices, see \cite{Friedrich2000}.

Accordingly, the evolution of the wavefunction $\psi\colon\R\times\R^d\to\C^N$ is governed by the Dirac equation
\begin{equation}\label{eq:free-dirac-evolution}
 i\partial_t \psi = H_D \psi.
\end{equation}

The wavefunction $\psi(x,t)$ is a $N$-component spinor,  this reflects the internal spin and the particle-antiparticle nature inherent to the relativistic theory.
%

The relativistic dynamics displays remarkable new features. In particular, one observes that the observable position does not behave classically: 
Indeed, choosing $A = x_j$, for $j = 1,\dots,d$, in~\eqref{eq:heisenberg}, and letting $\psi$ be a solution to~\eqref{eq:free-dirac-evolution}, we obtain
\begin{equation*}
\dot{x_j}= i[H_D, x_j] = \alpha_j,
\qquad 
\dot{\alpha_j}=i[H_D,\alpha]=2i(p_j-H_D\alpha_j).
\end{equation*}
This identity indicates that the velocity operator is not given in terms of momentum $p$, as in classical relativistic kinematic, but rather by the matrix-valued operator $\alpha$. Furthermore, since the velocity operator does not commute with the Hamiltonian $H_D$, the acceleration is non zero. One can further see that 
\begin{equation*}
\alpha(t)   = H_D^{-1} p 
      + \big(\alpha - H_D^{-1} p \big)\, e^{-2i H_D t}.
\end{equation*}
Thus, the velocity rapidly oscillates around the mean value $H_D^{-1} p \sim \frac{p}{E_p}$ (which is just the \emph{classical} velocity operator) with frequency $2E_p,$ where $E_p:=\sqrt{|p|^2 +m^2},$ leading to the phenomenon of \emph{Zitterbewegung} (trembling motion). This phenomenon is known to arise due to the interference between positive and negative energy components of the Dirac spinor (more details can be found in~\cite[Ch.1.6]{Thaller92-book}). 

Looking at the evolution of the position operator, one sees that
\begin{equation}\label{eq:position}
x(t) = x(0) + p H_D^{-1} t + \frac{i}{2} H_D^{-1} \left( \alpha(0) - pH_D^{-1} \right) e^{-2iH_D t}.
\end{equation}
As a consequence of this fact the variance $I(t):=\langle |x|^2\rangle_\psi$ does not satisfy a simple
convexity law as in~\eqref{eq:virial-Schro}.

Nevertheless, for stationary eigenstates of $H=H_D+V$, \emph{i.e.} for states $\psi$ such that $H\psi = E\psi,$ one can still deduce a
virial identity. Indeed, using that for any observable $A$ one has the operator relation $\langle\psi,[H,A]\psi\rangle=0$ and then choosing $A=G$ gives
\begin{equation}\label{eq:virial-rel}
 \langle \alpha\cdot p\rangle_{\psi} = \langle x\cdot\nabla V\rangle_{\psi},
\end{equation}
which is the natural relativistic analogue of~\eqref{eq:virial0} expressing the balance between the relativistic kinetic energy and the potential energy. As a matter of fact, in the non-relativistic limit (regime where $\alpha \cdot p \sim p^2/m$), identity~\eqref{eq:virial-rel} reduces to~\eqref{eq:virial0}. Nevertheless,  unlike the Schrödinger case, one is not able to recover identity~\eqref{eq:virial-rel} from the dynamics of the variance, but only from the algebraic structure of the Dirac operator. In other words, we cannot read~\eqref{eq:virial-rel} as a second time derivative or a double commutator identity for $H,$ as it is the case for~\eqref{eq:virial-Schro}. Moreover, note that since $ \langle \alpha\cdot p\rangle_{\psi}$ has no definite sign, even if $x\cdot \nabla V\leq 0,$ the quantity $\langle\psi,[H,A]\psi\rangle$ fails to be positive, in contrast to the Schrödinger case. 
The preceding considerations make it clear that virial methods are more difficult to implement in the Dirac setting and, in principle, must be adapted to the distinctive algebraic structure of the Dirac operator (we refer to the recent result~\cite{HMM2025,HMM2026}, where new $L^2$-based virial-type identities are developed and employed to study long-time behavior of solutions to nonlinear Dirac equations).

\medskip
The aim of this paper is to introduce new relativistic virial-type identities that admit a dynamical interpretation and that lead naturally to the investigation of new observables that commute positively with the Dirac operator.

To this purpose, let us start with a simple but crucial algebraic
identity: for any couple of operators $T,A$ (we disregard for now domain issues), we have
\begin{equation}\label{eq:mainmain}
 [T,\{T,A\}] = [T^2,A],
\end{equation}
where $\{T,A\}=TA+AT$ is the anti-commutator between $T$ and $A.$
Specializing~\eqref{eq:mainmain} to $T=H_D$, one finds
\begin{equation}\label{eq:algebraic}
 [H_D,\{H_D,A\}] = [H_D^2,A] =\big[H_0,A\big],
\end{equation}
where we have used the well-known super-symmetry property of the free Dirac operator, namely $H_D^2=(H_0 + m^2)I.$

The commutator-anti-commutator identity~\eqref{eq:algebraic} expresses that the \emph{relativistic} evolution of $\{H_D,A\}$ mirrors the \emph{non-relativistic}
evolution of $A$ (see~\eqref{eq:heisenberg}). This suggests that relativistic virial identities need to be understood as \emph{first derivative} of suitable observable driven by the non-relativistic dynamic.

The intuition behind identity~\eqref{eq:algebraic} comes from the so-called \emph{center-of-energy} operator $N=\{H_D,x\},$ which measures the position weighted by the energy distribution, in contrast with the usual position operator which refers only to probability distribution, and from the commutation relation of this operator with the free Dirac operator $H_D$ (see~\cite[Appendix 1.C]{Thaller92-book}), namely
\begin{equation*}
	[H_D,N]=[H_0,x]
	\quad \text{or, equivalently,} \quad
	[H_D, \{H_D,x\}]=[H_0,x].
\end{equation*}  
Identity above shows that the relativistic evolution of $N$ is driven by the non-relativistic velocity (\emph{cf.}~\eqref{eq:heisenberg}). 

Now, due to the fundamental \emph{positive}-commutator identity
\begin{equation}\label{eq:positive-commutator}
	[H_0, iG]=2H_0,
\end{equation}
(\emph{cf.}~\eqref{eq:virial-Schro}), it is natural to search for the observable whose relativistic evolution corresponds to the non-relativistic evolution of the generator of dilations $G$. In other words, one aims to find an operator $\mathsf{X}$ verifying
\begin{equation}\label{eq:question}
	[H_D, \mathsf{X}]=[H_0,iG].
\end{equation}
Relation~\eqref{eq:algebraic} already contains the answer, namely $\mathsf{X}=\{H_D,iG\},$ and leads to the introduction of a new relativistic virial-type operator. 

\begin{definition}[Relativistic virial operator]\label{def:L}
Let $H_D$ be the Dirac operator defined in~\eqref{eq:Dirac} and let $G$ be the generator of dilations operator defined in~\eqref{eq:generator-dilations}. We define the \emph{relativistic virial operator} as the solution to~\eqref{eq:question}, namely the operator 
\begin{equation}\label{eq:L}
L:=\{H_D,iG\}.
\end{equation}
The operator $L$ truly plays the role of a relativistic virial operator, intertwining, by construction, the Dirac dynamics with dilation symmetries. 
To the best of our knowledge, such an operator does not appear in the standard literature, and its definition seems to be new. 

The new relativistic virial operator $L,$ defined in~\eqref{eq:L}, can be rewritten alternatively as 
\begin{equation*}
	L=-\tfrac{1}{4}\left\{H_D,\left[H_0,|x|^2\right]\right\}.
\end{equation*}
Thus, one can even define the following generalized version of~\eqref{eq:L}, namely
\begin{equation}\label{eq:L2}
L_\phi:=-\tfrac{1}{4}\left\{H_D,\left[H_0,\phi\right]\right\}
=\{H_D,iG_\phi\},
\end{equation}
for a given function $\phi:\R^d\to\R$ and where we have defined
\begin{equation}\label{eq:iGphi}
	iG_\phi:=-\tfrac{1}{4}[H_0,\phi].
\end{equation}

Observe that both $L$ and $L_\phi$ defined above are second-order differential operators. 
\end{definition}

The following identities are immediate consequences of identities~\eqref{eq:algebraic} and~\eqref{eq:positive-commutator}.
\begin{proposition}
	Let $L$ and $L_\phi$ be defined as in~\eqref{eq:L} and~\eqref{eq:L2}, respectively. Consider $H=H_D+ V,$ where $H_D$ is defined as in~\eqref{eq:Dirac}, one has
	\begin{align}
	\label{eq:spectral}
		[H,L] &
		= 2H_0+[V,L],\\
		\label{eq:morawetzid}
		\left[H,L_\phi\right] & 
		=-\tfrac{1}{4}\left[H_0,[H_0,\phi]\right]+\left[V,L_\phi\right].
\end{align}
\end{proposition}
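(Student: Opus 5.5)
The plan is to split $H=H_D+V$ and treat the two pieces of each commutator separately. By bilinearity,
\[
[H,L]=[H_D,L]+[V,L],\qquad [H,L_\phi]=[H_D,L_\phi]+[V,L_\phi].
\]
The terms $[V,L]$ and $[V,L_\phi]$ already appear verbatim on the right-hand sides of \eqref{eq:spectral} and \eqref{eq:morawetzid}. So everything reduces to computing the free parts $[H_D,L]$ and $[H_D,L_\phi]$.

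For these I will use the identity \eqref{eq:mainmain} with $T=H_D$ and $A=iG_\phi$:
\[
[H_D,\{H_D,iG_\phi\}]=[H_D^2,iG_\phi].
\]
The proof of \eqref{eq:mainmain} is a one-line expansion:
\[
T(TA+AT)-(TA+AT)T=T^2A-AT^2,
\]
since the two mixed terms $TAT$ cancel. Next I insert the supersymmetry relation $H_D^2=(H_0+m^2)I$, which follows from the Clifford relations \eqref{eq:commutation-rel-dirac-matr}. Because $m^2 I$ is a scalar multiple of the identity, it commutes with everything, so $[H_D^2,iG_\phi]=[H_0,iG_\phi]$. This is exactly \eqref{eq:algebraic}.

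For \eqref{eq:morawetzid} I then substitute the definition \eqref{eq:iGphi}, $iG_\phi=-\tfrac14[H_0,\phi]$, which gives
\[
[H_D,L_\phi]=-\tfrac14[H_0,[H_0,\phi]].
\]
Here $H_0$ acts diagonally on $\C^N$, so it commutes with the scalar identity matrix and the computation is the scalar one.

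For \eqref{eq:spectral} I take $\phi=|x|^2$. By \eqref{eq:generator-dilations}, $iG=\tfrac{i\cdot i}{4}[H_0,|x|^2]=-\tfrac14[H_0,|x|^2]$, so $L=L_{|x|^2}$. It then remains to invoke the positive-commutator identity \eqref{eq:positive-commutator}, $[H_0,iG]=2H_0$. I will double-check it directly: $[-\Delta,|x|^2]=-2d-4x\cdot\nabla$, so $iG=\tfrac d2+x\cdot\nabla$. Then $[-\Delta,x\cdot\nabla]=-2\Delta=2H_0$, which confirms the identity.

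The only genuine obstacle is rigor rather than algebra. $L$ and $L_\phi$ are second-order differential operators, so the identities should be read either on a common core such as $C_c^\infty(\R^d;\C^N)$ or in the form sense $\langle\psi,[\cdot,\cdot]\psi\rangle$. For sufficiently smooth $\phi$ and $V$, all products are well defined on Schwartz spinors. I would state the proposition as an identity of differential operators on $\mathcal S(\R^d;\C^N)$, consistent with the paper's remark that domain issues are disregarded, and defer the extension to eigenfunctions to the later sections.
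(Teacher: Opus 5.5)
Your proposal is correct and follows the paper's route: split off $[V,L]$ and $[V,L_\phi]$ by bilinearity, apply $[H_D,\{H_D,A\}]=[H_D^2,A]=[H_0,A]$ from \eqref{eq:algebraic}, then use $iG_\phi=-\tfrac14[H_0,\phi]$ for \eqref{eq:morawetzid} and $[H_0,iG]=2H_0$ from \eqref{eq:positive-commutator} for \eqref{eq:spectral}, which the paper simply cites as immediate. Your direct checks of $iG=\tfrac d2+x\cdot\nabla$ and $[-\Delta,x\cdot\nabla]=-2\Delta$ are right, and your domain caveat matches the paper's own disclaimer (though since $V$ and the later choice $\phi=|x|+\phi_{\text{ls}}$ are singular at the origin, a core such as $C_c^\infty(\R^d\setminus\{0\};\C^N)$ is more natural than $\mathcal S$).
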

We emphasize once more that the two identities above are to be understood on the appropriate operator domains, where every term involved is well defined. 


Identities~\eqref{eq:spectral} and~\eqref{eq:morawetzid} form the core of this manuscript:~\eqref{eq:spectral} will provide valuable information on the point spectrum of $H$, while~\eqref{eq:morawetzid} will yield purely real-variable proofs of Morawetz-type and local smoothing estimates for the Dirac evolution equation, without the need to square the full Hamiltonian $H$, but only the free operator $H_D.$

\medskip
Before presenting our main results, we introduce the following assumptions, which will be required throughout the paper. More specifically, hypothesis~\ref{ass:HS} will be needed for the results in the \emph{stationary} setting, while hypothesis~\ref{ass:HE} will be used for the associated \emph{evolution} problem.
\begin{assumption}{(HS)}\label{ass:HS}
Let $H = H_D + V$, where $H_D$ represents the free Dirac operator defined in~\eqref{eq:Dirac} and $V:\R^d \to \C^{N\times N}$ is a Hermitian matrix-valued potential such that $V\in C^\infty(\R^d\setminus\{0\})$ and $V$ is bounded outside any ball containing the origin.
We assume that $H$ is essentially self-adjoint on $C^\infty_c(\R^d;\C^N)$ and self-adjoint on $L^2(\R^d;\C^{N})$ with domain 
$\mathcal D(H)\subseteq H^1(\R^d;\C^{N})$.
\end{assumption}

\begin{assumption}{(HE)}\label{ass:HE}
Let $V:\R^d \to \C^{N\times N}$ be a Hermitian matrix-valued potential. We assume that there exists $a\in[0,1)$ such that for any $\psi\in \D(H)$:
\begin{equation}\label{eq:smallness-potentia}
\|V\psi\|_{L^2}\leq a\|\nabla\psi\|_{L^2}
\quad
a\in[0,1).
\end{equation}
\end{assumption}

\begin{remark}\label{rem:self}
By the Kato--Rellich theorem, Assumption~\ref{ass:HE} is a sufficient but not necessary condition for Assumption~\ref{ass:HS}.
Indeen, Assumption~\ref{ass:HE} also is fulfilled in several classical situations:
If $V\in L^\infty(\R^d;\mathcal \C^{N\times N})$ is Hermitian, then $H=H_D+V$ is self-adjoint on the domain of the free Dirac operator and, in particular, on $H^1(\R^d;\C^{N}).$ This follows from the Kato--Rellich theorem since $V$ is $H_D$–bounded with relative bound $0$ (see, e.g., \cite[Ch.~4]{Thaller92-book}). More generally, the same conclusion holds whenever $V$ is $H_D$–relatively bounded with relative bound strictly less than $1$.

However, conditions based on $H_D$–relative smallness are not optimal in the Dirac setting. This becomes apparent already in the case of the three-dimensional \emph{electrostatic} Coulomb potential, namely $V(x)=-\nu\,|x|^{-1}I_4$.
Indeed, by the Hardy inequality, the potential $V$ is $H_D$ relatively bounded with relative bound $2|\nu|$, and therefore the Dirac–Coulomb operator is self-adjoint on $H^1(\R^3;\C^{4})$ whenever $|\nu|<1/2$. 
However, this range is not optimal: the Dirac–Coulomb operator is self-adjoint on $H^1(\R^3;\C^{4})$ if and only if $|\nu|< \sqrt{3}/2$ (see \cite[Ch.~4]{Thaller92-book}). 
For $\sqrt{3}/2<|\nu|\leq 1$, the operator admits infinitely many self-adjoint extensions, and a canonical (``distinguished'') extension can be selected, for instance, via finite potential energy or gap methods~\cite{DES2023,ELS2021}. 
For $|\nu|> 1$, further pathologies arise and a more delicate analysis is required (see the references above).

Other relevant examples are provided by the \emph{Lorentz-scalar} Coulomb potential $V(x)=-\mu\,|x|^{-1}\beta$ and by the \emph{anomalous magnetic} Coulomb potential $V(x)=\tau(-i\alpha\cdot x \beta)|x|^{-2}$.
In the first case, the operator $H$ is self-adjoint on $H^1(\R^3;\C^4)$ for all $\mu\in\mathbb{R}$, while in the second case self-adjointness holds for $|\tau|<1/2$ (see \cite{CassanoPizzichillo18}).
\end{remark}

\medskip
We are now ready to state the main results of this manuscript. 
The first two theorems concern the stationary eigenvalue problem associated with
\begin{equation}\label{eq:dirac-evolution}
i\partial_t \psi=(H_D+V)\psi
\end{equation}

More specifically, they provide smallness conditions in \emph{critical functional scales} on $V$ under which the operator $H=H_D + V$ does not admit eigenvalues. We treat separately the massless and massive case, as they exhibit substantially different features.

\begin{theorem}[Massless Dirac with potential: absence of point spectrum]\label{thm:spectral-stability}
Let $d\geq3$, $m=0$, and let Assumption~\ref{ass:HS} be satisfied.
Assume that there exist two positive constants $C_1$ and $C_2$ such that, for any $j=1,\dots,d,$ one has
\begin{equation}\label{eq:ifin}
|\nabla V(x)|\leq\frac{C_1}{|x|^2},
\qquad
\left|\left[\alpha_j,V\right]\right|\leq \frac{C_2}{|x|}.
\end{equation}
Then, if $C_1$ and $C_2$ satisfy 
\begin{equation*}
\frac{4C_1}{d-2}+C_2<1,
\end{equation*}
it follows that $\sigma_p\left(H_D+V\right)=\varnothing.$
\end{theorem}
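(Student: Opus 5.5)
The plan is to argue by contradiction via the virial identity \eqref{eq:spectral}. Suppose $\psi\in\D(H)\subseteq H^1(\R^d;\C^N)$, $\psi\neq0$, satisfies $H\psi=E\psi$ for some $E\in\R$. Since $H$ is self-adjoint, formally $\langle\psi,[H,L]\psi\rangle=0$. By \eqref{eq:spectral} this gives
\begin{equation*}
0=2\|\nabla\psi\|_{L^2}^2+\langle\psi,[V,L]\psi\rangle .
\end{equation*}
We will show that $|\langle\psi,[V,L]\psi\rangle|\le 2\bigl(\tfrac{4C_1}{d-2}+C_2\bigr)\|\nabla\psi\|_{L^2}^2$. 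Under the smallness hypothesis this forces $\nabla\psi=0$, hence $\psi=0$ in $L^2$, which is the desired contradiction. The constant in front of $C_1$ is the one to be confirmed by the computation below.

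To estimate the perturbation term, expand with the Jacobi/Leibniz rule:
\begin{equation*}
[V,L]=[V,\{H_D,iG\}]=\{[V,H_D],iG\}+\{H_D,[V,iG]\}.
\end{equation*}
From \eqref{eq:iGphi} with $\phi=|x|^2$ we have $iG=x\cdot\nabla+\tfrac d2$, so $[V,iG]=-x\cdot\nabla V$. With $m=0$,
\begin{equation*}
[V,H_D]=-i\sum_j[V,\alpha_j]\partial_j+i\sum_j\alpha_j(\partial_jV).
\end{equation*}

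We then bound each piece separately.
\begin{itemize}
\item The term $\langle\psi,\{H_D,x\cdot\nabla V\}\psi\rangle=2\Re\langle H_D\psi,(x\cdot\nabla V)\psi\rangle$ is controlled by $2\|\nabla\psi\|\,C_1\||x|^{-1}\psi\|$. The Hardy inequality $\||x|^{-1}\psi\|\le\tfrac{2}{d-2}\|\nabla\psi\|$ (this is where $d\ge3$ enters) then yields a contribution proportional to $\tfrac{C_1}{d-2}\|\nabla\psi\|^2$.
\item The term $\{[V,\alpha_j]\partial_j,iG\}$ is handled by moving $iG$ and the derivative across the inner product. The principal part is $\langle[V,\alpha_j]\partial_j\psi,x\cdot\nabla\psi\rangle$, bounded by $C_2\|\nabla\psi\|^2$ using $|[\alpha_j,V]|\le C_2/|x|$ against the weight $|x|$. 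The lower-order pieces coming from $d/2$ and from the adjoint of $x\cdot\nabla$ must either cancel (they should, since both factors are anti-Hermitian and the anticommutator is Hermitian) or be absorbed via Hardy.
\item The term $\{\alpha_j\partial_jV,iG\}$ again pairs a $C_1/|x|^2$ weight with $x\cdot\nabla\psi$ and $\psi$. Hardy turns it into another $\tfrac{C_1}{d-2}\|\nabla\psi\|^2$ contribution.
\end{itemize}
Collecting these should produce the stated combination $\tfrac{4C_1}{d-2}+C_2$. The bookkeeping of constants is routine but must be done carefully to match the threshold $1$.

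The main obstacle is justifying $\langle\psi,[H,L]\psi\rangle=0$ and the integrations by parts. Here $L$ is second order, $\psi$ is only known to lie in $H^1$, and $V$ is singular at the origin, so $\langle H\psi,L\psi\rangle$ is not a priori meaningful. I would regularize by replacing $iG$ with a truncated, mollified version, e.g.\ $iG_{\phi_R}$ with $\phi_R(x)=R^2\phi(x/R)$ and $\phi$ a smooth cutoff equal to $|x|^2$ near the origin and bounded at infinity, possibly combined with a cutoff away from $0$ using $V\in C^\infty(\R^d\setminus\{0\})$. I would then use the general identity \eqref{eq:morawetzid} for $L_{\phi_R}$, for which all pairings with $\psi$ are finite, and get the same estimates uniformly in $R$. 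Finally, let $R\to\infty$: the error terms (involving $\nabla\phi_R$ on the annulus $|x|\sim R$ and $\Delta^2\phi_R$) tend to $0$ by dominated convergence and $\psi\in H^1$. Local regularity of $\psi$ away from $0$ (elliptic regularity for $H_D$ with smooth $V$) would be used to make the intermediate manipulations rigorous.
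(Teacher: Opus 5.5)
Your route is the paper's: $\langle\psi,[H,L]\psi\rangle=0$, formula \eqref{eq:spectral}, the Leibniz splitting $[V,L]=\{[V,H_D],iG\}+\{H_D,[V,iG]\}$ (the operator form of Lemma~\ref{lemma:potential-rewrite}), then Cauchy--Schwarz and Hardy. Your first bullet is fine and gives exactly $\frac{4C_1}{d-2}\|\nabla\psi\|^2$.

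The gap is in how you handle $\{[V,H_D],iG\}$. You split $[V,H_D]=A+B$, with $A=-i\sum_j[V,\alpha_j]\partial_j$ and $B=i\sum_j\alpha_j(\partial_jV)$, \emph{before} using any symmetry. You then assert that the lower-order terms of $\{A,iG\}$ cancel because both factors are anti-Hermitian. But $A$ is not anti-Hermitian. Integrating by parts gives $A+A^*=M:=i\sum_j[\partial_jV,\alpha_j]$, a Hermitian multiplication operator of size $\lesssim C_1|x|^{-2}$, and correspondingly $B+B^*=-M$. One computes
\begin{gather*}
\langle\psi,\{A,iG\}\psi\rangle=-2\Re\langle A\psi,x\cdot\nabla\psi\rangle+\langle M\psi,x\cdot\nabla\psi\rangle,\\
\langle\psi,\{B,iG\}\psi\rangle=-2\Re\langle B\psi,x\cdot\nabla\psi\rangle-\langle M\psi,x\cdot\nabla\psi\rangle .
\end{gather*}
The $\tfrac d2$-contributions do cancel inside each piece. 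The remainders $\pm\langle M\psi,x\cdot\nabla\psi\rangle$, however, cancel only \emph{between} your second and third bullets. If you estimate them separately, Cauchy--Schwarz plus Hardy is the only available tool, since a further integration by parts produces second derivatives of $V$. That adds extra multiples of $\frac{C_1}{d-2}\|\nabla\psi\|^2$ with dimension-dependent factors. You would then get $\sigma_p=\varnothing$ only under a strictly stronger smallness condition than $\frac{4C_1}{d-2}+C_2<1$.

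The missing step is the one the paper uses: keep $[V,H_D]$ in one piece. It is anti-Hermitian as the commutator of two symmetric operators, so
\[
\langle\psi,\{[V,H_D],iG\}\psi\rangle=2\Re\langle[H_D,V]\psi,iG\psi\rangle=2\Re\langle[H_D,V]\psi,x\cdot\nabla\psi\rangle,
\]
because the $\tfrac d2$ part of $iG$ contributes a purely imaginary quantity. Only then insert \eqref{eq:comm-HDV}, namely $[H_D,V]=-i(\alpha\cdot\nabla V)-i[\alpha_j,V]\partial_j$. The two resulting terms are bounded by $\frac{4C_1}{d-2}\|\nabla\psi\|^2$ and $2C_2\|\nabla\psi\|^2$. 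Together with your first bullet this gives precisely $2\bigl(\frac{4C_1}{d-2}+C_2\bigr)\|\nabla\psi\|^2$, matching the threshold.

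Your regularization sketch is a reasonable counterpart of the approximation argument the paper only cites. Two points need care if you truncate with $\phi_R$:
\begin{itemize}
\item Choose $\phi_R$ with $|\nabla\phi_R(x)|\le 2|x|$, so that the same sharp constants survive.
\item The zeroth-order part $\tfrac14\Delta\phi_R$ of $iG_{\phi_R}$ differs from $\tfrac d2$ where $|x|\gtrsim R$. This leaves an extra term $\Re\langle[H_D,V]\psi,(\tfrac14\Delta\phi_R-\tfrac d2)\psi\rangle$, which tends to $0$ as $R\to\infty$ by \eqref{eq:ifin} and $\psi\in H^1$.
\end{itemize}
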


In the massive case, we have the following generalization of the previous result.

\begin{theorem}[Massive Dirac with potential: absence of point spectrum]\label{thm:spectral-stability2}
Let $d\geq3$, $m>0$, and let Assumption~\ref{ass:HS} be satisfied.
Assume that there exist four positive constants  $C_1,C_2,C_3$ and $C_4$ such that, for any $j=1,\dots,d,$ one has
\begin{equation}\label{eq:ifin2}
|\nabla V(x)|\leq\frac{C_1}{|x|^2},
\qquad
\left|\left[\alpha_j,V\right]\right|\leq \frac{C_2}{|x|},
\qquad
\Big|   (x\cdot \nabla)\big[ \{V,\beta\}\big]\Big|\leq \frac{C_3}{|x|^2}
\qquad
|[\beta,V]|\leq\frac{C_4}{|x|^2}.
\end{equation}
Then, if $C_1, C_2, C_3$ and $C_4$ satisfy 
\begin{equation}\label{eq:tesi1}
\frac{4C_1}{d-2}+C_2+\frac{2mC_4}{d-2} + \frac{2mC_3}{(d-2)^2}<1,
\end{equation}
it follows that $\sigma_p\left(H_D+V\right)=\varnothing.$
\end{theorem}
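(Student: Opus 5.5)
The plan is to use the commutator identity \eqref{eq:spectral} as a virial theorem for eigenfunctions. Suppose $\psi\in\mathcal D(H)$, $\psi\neq0$, satisfies $H\psi=E\psi$ for some $E\in\R$. Since $H$ is self-adjoint, formally $\langle\psi,[H,L]\psi\rangle=\langle H\psi,L\psi\rangle-\langle L\psi,H\psi\rangle=0$. Then \eqref{eq:spectral} gives
\begin{equation*}
2\|\nabla\psi\|_{L^2}^2=-\langle\psi,[V,L]\psi\rangle .
\end{equation*}
The goal is to bound the right-hand side by $2\theta\|\nabla\psi\|_{L^2}^2$ with
\begin{equation*}
\theta:=\frac{4C_1}{d-2}+C_2+\frac{2mC_4}{d-2}+\frac{2mC_3}{(d-2)^2}<1 .
\end{equation*}
This forces $\nabla\psi=0$, hence $\psi=0$ in $L^2$ (because $d\geq3$), which is a contradiction.

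\textbf{Expanding $[V,L]$.} Write $L=H_DiG+iGH_D$, with $iG=x\cdot\nabla+\tfrac d2$. The Jacobi-type expansion is
\begin{equation*}
[V,L]=\{[V,H_D],iG\}+\{H_D,[V,iG]\},
\qquad
[V,iG]=-x\cdot\nabla V .
\end{equation*}
The commutator with the free Dirac operator splits as
\begin{equation*}
[V,H_D]=i\,\alpha\cdot\nabla V-\sum_j[\alpha_j,V]\,p_j+m[V,\beta],
\end{equation*}
where the matrix commutators act pointwise. The terms are then handled as follows.
\begin{itemize}
\item \emph{Gradient terms.} Both $\{H_D,x\cdot\nabla V\}$ and the $\alpha\cdot\nabla V$ term paired with $iG$ contain one derivative and a weight $|x||\nabla V|\le C_1/|x|$. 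Integrating by parts once and using Cauchy--Schwarz with Hardy's inequality $\|\psi/|x|\|\le\frac{2}{d-2}\|\nabla\psi\|$, each contributes $\frac{2C_1}{d-2}\cdot2\|\nabla\psi\|^2$ after the symmetrization. This is the source of $\frac{4C_1}{d-2}$.
\item \emph{The $[\alpha_j,V]p_j$ term.} Paired with $x\cdot\nabla$, it is bounded by $C_2\int|\nabla\psi|^2$ directly, plus lower-order pieces. The lower-order pieces should cancel under the anticommutator, since $\{A,B\}$ with both factors skew or both symmetric reorganizes into a single real expectation.
\item \emph{The mass terms.} When $m>0$, the anticommutation $\{\beta,\alpha_j\}=0$ makes the $x\cdot\nabla$ derivative fall on the symmetric part $\{V,\beta\}$, producing $(x\cdot\nabla)\{V,\beta\}$. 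This is controlled by $C_3/|x|^2$ and two Hardy inequalities, giving $\frac{2mC_3}{(d-2)^2}$ up to the constant bookkeeping. The antisymmetric part $[V,\beta]$ paired with $x\cdot\nabla$ gives $m\,\frac{C_4}{|x|}\,|\psi|\,|\nabla\psi|$, hence $\frac{2mC_4}{d-2}$ by one Hardy inequality.
\end{itemize}

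\textbf{Rigour: the main obstacle.} I expect the real difficulty to be justifying $\langle\psi,[H,L]\psi\rangle=0$ and the integrations by parts. The operator $L$ is second order and unbounded in $x$, while $\psi$ is only known to lie in $H^1$ and $V$ may be singular at the origin. I would proceed in three steps.
\begin{enumerate}
\item Use interior elliptic regularity: $H_D^2=-\Delta+m^2$ and $V\in C^\infty(\R^d\setminus\{0\})$, so $\psi\in C^\infty(\R^d\setminus\{0\})$.
\item Replace $|x|^2$ by a truncated weight $\phi_R$ equal to $|x|^2$ on $\{|x|<R\}$ and with bounded derivatives. At the same time, cut off near the origin with $\chi_\varepsilon$ (Hardy and $\psi\in H^1$ control the boundary terms near $0$).
\item Apply the generalized identity \eqref{eq:morawetzid} with $L_{\phi_R}$ to $\chi_\varepsilon\psi$. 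The error terms are supported in annuli and are bounded by $\int_{R<|x|<2R}\big(|\nabla\psi|^2+|\psi|^2/|x|^2\big)\to0$, and similarly as $\varepsilon\to0$.
\end{enumerate}
The delicate point is that $[H,L_{\phi_R}]$ produces terms with weights $\nabla\phi_R$ that do not decay. For these I would use the eigen-equation to rewrite $H_D\psi=(E-V)\psi$, which trades a derivative for bounded multipliers outside a ball. Then I would show that the expectation of $[H,L_{\phi_R}]$ tends to $2\|\nabla\psi\|^2+\langle[V,L]\rangle$ as $R\to\infty$.
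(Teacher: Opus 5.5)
Your proposal is correct and follows essentially the paper's proof: the same virial identity $\langle\psi,[H,L]\psi\rangle=0$ combined with $[H,L]=2H_0+[V,L]$, the same splitting of $\tfrac12\langle\psi,[V,L]\psi\rangle$ into $\Re\langle[H_D,V]\psi,iG\psi\rangle-\Re\langle[iG,V]\psi,H_D\psi\rangle$ (the $\tfrac d2$ part of $iG$ drops because $[H_D,V]$ is skew), and the same five Cauchy--Schwarz/Hardy bounds, which give exactly the constant in \eqref{eq:tesi1}; your cutoff-and-truncation scheme is a more explicit version of the regularization that the paper only outlines and delegates to a reference. There are two cosmetic slips. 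First, since $L$ is skew one has $\langle\psi,[H,L]\psi\rangle=\langle H\psi,L\psi\rangle+\langle L\psi,H\psi\rangle=2\Re\langle H\psi,L\psi\rangle$, with a plus rather than a minus. Second, the $(x\cdot\nabla)\{V,\beta\}$ term comes directly from $m\{\beta,(x\cdot\nabla)V\}$ because $\beta$ is a constant Hermitian matrix, not from $\{\beta,\alpha_j\}=0$, and it needs only one Hardy inequality.
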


\begin{remark}[Critical functional scales]\label{rem:ass2}
The smallness assumptions in Theorems~\ref{thm:spectral-stability} and~\ref{thm:spectral-stability2}
reflect the critical functional scales for the Dirac operator.
For \emph{homogeneous} potentials, the conditions in~\eqref{eq:ifin} in Theorem~\ref{thm:spectral-stability} (\emph{massless} case) single out the Coulomb scale
$V(x)\sim\frac{c}{|x|},$
which is critical for the massless Dirac operator.
In dimension $d=3$, for electrostatic Coulomb-type potentials, namely $V(x)=(\tfrac{a}{|x|} + b)I$ one has $[\alpha_j,V]=0$ for all $j$, so that~\eqref{eq:ifin} reduces to
the gradient bound $|\nabla V|\le \frac{a}{|x|^2}$ and~\eqref{eq:tesi1} is satisfied whenever $a<\tfrac14$ (for general dimensions $d\geq 3,$ condition~\eqref{eq:tesi1} provides a clear dimension-dependent smallness condition on the coupling constant). 
Thus, Theorem~\ref{thm:spectral-stability} recovers the fact that small Coulomb couplings
produce no point spectrum in the massless case. We stress that, when $[\alpha_j,V]=0$ (\emph{e.g.} when the potential is electrostatic),  no additional decay of $V$ at infinity is required. 

We emphasize that our result does not contradict the explicit zero-energy eigenfunctions constructed by Loss and Yau in~\cite{LossYau1986} (see also~\cite{BalinskyEvans2001}) for the Weyl-Dirac operator with certain potentials decaying like $\langle x\rangle^{-2}.$ Indeed, in their example the quantity $\sup |x|^2|\nabla V(x)|$ is of order one (not small as required here). This shows that when the constants in~\eqref{eq:ifin} are large, eigenvalues (in particular, zero modes) \emph{do} occur. By contrast, Theorem~\ref{thm:spectral-stability} asserts that if the smallness condition~\eqref{eq:tesi1} is satisfied, no point spectrum can appear.
The theorem therefore delineates a sharp qualitative dichotomy between the \emph{small Coulomb regime} (no eigenvalues) and the \emph{large Coulomb regime} (possible zero modes).

\medskip
For the \emph{massive} Dirac operator, where scaling invariance is lost, two distinct cases must be considered. 

In the more general case where $V$ does not anticommute with $\beta$, two distinct critical regimes  arise in the assumptions~\eqref{eq:ifin2}: the critical behavior at infinity is governed by the
inverse-square scale, while the critical behavior at the origin is given by the Coulomb potential, namely
$V(x)\sim\frac{c}{|x|+|x|^2}.$
In this case, the absence of eigenvalues cannot be expected for Coulomb-type potentials: one always expects (and in fact knows from Dirac’s computation~\cite{D}) that eigenvalues occur inside the spectral gap $(-m,m)$, corresponding to the discrete levels of the relativistic hydrogen atom.
The need to account simultaneously for both Coulomb and inverse-square scales has been
emphasized in several works, including Cuenin, Laptev and Tretter~\cite{CueninLaptevTretter14},
D'Ancona and Fanelli~\cite{DAnconaFanelli08}, and Fanelli and Krej\v{c}i\v{r}\'ik~\cite{FanelliKrejcirik19},
among others. These papers develop limiting absorption principles, dispersive estimates, and
spectral stability criteria that all highlight the special role of the Coulomb and
inverse-square thresholds.

Nevertheless, in the case where $V$ anticommutes with the Dirac matrix $\beta$, the Coulomb scale becomes admissible, despite the obstruction described above.
A physically relevant example is provided by \emph{anomalous magnetic potentials}, defined by $V=-i\beta\alpha\cdot\nabla \varphi$, where $\varphi:\mathbb{R}^d\to \mathbb{C}$ is a suitable scalar function.
In the $3$-dimensional case, for $\varphi=\tau \log|x|$ with $\tau\in\mathbb{R}$, one recovers the Coulomb anomalous magnetic potential $V=-i\tau\beta\alpha\cdot \frac{x}{|x|^2}$ that clearly verifies~\eqref{eq:ifin2}. Hence, Theorem~\ref{thm:spectral-stability2} ensures the absence of eigenvalues whenever the smallness condition~\eqref{eq:tesi1} is fulfilled.
This is consistent with the results of \cite[Remark 1.12]{CassanoPizzichillo18}, where eigenvalues and eigenfunctions are computed explicitly. 
Indeed, a careful inspection of this result reveals  that the corresponding eigenfunctions are not $L^2$-integrable whenever $|\tau|<1/2$, which is precisely the regime corresponding to Assumption~\ref{ass:HS}.
\end{remark}

\smallskip
\begin{remark}[Comparison with existing results]
Standard virial methods for Dirac operators read as follows (see~\cite[Thm.~4.1.2]{Albeverio72} and~\cite[Cor.~4.22]{Thaller92-book}): let $G$ be the generator of dilations defined in~\eqref{eq:generator-dilations}, for any solution $\psi$ of the eigenvalue equation $H\psi=\lambda \psi,$ with $H=H_D + V,$ one has
\begin{equation*}
	0=\ip{\psi}{[H,iG]\psi}
=\ip{\psi}{-i\alpha \cdot \nabla \psi}-\ip{\psi}{(x\cdot \nabla V)\psi}
=\lambda-m \ip{\psi}{\beta\psi}-\ip{\psi}{(V+ x\cdot\nabla V)\psi},
\end{equation*}
where, in the second identity, we have used~\eqref{eq:virial-rel}. Thus
\begin{equation}\label{eq:virial-identity-full}
	\lambda-m \ip{\psi}{\beta\psi}=\ip{\psi}{(V+ x\cdot\nabla V)\psi}.
\end{equation}
 The potential term on the right-hand side can be controlled, via the Hardy inequality, provided that $|V+x\cdot\nabla V|\lesssim |x|^{-1}$, and it is small when the corresponding coupling constant is small.
Therefore, standard virial arguments require \emph{both} $V$ and $\nabla V$ to be small at the Coulomb scale.
Moreover, they typically yield absence of eigenvalues only in restricted spectral regions. 
For instance, it easily follows from~\eqref{eq:virial-identity-full} that if $V(x)+x\cdot\nabla V$ has only non-positive eigenvalues, then $\lambda\leq m.$ Similarly, positivity of the matrix $V(x)+x\cdot\nabla V$ implies $\lambda\geq -m.$

Not relying exclusively on virial-type arguments, the question of whether perturbed Dirac
operators admit eigenvalues (in particular, embedded eigenvalues) has been extensively
investigated. In~\cite{Roze1970}, Roze proved that the Dirac operator $\alpha\cdot(p+A(x)) + \beta + q(x)I$
has no eigenvalues embedded in the essential spectrum for potentials decaying faster than
the Coulomb scale at infinity, namely under the assumption
\begin{equation}\label{eq:condA}
\lim_{|x|\to\infty}\bigl(|A(x)|+|q(x)|\bigr)=0.
\end{equation}
Later, Kalf~\cite{Kalf_1981} replaced the physically inadequate condition~\eqref{eq:condA} by
the gauge-invariant requirement
\begin{equation}\label{eq:condB}
\lim_{|x|\to\infty}\bigl(|B(x)|+|q(x)|\bigr)=0, 
\qquad B=\curl A.
\end{equation}
In the same paper, he also replaced condition~\eqref{eq:condB} (which excludes Coulomb-type
potentials) by an alternative assumption that allows Coulomb potentials, at the price of
imposing a sign condition on the radial derivative of $q$ (see conditions (iii) and (iv) in
\cite[Thm.~1]{Kalf_1981}).  

Vogelsang~\cite{Vogelsang1987} improved this result by requiring only some decay of the negative
part of the radial derivative of the potential (note that no magnetic field is involved in
this setting). A similar result to those of~\cite{Kalf_1981} and~\cite{Vogelsang1987} was
proved by Berthier and Georgescu in~\cite{BerthierGeorgescu1987}, extending beyond the
purely electrostatic case. Their potential $V=L+W$ is decomposed into a long-range part $L$
and a short-range part $W$: the long-range component $L$ satisfies local boundedness and a
sign condition on its radial derivative and, in particular, includes the Coulomb potential
(see \cite[(38), (39)]{BerthierGeorgescu1987}), while the short-range part $W$ can be singular
and decays at infinity as a short-range perturbation (see \cite[(40), (57)]{BerthierGeorgescu1987}).  

For further results in this direction, see
\cite{BoussaidComech2016,GeorgescuMantoiu2001,ErdoganGoldbergGreen19} and the references
therein.  Other recent results in a purely magnetic context can be found in the recent papers~\cite{HundertmarkKovarik2024,CossettiFanelliKrejcirik20}.

In~\cite{ArrizabalagaCossettiMorales2025}, the authors obtained results that are very close to our Theorems~\ref{thm:spectral-stability} and~\ref{thm:spectral-stability2} above. The strategy consists in squaring the full Hamiltonian $H$ and then handling the \emph{first-order} terms the arise from this procedure. As a consequence, their approach requires stronger assumptions on the perturbation than those in Theorems~\ref{thm:spectral-stability} and~\ref{thm:spectral-stability2}. This is due to the fact that in our proof we need the squaring of the free part of the operator only.  This difference between the results is already  apparent in the massless case for the prototypical example of electrostatic Coulomb potentials, namely $V(x)=-\nu |x|^{-1} I_N.$ Indeed, Theorem 1.4 in~\cite{ArrizabalagaCossettiMorales2025} establishes absence of point spectrum under the condition 
	$\frac{4\nu}{(d-2)} + \frac{4\nu^2}{(d-2)^2}<1,$
whereas our Theorem~\ref{thm:spectral-stability} only requires
	$\frac{4\nu}{(d-2)}<1.$
\end{remark}

\smallskip
\begin{remark} [Relation to non-selfadjoint results]
For non-self-adjoint perturbations of $H_D$, sharp \emph{localization} bounds for eigenvalues have been obtained under integrability assumptions 
(see, e.g.,~\cite{CueninLaptevTretter14, FanelliKrejcirik19}). 
Our theorems address the \emph{self-adjoint} case and provide absence of point spectrum under scale-critical, matrix-commutator conditions; 
in this sense they are complementary and highlight the role of genuine first-order Dirac structure via the operator $L=\{H_D,iG\}$. 
We strongly believe that our method should also apply to the non-self-adjoint setting, 
a question we plan to address in future work.
\end{remark}

\medskip
The new relativistic virial identities also yield significant information on the evolution
equation~\eqref{eq:dirac-evolution}. More precisely, this technique allows us to establish a
local smoothing effect under suitable assumptions on the potentials. We present two
distinct results: one in the physical dimension $d=3,$ and another for higher dimensions $d\geq 4.$
Moreover, as in the stationary setting, we treat separately the massless and the massive cases.
 
\begin{theorem}[Massless, 3D, Dirac equation]
\label{thm:3-dim-massless}
 Let $d=3,$ $m=0,$ and let Assumption~\ref{ass:HE} be satisfied.  Assume that there exist $\epsilon\in (0,1)$ and two positive constants $C_1$ and $C_2$ such that for any $j=1,2,3,$ 
 \begin{equation}\label{eq:hyp-higher-mzero-3d}
 |\nabla V|\leq \frac{C_1}{|x|^{2-\epsilon}+ |x|^{2+\epsilon}},
 \qquad
 |[\alpha_j,V]|\leq \frac{C_2}{|x|^{1-\epsilon}+ |x|^{1+\epsilon}},
 \end{equation}
 and such that
 \begin{equation*}
 	(c_{\epsilon/2})^2\left[\tfrac{3\sqrt{2}}{4} C_1  + \tfrac{3\sqrt{2}+ 12}{8} C_2 \right] <\tfrac{1}{6},
 	\qquad 
 	(c_{\epsilon/2})^2 \left[ \tfrac{3\sqrt{2}}{4} C_1 + \tfrac{3\sqrt{2}}{8} C_2 \right]<\tfrac{1}{8},
 \end{equation*}
 with $c_{\epsilon/2}$ as defined in~\eqref{eq:ceps}. 
 Then for any solution $\psi$ to~\eqref{eq:dirac-evolution} the following estimate holds 
	\begin{multline}\label{eq:3d-final-mzero}
 	 \delta  \sup_{R>0} \frac{1}{R} \int_0^\infty \int_{|x|\leq R} |\nabla \psi|^2\, dx\, dt
 	 + \frac{1}{2}\int_0^\infty\int_{\R^3} \frac{|\partial_\tau \psi|^2}{|x|}\, dx\, dt
 	 + \delta \sup_{R>0} \frac{1}{R^2} \int_0^\infty\int_{|x|=R} |\psi|^2\, d\sigma \, dt\\
 	 \lesssim_d \| (H_D+V)\psi(0,\cdot) \|_{L^2}^2, 
\end{multline}  
for some $\delta>0.$
 \end{theorem}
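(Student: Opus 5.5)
We run a Morawetz argument based on identity~\eqref{eq:morawetzid}. Take a radial weight $\phi$ and set $\Theta(t):=\langle \psi(t), L_\phi\psi(t)\rangle$ for a solution $\psi$ of~\eqref{eq:dirac-evolution}. By~\eqref{eq:heisenberg},
\begin{equation*}
\dot\Theta(t)=\langle \psi, i[H,L_\phi]\psi\rangle=-\tfrac{i}{4}\langle\psi,[H_0,[H_0,\phi]]\psi\rangle+\langle\psi,i[V,L_\phi]\psi\rangle .
\end{equation*}
Integrating in $t\in[0,T]$ gives $\int_0^T\dot\Theta\,dt=\Theta(T)-\Theta(0)$. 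The proof then has three parts: positivity of the free double commutator, perturbative control of the $V$-term, and a uniform bound on $\Theta$.

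\textbf{Step 1 (free term).} The double commutator $-\tfrac14[H_0,[H_0,\phi]]$ is the Schr\"odinger Morawetz multiplier. Integrating by parts componentwise gives
\begin{equation*}
\int \nabla\bar\psi\, D^2\phi\, \nabla\psi-\tfrac14\int\Delta^2\phi\,|\psi|^2 ,
\end{equation*}
up to normalization. For the choice $\phi=|x|$ in $d=3$ we have $D^2\phi=|x|^{-1}(I-\hat x\otimes\hat x)$, which yields the angular term $\int|\partial_\tau\psi|^2/|x|$, and $-\Delta^2|x|=8\pi\delta_0$. To obtain the $\sup_R$ quantities we use the standard Perthame--Vega family $\phi_R$: $\phi_R(x)=\tfrac{|x|^2}{2R}$ for $|x|\le R$, radial and concave-linear outside. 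Summing with $\phi=|x|$ gives the positive quantity $\delta R^{-1}\int_{|x|\le R}|\nabla\psi|^2+\delta R^{-2}\int_{|x|=R}|\psi|^2$ together with the tangential term. Since $H_0$ acts diagonally on spinors, this is a verbatim transcription of the scalar computation.

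\textbf{Step 2 (potential term).} We expand $[V,L_\phi]=[V,\{H_D,iG_\phi\}]$. Write $iG_\phi=-\tfrac14(\Delta\phi+2\nabla\phi\cdot\nabla)$ and $H_D=-i\alpha\cdot\nabla$. The commutator $[V,-i\alpha_j\partial_j]$ produces $i\alpha_j\partial_jV$ plus $-i[V,\alpha_j]\partial_j$. The commutator with $\nabla\phi\cdot\nabla$ produces $\nabla\phi\cdot\nabla V$. Every resulting term is bilinear of the form $\int |\partial V|\,|\nabla\phi|\,|\nabla\psi|\,|\psi|$, $\int|[\alpha,V]|\,|D^2\phi|\,|\nabla\psi|^2$-type, or $\int |[\alpha,V]||\nabla\phi||\nabla\psi||\nabla^2\psi|$-type. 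The last, second-order type is the delicate one: after integrating by parts, we rewrite $\nabla\psi$ through $H_D\psi$ using $|H_D\psi|=|\nabla\psi|$ in $L^2$ for $m=0$.

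These terms are bounded using the hypotheses~\eqref{eq:hyp-higher-mzero-3d} together with the weighted estimate
\begin{equation*}
\int\frac{|f|^2}{|x|^{1-\epsilon}+|x|^{1+\epsilon}}\le c_{\epsilon/2}^2\sup_R\frac1R\int_{|x|\le R}|\nabla f|^2
\end{equation*}
(and its analogue for $|f|^2/|x|^{3\pm\epsilon}$ against the $|\psi|^2$ trace terms), applied to $f=\psi$ and $f=\nabla\psi$. The weight $|x|^{\pm\epsilon}$ makes the dyadic sum converge. The numerical constants $\tfrac{3\sqrt2}{4},\tfrac{3\sqrt2+12}{8},\tfrac{3\sqrt2}{8}$ arise from $|\nabla\phi|\le1$, $|D^2\phi|\lesssim 1/R$ and Cauchy--Schwarz. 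The two smallness conditions (with thresholds $1/6$ and $1/8$) are exactly what allows the gradient part and the trace part, respectively, to be absorbed into the left-hand side from Step 1.

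\textbf{Step 3 (boundary term).} $L_\phi$ is second order, so we need $|\Theta(t)|\lesssim\|\nabla\psi(t)\|_{L^2}^2$. Note $L_\phi=\{H_D,iG_\phi\}$ with $\nabla\phi$ bounded and $|x||D^2\phi|$ bounded, so $\langle\psi,L_\phi\psi\rangle=2\Re\langle H_D\psi,iG_\phi\psi\rangle\lesssim\|H_D\psi\|\,(\|\nabla\psi\|+\||x|^{-1}\psi\|)\lesssim\|\nabla\psi\|^2$ by Hardy. Assumption~\ref{ass:HE} gives $\|\nabla\psi\|=\|H_D\psi\|\le(1-a)^{-1}\|H\psi\|$, and $\|H\psi(t)\|$ is conserved. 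Letting $T\to\infty$ yields~\eqref{eq:3d-final-mzero}.

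The main obstacle is Step 2. The term with $[\alpha_j,V]$ carries a full derivative on both sides after commuting with the second-order $L_\phi$, and it must be absorbed using only $\sup_R R^{-1}\int_{B_R}|\nabla\psi|^2$. Achieving this with the explicit constants requires careful integration by parts and the $\epsilon$-weighted Hardy-type lemma.
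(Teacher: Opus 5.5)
Your architecture matches the paper's. It uses the virial identity built from $L_\phi=\{H_D,iG_\phi\}$ and \eqref{eq:morawetzid}, the multiplier $|x|+\phi_{\text{ls}}$, and positivity of the free double commutator (indeed just the scalar Morawetz computation). The boundary term is controlled by Hardy, conservation of $\|H\psi\|_{L^2}$ and Assumption~\ref{ass:HE}. However, Step~2, which you single out as the crux, is not correct as written.

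First, your displayed weighted inequality has the wrong right-hand side. It should be $2c_{\epsilon/2}^2\sup_R R^{-1}\int_{|x|\le R}|f|^2$, the Morrey--Campanato norm of $f$ itself; the factor $2$ comes from $|x|^{1-\epsilon}+|x|^{1+\epsilon}\ge\tfrac12(|x|^{1/2-\epsilon/2}+|x|^{1/2+\epsilon/2})^2$. With $|\nabla f|^2$ on the right the inequality is false in $d=3$: the right side is dilation invariant while the left side is not. Applying it to $f=\nabla\psi$ would also require $\nabla^2\psi$, which you never control.

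Second, the term you call second order cannot be handled by ``rewriting $\nabla\psi$ through $H_D\psi$''. The identity $\|H_D\psi\|=\|\nabla\psi\|$ is global, so it gives nothing inside integrals weighted by $\phi'$ or measured against $\vertiii{\nabla\psi}$. Only the pointwise bound $|\alpha\cdot\nabla\psi|\lesssim|\nabla\psi|$ is ever needed.

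The missing idea is the rearrangement of Lemma~\ref{lemma:potential-rewrite}, valid for $L_\phi$ by Remark~\ref{rem:Vphi-alternative}. It uses only that $V,H_D$ are symmetric and $iG_\phi$ is skew:
\[
\tfrac12\langle\psi,[V,L_\phi]\psi\rangle=\Re\langle[H_D,V]\psi,iG_\phi\psi\rangle-\Re\langle[iG_\phi,V]\psi,H_D\psi\rangle .
\]
Here $[H_D,V]$ and $iG_\phi$ are first order, and $[iG_\phi,V]=\tfrac12\phi'\partial_rV$ is a multiplication operator, so no second derivative of $\psi$ ever appears. This is exactly what you get by moving, in each of the four terms of $[V,\{H_D,iG_\phi\}]$, one first-order factor onto the other slot of the pairing. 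So your integration by parts is the right move, and once it is done there is nothing second order left to absorb.

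Every term is then pointwise bounded by $|\nabla V||\psi||\nabla\psi|$, $|[\alpha_j,V]||\nabla\psi||\psi|/|x|$ or $|[\alpha_j,V]||\nabla\psi|^2$. The coefficients come from $|\phi'|\le\tfrac32$ and $|\phi''+\tfrac2r\phi'|\le\tfrac3r$; your bounds $|\nabla\phi|\le1$ and $|D^2\phi|\lesssim1/R$ are false for $\phi=|x|+\phi_{\text{ls}}$. In $d=3$ one then splits
\[
\frac{1}{|x|^{2-\epsilon}+|x|^{2+\epsilon}}\le\frac{2}{\bigl(|x|^{1/2-\epsilon/2}+|x|^{1/2+\epsilon/2}\bigr)\bigl(|x|^{3/2-\epsilon/2}+|x|^{3/2+\epsilon/2}\bigr)}
\]
and applies \eqref{eq:weighted-to-Morrey} to the $\nabla\psi$ factor and \eqref{eq:weighted-to-spherical} to the $\psi$ factor. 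The spherical norm is essential because the $\int|\psi|^2/|x|^3$ term in $K$ vanishes for $d=3$. This is what produces the constants $\tfrac{3\sqrt2}{4}$, $\tfrac{3\sqrt2+12}{8}$ and $\tfrac{3\sqrt2}{8}$, measured against the thresholds $\tfrac16=\tfrac{d-1}{4d}$ and $\tfrac18=\tfrac{d-1}{16}$.

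A minor correction to Step~3: $\langle\psi,L_\phi\psi\rangle=2i\Im\langle H_D\psi,iG_\phi\psi\rangle$, not $2\Re$.
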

 
 The corresponding  result in the massive case reads as follows:
 \begin{theorem}[Massive, 3D, Dirac equation]
\label{thm:3-dim-massive}
 Let $d=3,$ $m>0,$ and let Assumption~\ref{ass:HE} be satisfied.  Assume that there exist $\epsilon\in (0,1)$ and four positive constants $C_1, C_2, C_3$ and $C_4$ such that for any $j=1,2,3,$ 
 \begin{equation}\label{eq:hyp-higher-m-3d} 
 \begin{gathered}
 	 |\nabla V|\leq \frac{C_1}{|x|^{2-\epsilon}+ |x|^{2+\epsilon}},
 \qquad
 |[\alpha_j,V]|\leq \frac{C_2}{|x|^{1-\epsilon}+ |x|^{1+\epsilon}},
 \\
  |(x\cdot \nabla )\{V,\beta\}|\leq \frac{C_3}{|x|^{2-\epsilon}+ |x|^{2+\epsilon}},
 \qquad 
|[\beta,V]|\leq \frac{C_4}{|x|^{2-\epsilon}+ |x|^{2+\epsilon}},
\end{gathered}
 \end{equation}
  and such that
 \begin{equation*}
 	(c_{\epsilon/2})^2 \left[\tfrac{3\sqrt{2}}{4} C_1 + \tfrac{3\sqrt{2}+ 12}{8} C_2 + \tfrac{3\sqrt{2}m}{8} C_4\right] <\tfrac{1}{6},
 	\qquad 
 	(c_{\epsilon/2})^2 \left[\tfrac{3\sqrt{2}}{4} C_1 
 	+ \tfrac{3\sqrt{2}}{8} C_2 
 	+\tfrac{3m}{8} C_3
 	+\tfrac{3\sqrt{2}m}{8} C_4
 	\right]<\tfrac{1}{8},
 \end{equation*}
 with $c_{\epsilon/2}$ as defined in~\eqref{eq:ceps}. 
	 Then for any solution $\psi$ to~\eqref{eq:dirac-evolution} the bound in~\eqref{eq:3d-final-mzero} holds with a smaller $\delta.$
\end{theorem}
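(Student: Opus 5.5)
The proof of Theorem~\ref{thm:3-dim-massive} will follow the massless argument of Theorem~\ref{thm:3-dim-massless}, built on the Morawetz identity~\eqref{eq:morawetzid}. Let $\psi$ solve~\eqref{eq:dirac-evolution}. Then $\frac{d}{dt}\langle L_\phi\rangle_{\psi(t)}=\langle i[H,L_\phi]\rangle_{\psi(t)}$. Integrating in $t\in(0,T)$ gives
\begin{equation*}
\int_0^T\Big\langle -\tfrac{i}{4}[H_0,[H_0,\phi]]\Big\rangle_\psi dt
= \Big[\langle L_\phi\rangle_\psi\Big]_0^T-\int_0^T\langle i[V,L_\phi]\rangle_\psi\,dt .
\end{equation*}
I will use the Morawetz weight $\phi=|x|$ in $d=3$, or a family $\phi_R$ that is radial, convex, and behaves like $|x|^2/R$ for $|x|\le R$ and like $|x|$ outside. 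For this weight, the double commutator term $\langle -\frac{i}{4}[H_0,[H_0,\phi]]\rangle$ equals $\int \nabla\bar\psi\, D^2\phi\,\nabla\psi-\frac14\int\Delta^2\phi|\psi|^2$, up to normalizing constants. This quantity yields exactly the three positive terms on the left of~\eqref{eq:3d-final-mzero}: the angular derivative $|\partial_\tau\psi|^2/|x|$, the localized gradient $R^{-1}\int_{|x|\le R}|\nabla\psi|^2$, and the trace term $R^{-2}\int_{|x|=R}|\psi|^2$, the last one coming from $-\Delta^2\phi\ge0$ as a surface measure. The free part is unaffected by $m>0$, since $H_D^2=H_0+m^2$ and the mass constant commutes with everything, so this step is identical to the massless case.

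The boundary term satisfies $|\langle L_\phi\rangle_\psi|\le 2\|H_D\psi\|\,\|\nabla\phi\cdot\nabla\psi+\tfrac12\Delta\phi\,\psi\|$, using $L_\phi=\{H_D,iG_\phi\}$ with $iG_\phi$ first order and $|\nabla\phi|\le C$. It is therefore controlled by $\|H_D\psi\|\,\|\nabla\psi\|$, with a Hardy correction for $\Delta\phi\sim 1/|x|$. In the massive case, $\|H_D\psi\|^2=\|\nabla\psi\|^2+m^2\|\psi\|^2$. Assumption~\ref{ass:HE} gives $\|\nabla\psi\|\le(1-a)^{-1}\|H\psi\|$ via $\|H_D\psi\|\ge\|\nabla\psi\|$. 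Since $H\psi(t)$ has conserved norm, these are bounded by $\|H\psi(0)\|^2$ plus $m^2\|\psi\|^2$, and the latter is absorbed since $m\|\psi\|\le\|H_D\psi\|$.

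The main work is the potential term $\langle i[V,L_\phi]\rangle$. I would expand $[V,\{H_D,iG_\phi\}]=\{[V,H_D],iG_\phi\}+\{H_D,[V,iG_\phi]\}$ and treat each piece separately.

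\emph{The $[V,iG_\phi]$ piece.} Since $iG_\phi=\nabla\phi\cdot\nabla+\frac12\Delta\phi$, this commutator equals $-\nabla\phi\cdot\nabla V$, a multiplication operator. Its anticommutator with $H_D$ produces terms in $\nabla V$, bounded by $C_1$, together with commutators $[\alpha_j,V]$, bounded by $C_2$.

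\emph{The $[V,H_D]$ piece.} This equals $-i[V,\alpha_j]\partial_j+i(\partial_j V)\alpha_j$-type terms plus $m[V,\beta]$. The mass contributes the new terms: $m[V,\beta]$, bounded via $C_4$, and, after symmetrizing $m\{\beta,iG_\phi\}$ against $V$, a term $m\,\nabla\phi\cdot\nabla\{V,\beta\}$, bounded via $C_3$ (using $|\nabla\phi|\le 1$ and the radial structure so that only $(x\cdot\nabla)$ appears).

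Each resulting term is estimated by Cauchy--Schwarz with the weighted Hardy/Morawetz inequality underlying the constant $c_{\epsilon/2}$ in~\eqref{eq:ceps}, of the form $\int\frac{|\psi|^2}{|x|^{3-\epsilon}+|x|^{3+\epsilon}}\le c_{\epsilon/2}^2\sup_R\frac{1}{R}\int_{|x|\le R}|\nabla\psi|^2$ and its analogues. The decay rates in~\eqref{eq:hyp-higher-m-3d} are designed so that every weight is summable in dyadic shells. The two numeric smallness conditions, with the added $C_3,C_4$ contributions, should then allow absorption of these terms into the gradient and trace pieces respectively, at the cost of a smaller $\delta$.

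The main obstacle will be the careful bookkeeping of the $m\beta$ terms. In particular, one must verify that the $\beta$-part of $H_D$ only generates $[\beta,V]$ and $(x\cdot\nabla)\{V,\beta\}$, with no uncontrolled zeroth-order $m^2|\psi|^2$ term that lacks decay. I would first check this algebraically using $\{\alpha_j,\beta\}=0$, before doing any estimates.
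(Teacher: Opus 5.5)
Your proposal follows the paper's proof: identity \eqref{eq:gen-id} (your expansion $[V,\{H_D,iG_\phi\}]=\{[V,H_D],iG_\phi\}+\{H_D,[V,iG_\phi]\}$ is equivalent to Lemma~\ref{lemma:potential-rewrite}), the kinetic lower bound \eqref{eq:K-lower-bound}, the boundary term via $\|H_D\psi(t)\|\le(1-a)^{-1}\|H\psi(0)\|$, and the same list of potential terms.

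Two small corrections. First, you need the \emph{sum} $\phi=|x|+\phi_{\text{ls}}$, not one or the other. The weight $|x|$ alone yields neither the local gradient term nor the trace term. The weight $\phi_{\text{ls}}$ alone yields the angular term only with weight of order $1/R$ inside $B_R$. Second, in your bookkeeping the $C_3$ term comes from $\{m\beta,[V,iG_\phi]\}$, which is a multiple of $m\,\phi'\,\partial_r\{V,\beta\}$. By contrast, $\{m[V,\beta],iG_\phi\}$ only gives the $C_4$ term, because its $\Delta\phi$ part has zero expectation ($[\beta,V]$ is anti-symmetric). Also, for $\{H_D,[V,iG_\phi]\}$ no commutator $[\alpha_j,V]$ is needed: Cauchy--Schwarz with $|\phi'\partial_r V|\lesssim|\nabla V|$ suffices. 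The $[\alpha_j,V]$ terms come from $\{[V,H_D],iG_\phi\}$.

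The genuine problem is the one estimate you write out for the zeroth-order terms,
\[
\int_{\R^3}\frac{|\psi|^2}{|x|^{3-\epsilon}+|x|^{3+\epsilon}}\,dx\le c_{\epsilon/2}^2\,\vertiii{\nabla\psi}^2 ,
\]
which is false in $d=3$. Take $\psi=1$ on $B_\lambda$ with $\lambda\ge 1$, $\psi=1-\eta\log(|x|/\lambda)$ for $\lambda\le|x|\le\lambda e^{1/\eta}$, and $\psi=0$ beyond. Then $\vertiii{\nabla\psi}^2\le 4\pi\eta^2$, while the left-hand side is at least $\int_{B_1}\frac{dx}{2|x|^{3-\epsilon}}=2\pi/\epsilon$; let $\eta\to 0$. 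This reflects the degeneracy of the weighted Hardy inequality $\int|\psi|^2|x|^{-3}\lesssim\int|\nabla\psi|^2|x|^{-1}$ at $d=3$. It is also why \eqref{eq:K-lower-bound} has no $\int|\psi|^2/|x|^3$ term there.

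The paper instead splits each cross term by Cauchy--Schwarz using
\[
\frac{1}{|x|^{2-\epsilon}+|x|^{2+\epsilon}}\le\frac{2}{\bigl(|x|^{1/2-\epsilon/2}+|x|^{1/2+\epsilon/2}\bigr)\bigl(|x|^{3/2-\epsilon/2}+|x|^{3/2+\epsilon/2}\bigr)} .
\]
The $\nabla\psi$ factor goes to $\vertiii{\nabla\psi}$ via \eqref{eq:weighted-to-Morrey}. Every $|\psi|^2$-weight, including $I_{m,2}$, goes to the spherical norm $\|\psi\|_R$ via the coarea/dyadic bound \eqref{eq:weighted-to-spherical}. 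Those $\|\psi\|_R^2$ contributions are then absorbed by the trace term $\frac{1}{8R^2}\int_{|x|=R}|\psi|^2\,d\sigma$ produced by the delta in \eqref{eq:plug-bilaplacian-ls}. This is the only usable zeroth-order positivity in three dimensions.

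That is why the theorem has two separate conditions, with thresholds $\frac16=\frac{d-1}{4d}$ and $\frac18=\frac{d-1}{16}$, and why $C_3$ enters only the second. Your closing sentence points this way, but with the inequality as you state it every term would be charged to the gradient term, and that step fails. Replacing it by \eqref{eq:weighted-to-spherical} repairs the argument and reproduces \eqref{eq:final-P-3d}.
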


Now we are in position to state the alternative results in higher dimensions $d\geq 4.$ We shall start with the result in the massless case.

\begin{theorem}\label{thm:higher-dim-massless}
 Let $d\geq 4,$ $m=0,$ and let Assumption~\ref{ass:HE} be satisfied. Assume that there exist $\epsilon\in(0,1/2)$ and two positive constants $C_1$ and $C_2$ such that for any $j=1,2,\dots, d,$ 
\begin{equation}\label{eq:hyp-higher-mzero}
 	 |\nabla V|\leq \frac{C_1}{|x|^{2-\epsilon}+ |x|^{2+\epsilon}},
 \qquad
 |[\alpha_j,V]|\leq \frac{C_2}{|x|^{1-\epsilon}+ |x|^{1+\epsilon}},
 \end{equation}
    and such that
 \begin{equation*}
 	 \left[\tfrac{3}{4} C_1 + \tfrac{3(d-1)}{16} C_2 \right] c_{\epsilon}+ \tfrac{3}{2} C_2 (c_{\epsilon/2})^2 <\tfrac{d-1}{4d},
 	\qquad 
 	 \left[\tfrac{3}{4} C_1 
 	+ \tfrac{3(d-1)}{16} C_2 
 	 \right]c_\epsilon
 	<\tfrac{(d-1)(d-3)}{8},
 \end{equation*}
 with $c_{\varepsilon},$ $\varepsilon=\epsilon$ or $\varepsilon=\epsilon/2,$ as defined in~\eqref{eq:ceps}. 
 Then for any solution $\psi$ to~\eqref{eq:dirac-evolution} the following estimate holds 
	\begin{multline}\label{eq:higher-final-mzero}
 	 \delta  \sup_{R>0} \frac{1}{R} \int_0^\infty \int_{|x|\leq R} |\nabla \psi|^2\, dx\, dt
 	 + \delta \int_0^\infty \int_{\R^d} \frac{|\psi|^2}{|x|^3}\, dx\, dt\\
 	 + \frac{1}{2}\int_0^\infty\int_{\R^d} \frac{|\partial_\tau \psi|^2}{|x|}\, dx\, dt 	
 	 + \frac{(d-1)}{16} \sup_{R>0} \frac{1}{R^2} \int_0^\infty\int_{|x|=R} |\psi|^2\, d\sigma \, dt
 	 \lesssim_d \| (H_D+V)\psi(0,\cdot) \|_{L^2}^2, 
\end{multline}  
for some $\delta>0.$
\end{theorem}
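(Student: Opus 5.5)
We prove the estimate with a Morawetz-type argument built on identity~\eqref{eq:morawetzid}. The weight $\phi$ is radial, $\phi=\phi(|x|)$, of the Morawetz type. The basic choice is $\phi(x)=|x|$. For the localized terms we also take an $R$-dependent weight $\phi_R(r)$, equal to $r^2/(2R)$ for $r\le R$ (up to constants) and $r$ minus a constant for $r\ge R$, as in the Schr\"odinger virial literature~\cite{BarceloRuizVega97,PerthameVega1999}.

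\textbf{Time integration.} For a solution $\psi$ of~\eqref{eq:dirac-evolution}, set $\Theta(t):=\langle \psi(t), L_\phi \psi(t)\rangle$. By~\eqref{eq:heisenberg},
\begin{equation*}
\dot\Theta(t)=\langle i[H,L_\phi]\rangle_{\psi}
=-\tfrac14\langle i[H_0,[H_0,\phi]]\rangle_\psi+\langle i[V,L_\phi]\rangle_\psi .
\end{equation*}
Integrating over $(0,T)$ and letting $T\to\infty$ gives
\begin{equation*}
\int_0^\infty\Big(-\tfrac14\langle i[H_0,[H_0,\phi]]\rangle_\psi\Big)dt\le 2\sup_t|\Theta(t)|+\int_0^\infty|\langle i[V,L_\phi]\rangle_\psi|\,dt .
\end{equation*}

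\textbf{Step 1: the free term.} The double commutator is the classical Schr\"odinger Morawetz quantity. For $\phi=|x|$ in $d\ge4$ it equals
\begin{equation*}
\int \frac{|\partial_\tau\psi|^2}{|x|}\,dx+\frac{(d-1)(d-3)}{4}\int\frac{|\psi|^2}{|x|^3}\,dx
\end{equation*}
up to normalization; here $\partial_\tau$ is the tangential gradient and we use $\Delta^2|x|=-(d-1)(d-3)|x|^{-3}$. For $\phi_R$ it yields $\frac1R\int_{|x|\le R}|\nabla\psi|^2$ plus a positive boundary term $\frac{d-1}{R^2}\int_{|x|=R}|\psi|^2\,d\sigma$. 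Combining the two weights with a small factor $\delta$ on the $\phi_R$ contribution produces exactly the left-hand side of~\eqref{eq:higher-final-mzero}. In particular the constant $\frac{(d-1)(d-3)}{8}$ in the second smallness condition is the coefficient of $\int|\psi|^2|x|^{-3}$.

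\textbf{Step 2: the boundary term.} Write $L_\phi=\{H_D,iG_\phi\}$, where $iG_\phi=-\frac14[H_0,\phi]=\frac14(2\nabla\phi\cdot\nabla+\Delta\phi)$ is first order with bounded coefficient $\nabla\phi$. For the $\Delta\phi\sim|x|^{-1}$ part we use Hardy's inequality. Then
\begin{equation*}
|\Theta(t)|\lesssim\|H_D\psi\|_{L^2}\|\nabla\psi\|_{L^2}+\|\nabla\psi\|_{L^2}^2\lesssim\|H_D\psi(t)\|_{L^2}^2 ,
\end{equation*}
since $m=0$ gives $\|H_D\psi\|_{L^2}=\|\nabla\psi\|_{L^2}$. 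Assumption~\ref{ass:HE} yields $(1-a)\|\nabla\psi\|_{L^2}\le\|H\psi\|_{L^2}$. Since $\|H\psi(t)\|_{L^2}$ is conserved, this bounds $\sup_t|\Theta|$ by $\|(H_D+V)\psi(0)\|_{L^2}^2$.

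\textbf{Step 3: the perturbative term (main obstacle).} Expand
\begin{equation*}
[V,L_\phi]=[V,H_D]\,iG_\phi+H_D[V,iG_\phi]+[V,iG_\phi]H_D+iG_\phi[V,H_D].
\end{equation*}
Two pieces of structure are available:
\begin{itemize}[label=$\cdot$]
\item $[V,iG_\phi]=-\frac12\nabla\phi\cdot\nabla V$ is a multiplication operator controlled by $C_1$, because $|\nabla\phi|\le1$;
\item $[V,H_D]=[V,\alpha\cdot p]=-i\sum_j[V,\alpha_j]\partial_j+i\sum_j\alpha_j(\partial_jV)$ is controlled by $C_2$ and $C_1$.
\end{itemize}
Each resulting term is a quadratic form that is at most first order on each side. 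It is bounded by $C_1$ or $C_2$ times a product of weighted norms: $\int|x|^{-1\pm\epsilon}|\nabla\psi|^2$ and $\int|x|^{-3\pm\epsilon}|\psi|^2$, with the decay split as $|x|^{-(1\mp\epsilon)/2}\cdot|x|^{-(1\pm\epsilon)/2}$.

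These $\epsilon$-weighted norms must then be controlled, after time integration, by the left-hand side. We use the weighted-Hardy/Morawetz interpolation encoded in $c_\epsilon$ and $c_{\epsilon/2}$, i.e.\ the inequality~\eqref{eq:ceps} introduced later, which bounds
\begin{equation*}
\int\frac{|f|^2}{|x|^{1-\epsilon}+|x|^{1+\epsilon}}\quad\text{by}\quad c_\epsilon\sup_R\frac1R\int_{|x|\le R}|f|^2 ,
\end{equation*}
together with the term $\int|\psi|^2|x|^{-3}$, which is available in $d\ge4$. The two displayed smallness conditions are exactly what is needed to absorb these contributions into the left-hand side:
\begin{itemize}[label=$\cdot$]
\item the first condition against the $\frac{d-1}{4d}$ share of the localized gradient/boundary terms;
\item the second condition against the $\frac{(d-1)(d-3)}{8}$ coefficient of the $|x|^{-3}$ term.
\end{itemize}

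The delicate point is the term $H_D[V,iG_\phi]+[V,iG_\phi]H_D$ and the pieces involving $\partial_j$ acting after $[\alpha_j,V]$. These must be symmetrized by integration by parts so that no derivative falls twice on $\psi$ and no $H$-norm beyond $L^2$ of $H\psi$ is needed. The constants must also be tracked carefully to match the stated thresholds. Finally, the estimate is justified first for data in a dense set (e.g.\ $\mathcal D(H^2)$) with truncated weights, and then extended by density.
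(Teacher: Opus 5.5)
Your overall architecture is the paper's. You test the equation against $L_\phi\psi$ via \eqref{eq:morawetzid} and split $\frac12\langle\psi,[V,L_\phi]\psi\rangle=\Re\langle[H_D,V]\psi,iG_\phi\psi\rangle-\Re\langle[iG_\phi,V]\psi,H_D\psi\rangle$. You then bound the pieces through \eqref{eq:weighted-to-Morrey} and $\int|\psi|^2|x|^{-3}$, and control the boundary term by Hardy, (HE) and conservation of $\|H\psi\|_{L^2}$. Absorbing only after integrating in time, and then taking $\sup_R$, is the right order.

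\textbf{First gap: the localized weight.} The weight you actually describe does not give a positive free term, so Step 1 fails. If $\phi_R=r^2/(2R)$ for $r\le R$ and $r-R/2$ for $r\ge R$, then $\Delta\phi_R$ equals $d/R$ inside and $(d-1)/r$ outside. It therefore jumps across $\{|x|=R\}$, and $\Delta^2\phi_R$ contains a double layer. In the sense of distributions,
\[
\int_{\R^d}\Delta^2\phi_R\,|\psi|^2\,dx=\frac1R\int_{|x|=R}\partial_r|\psi|^2\,d\sigma-\frac{d-1}{R^2}\int_{|x|=R}|\psi|^2\,d\sigma-(d-1)(d-3)\int_{|x|>R}\frac{|\psi|^2}{|x|^3}\,dx .
\]
The first term has no sign and involves the trace of $\partial_r\psi$ on a sphere. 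That trace is controlled neither by the energy nor by the left-hand side of \eqref{eq:higher-final-mzero}. What is needed is the weight $\phi_{\text{ls}}$ of \eqref{eq:phi1-ls}, whose outer part $\phi_{\text{ls}}'=\frac12-\frac{R^{d-1}}{2dr^{d-1}}$ is chosen precisely so that $\Delta\phi_{\text{ls}}=\frac{d-1}{2}\min\{R^{-1},r^{-1}\}$ is continuous. Note that $\phi_{\text{ls}}\sim r/2$ at infinity, not $r$ minus a constant. With this weight $\Delta^2\phi_{\text{ls}}=-\frac{(d-1)(d-3)}{2r^3}\chi_{\{r>R\}}-\frac{d-1}{2R^2}\delta_{|x|=R}\le 0$, which produces the spherical term with constant $\frac{d-1}{16}$. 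Also $D^2\phi_{\text{ls}}=\frac{d-1}{2dR}I$ on $\{|x|<R\}$, which gives the localized gradient term.

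\textbf{Second gap: the role of $\delta$.} The $\delta$ in \eqref{eq:higher-final-mzero} is not a small prefactor on the localized weight. With such a prefactor the absorption fails under the stated thresholds. The Morawetz part alone already produces the term $-\frac12\Re\langle -i[\alpha_j,V]\partial_j\psi,\partial_r\psi\rangle$. It involves the full gradient, radial part included, and is controlled only through $C_2\int|\nabla\psi|^2(|x|^{1-\epsilon}+|x|^{1+\epsilon})^{-1}dx\le 2C_2(c_{\epsilon/2})^2\vertiii{\nabla\psi}^2$. The tangential term $\int|\partial_\tau\psi|^2/|x|$ cannot absorb it, so only the localized gradient term can. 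With your prefactor that term carries the coefficient $\delta\frac{d-1}{4d}$, which would force $C_2\lesssim\delta$. Your own bookkeeping, which matches the first condition against $\frac{d-1}{4d}$, presupposes unit weight.

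The paper takes $\phi=\phi_{\text{M}}+\phi_{\text{ls}}$ with unit weights. Hence $|\phi'|\le\frac32$ and $|\phi''+\frac{d-1}{r}\phi'|\le\frac{3(d-1)}{r}$, which is where the constants $\frac34C_1$, $\frac{3(d-1)}{16}C_2$ and $\frac32C_2(c_{\epsilon/2})^2$ come from. There $\delta$ is just the margin left over in the two smallness conditions. This is also why the tangential and spherical terms keep their full constants $\frac12$ and $\frac{d-1}{16}$.

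A minor point: $\Theta=\langle\psi,L_\phi\psi\rangle$ is purely imaginary. Your time-integrated inequality should therefore be written for $\Im\Theta=2\Im\langle H_D\psi,iG_\phi\psi\rangle$, with the sign as in \eqref{eq:gen-id}.
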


The corresponding result in the massive case reads as follows:

\begin{theorem}\label{thm:higher-dim-massive}
 Let $d\geq 4,$ $m>0,$ and let Assumption~\ref{ass:HE} be satisfied. Assume that there exist $\epsilon\in (0,1)$ and four positive constants $C_1, C_2, C_3$ and $C_4$ such that for any $j=1,2,\dots, d,$
\begin{equation}\label{eq:hyp-higher-m}
 	 |\nabla V|\leq \frac{C_1}{|x|^{2-\epsilon}+ |x|^{2+\epsilon}},
 \quad
 |[\alpha_j,V]|\leq \frac{C_2}{|x|^{1-\epsilon}+ |x|^{1+\epsilon}},
 \quad
  |(x\cdot \nabla)\{V,\beta\}|\leq \frac{C_3}{|x|^2},
 \quad 
|[\beta,V]|\leq \frac{C_4}{|x|^{2-\epsilon}+ |x|^{2+\epsilon}},
 \end{equation}
  and such that
 \begin{equation*}
 	 \left[\tfrac{3}{4} C_1 + \tfrac{3(d-1)}{16} C_2 + \tfrac{3m}{8}C_4 \right] c_{\epsilon}+ \tfrac{3}{2} C_2 (c_{\epsilon/2})^2 <\tfrac{d-1}{4d},
 	\qquad 
 	 \left[\tfrac{3}{4} C_1 
 	+ \tfrac{3(d-1)}{16} C_2 
 	+\tfrac{3m}{8} C_4 \right]c_\epsilon
 	+\tfrac{3m}{8} C_3 
 	<\tfrac{(d-1)(d-3)}{8},
 \end{equation*}
 with $c_{\varepsilon},$ $\varepsilon=\epsilon$ or $\varepsilon=\epsilon/2,$ as defined in~\eqref{eq:ceps}. 
 Then for any solution $\psi$ to~\eqref{eq:dirac-evolution} the bound in~\eqref{eq:higher-final-mzero} holds with a smaller $\delta.$
\end{theorem}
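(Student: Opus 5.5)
The plan is to repeat, for $m>0$ and $d\ge4$, the scheme used in the massless case of Theorem~\ref{thm:higher-dim-massless}. The starting point is the Morawetz identity \eqref{eq:morawetzid}. Pick a radial weight $\phi$ of the usual Morawetz/local-smoothing type: for example $\phi$ with $\phi'(r)=r/R$ for $r\le R$ and $\phi'(r)=1$ for $r>R$, possibly combined with $\phi=|x|$. Set $F(t)=\langle \psi(t), L_\phi\psi(t)\rangle$. Since $\psi$ solves \eqref{eq:dirac-evolution}, we have
\[
\dot F(t)=\langle i[H,L_\phi]\rangle_\psi = -\tfrac{i}{4}\langle [H_0,[H_0,\phi]]\rangle_\psi + \langle i[V,L_\phi]\rangle_\psi .
\]
Integrating over $t\in(0,T)$ bounds $\int_0^T\dot F\,dt$ by $2\sup_t|F(t)|$. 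The free double commutator is the classical Schrödinger Morawetz form. Integrated by parts, it produces the positive quantities on the left of \eqref{eq:higher-final-mzero}:
\begin{itemize}[label={}]
\item $\int \phi''|\partial_r\psi|^2$ and $\int \frac{\phi'}{r}|\partial_\tau\psi|^2$;
\item $-\tfrac14\int \Delta^2\phi\,|\psi|^2$, which for $d\ge4$ gives the term $\frac{(d-1)(d-3)}{8}\int |\psi|^2|x|^{-3}$ (this is the origin of the constant on the right of the second smallness condition);
\item the sphere term $\frac{d-1}{16}R^{-2}\int_{|x|=R}|\psi|^2$.
\end{itemize}
The mass plays no role in this free part, because $H_D^2=(H_0+m^2)I$.

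Next, bound the boundary term. Write $L_\phi=\{H_D,iG_\phi\}$ with $iG_\phi=-\frac14[H_0,\phi]$, which is first order with bounded coefficients because $\nabla\phi\in L^\infty$ and $\Delta\phi\lesssim|x|^{-1}$. Then
\[
|F(t)|\lesssim \|H_D\psi\|\,\|\nabla\psi\|+\|H_D\psi\|\,\|\psi/|x|\|,
\]
using Hardy's inequality. Assumption~\ref{ass:HE} gives $\|H_D\psi\|\le(1-a)^{-1}\|H\psi\|$ and $\|\nabla\psi\|\le\|H_D\psi\|$. The quantity $\|H\psi(t)\|$ is conserved, so $\sup_t|F|\lesssim\|H\psi(0)\|^2$. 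This bound is uniform in $R$ and $T$, so we may take the supremum over $R$ and let $T\to\infty$.

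The main obstacle is the potential term $\langle i[V,L_\phi]\rangle$. Expand
\[
[V,\{H_D,iG_\phi\}]=\{[V,H_D],iG_\phi\}+\{H_D,[V,iG_\phi]\}.
\]
The commutator $[V,H_D]=[V,\alpha\cdot p]+m[V,\beta]$ splits further: $[V,\alpha\cdot p]$ contains $[V,\alpha_j]p_j$ plus $\alpha\cdot\nabla V$-type zero-order terms. The term $[V,iG_\phi]$ contains $\nabla\phi\cdot\nabla V$, which is of order zero. For the pieces involving $H_D$ acting on the potential we move $\beta$ across, and this is where the anticommutator $\{V,\beta\}$ and its radial derivative $(x\cdot\nabla)\{V,\beta\}$ enter (the $C_3$ term). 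Each resulting quadratic form is bounded with a weight from \eqref{eq:hyp-higher-m}:
\begin{itemize}[label={}]
\item terms with $|x|^{-2\pm\epsilon}$ decay are bounded, via Cauchy--Schwarz, by $\|\,|x|^{-1/2\mp\epsilon/2}\nabla\psi\|$ times a similar norm of $\psi$;
\item these weighted norms are controlled by the Morrey--Campanato quantity $\sup_R R^{-1}\int_{|x|\le R}|\nabla\psi|^2$ and the sphere supremum, through the weighted inequalities with constants $c_\epsilon, c_{\epsilon/2}$ of \eqref{eq:ceps};
\item the $C_3$ term, which has only the critical $|x|^{-2}$ decay, is controlled instead by the Hardy-type term $\int|\psi|^2|x|^{-3}$, which is available precisely because $d\ge4$.
\end{itemize}
This is why $C_3$ appears only in the second condition and without a $c_\epsilon$ factor.

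Finally, sum the contributions. The two stated smallness inequalities (with the extra $\tfrac{3m}{8}C_4$ and $\tfrac{3m}{8}C_3$ compared with the massless case) are exactly what is needed to absorb the potential terms into the positive free terms, with a leftover $\delta>0$. The leftover $\delta$ is smaller than in the massless case because of the extra $m$-dependent losses. The delicate point I expect is the exact bookkeeping of the mass terms $m[V,\beta]$ and $m\{V,\beta\}$: one must check that no term appears which is first order and carries only $|x|^{-2}$ decay, since such a term could not be absorbed.
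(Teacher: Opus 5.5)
Your overall route is the paper's. You use the identity \eqref{eq:gen-id} for $L_\phi=\{H_D,iG_\phi\}$ (your $F(t)$ equals $2i\Im\langle H_D\psi,iG_\phi\psi\rangle$). You control the boundary term via Hardy, Assumption~\ref{ass:HE} and conservation of $\|H\psi\|$. You split the potential term as in Lemma~\ref{lemma:potential-rewrite}: your $\{[V,H_D],iG_\phi\}+\{H_D,[V,iG_\phi]\}$ yields the same two pieces, and $C_3$ enters only through the zeroth-order form $\tfrac{m}{4}\langle(\phi'\partial_r\{V,\beta\})\psi,\psi\rangle$, absorbed by $\int|\psi|^2|x|^{-3}$.

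The one concrete choice you make, however, breaks the kinetic step. For $\phi'(r)=r/R$ on $\{r\le R\}$ and $\phi'=1$ on $\{r>R\}$ one has $\Delta\phi=d/R$ inside and $(d-1)/r$ outside, so $\Delta\phi$ jumps by $-1/R$ across $|x|=R$. Hence $\Delta^2\phi$ contains a double layer on the sphere, and $-\tfrac18\int\Delta^2\phi\,|\psi|^2$ contains the term $-\tfrac{1}{8R}\int_{|x|=R}\partial_r|\psi|^2\,d\sigma$. This term has no sign and involves the trace of $\nabla\psi$ on the sphere. Nothing on the left-hand side of \eqref{eq:higher-final-mzero} controls it, and adding $|x|$ does not remove it. The paper instead takes $\phi=r+\phi_{\text{ls}}$ with $\phi_{\text{ls}}'$ as in \eqref{eq:phi1-ls}. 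This is designed so that both $\phi_{\text{ls}}'$ and $\Delta\phi_{\text{ls}}$ are continuous, the latter equal to $\tfrac{d-1}{2R}$ for $r\le R$ and $\tfrac{d-1}{2r}$ for $r>R$. Then $\Delta^2\phi_{\text{ls}}=-\tfrac{(d-1)(d-3)}{2r^3}\chi_{\{r>R\}}-\tfrac{d-1}{2R^2}\delta_{|x|=R}$ has only a single layer, with the favourable sign, and $\phi_{\text{ls}}''>0$. This is exactly where the constants $\tfrac{d-1}{4d}$ and $\tfrac{d-1}{16}$ that you quote come from.

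Your assertion that the two stated inequalities are ``exactly what is needed'' also depends on bookkeeping you have not fixed.
The bounds \eqref{eq:estimate-phiprime-phipprime} on $\phi'$ and on $\phi''+\tfrac{d-1}{r}\phi'$ for this specific $\phi$ produce the coefficients in \eqref{eq:P-first-estimate]}.
For $d\ge4$, the cross terms $\int|\nabla V|\,|\psi|\,|\nabla\psi|$, $\int|[\alpha_j,V]|\,|\nabla\psi|\,|\psi|/|x|$ and $\int|[\beta,V]|\,|\psi|\,|\nabla\psi|$ are not estimated through the sphere norm. That is the $d=3$ argument, and it would lead to conditions with $(c_{\epsilon/2})^2$ measured against $\tfrac{d-1}{16}$. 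Instead one writes $|x|^{2\mp\epsilon}=|x|^{1/2\mp\epsilon}|x|^{3/2}$ and pairs \eqref{eq:weighted-to-Morrey}, with constant $c_\epsilon$, against $\|\,|x|^{-3/2}\psi\|_{L^2}$. Only $\int|[\alpha_j,V]|\,|\nabla\psi|^2$ uses $(c_{\epsilon/2})^2$.
Finally, apply \eqref{eq:weighted-to-Morrey} and Cauchy--Schwarz to the time-integrated quantities; the dyadic proof works verbatim for $\int_0^\infty\!\int$. The positive term you absorb into is $\tfrac{1}{R}\int_0^\infty\int_{|x|\le R}|\nabla\psi|^2$ for the fixed $R$ of the multiplier. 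A bound by $\int_0^\infty\sup_R\tfrac{1}{R}\int_{|x|\le R}|\nabla\psi|^2\,dt$ could therefore not be absorbed.
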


\begin{remark}\label{rem:control-gradient}
The term $\|(H_D+V)\psi(0,\cdot)\|_{L^2}^2$ appearing in the statements of Theorems~\ref{thm:3-dim-massless}--\ref{thm:higher-dim-massive} can be replaced by $\|\nabla\psi(0,\cdot)\|_{L^2}+m^2\|\psi(0,\cdot)\|_{L^2}$, that is, the smoothing estimates depend only on the $H^1$-norm of the initial datum $\psi(0,\cdot)$, see Remark~\ref{rem:almost-conservation} below for more details.
\end{remark}

\subsubsection*{Structure of the paper}
In the preliminary Section~\ref{sec:preliminary}, we
collect the notation and basic results used throughout the paper. 

The derivation of the new
relativistic virial identities for the stationary equation is presented in
Section~\ref{sec:spectral}, together with the proofs of the spectral stability results
stated in Theorems~\ref{thm:spectral-stability} and~\ref{thm:spectral-stability2}.

In Section~\ref{sec:local-smoothing}, these virial identities are further extended to the
evolutionary setting and applied to establish the local smoothing estimates of
Theorems~\ref{thm:3-dim-massless}–\ref{thm:higher-dim-massive}.

Finally, in Appendix~\ref{appendix:1}, we prove some results from
Section~\ref{sec:preliminary}.

\section{Preliminaries}\label{sec:preliminary}
We collect here several properties that will be used consistently throughout the paper. When a proof is omitted, the result is straightforward.
\begin{itemize}
	\item From the anti-commutation relations~\eqref{eq:commutation-rel-dirac-matr} satisfied by the Dirac matrices, for any $\psi\in \mathcal{D}(H_D)$ one has
	\begin{equation}\label{eq:free-dirac-identities}
		\|H_D \psi\|_{L^2}=\|\nabla \psi\|_{L^2},
		\qquad 
		\|H_D \psi\|_{L^2}^2=\|\nabla \psi\|_{L^2}^2 + m^2 \|\psi\|_{L^2}^2.
	\end{equation}	
	\item The dilation operator $G$ defined in~\eqref{eq:generator-dilations} is symmetric, thus $iG$ is anti-symmetric. Moreover, one has 
	\begin{equation}\label{eq:iG}
		iG= \frac{d}{2} + x\cdot \nabla.
	\end{equation}
	\item The commutator operator $[iG,V]$ can be explicitly written as
	\begin{equation}\label{eq:comm-iGV}
		[iG,V]=(x\cdot \nabla) V.
	\end{equation}
	\item For any \emph{radial function} $\varphi,$ one has
\begin{equation}\label{eq:radial-comp-iG}
	[H_0,\varphi]=-\varphi''-\tfrac{(d-1)}{r} \varphi' -2\varphi' \partial_r.
\end{equation}
	\item Using~\eqref{eq:radial-comp-iG}, it is easy to see that for $iG_\phi$ as defined in~\eqref{eq:iGphi} and $\phi$ a radial function one has
	\begin{equation}\label{eq:iGphi-computed}
	iG_\phi=-\tfrac{1}{4}\left(\phi''+\tfrac{(d-1)}{r} \phi'\right) -\tfrac{1}{2}\phi' \partial_r
	\end{equation}
	\item
	The commutator operator $[iG_\phi,V]$ can be explicitly written as
	\begin{equation}\label{eq:comm-iGfV}
		[iG_\phi,V]=\tfrac{1}{2} \phi' \partial_r V.
	\end{equation}
	\item The commutator operator $[H_D,V]$ is anti-symmetric. Moreover it can be explicitly written as
	\begin{equation}\label{eq:comm-HDV}
		[H_D,V]= -i [(\alpha \cdot \nabla) V] -i[\alpha_j, V]\partial_j + m[\beta, V].
	\end{equation}
	\item We will be needing the \emph{classical Hardy inequality}
	\begin{equation}\label{eq:classical-Hardy}
		\int_{\R^d} \frac{|\psi(x)|^2}{|x|^2}\, dx \leq C_{\text{H}}^2 \int_{\R^d} |\nabla \psi|^2(x)\, dx, \qquad \forall \, \psi\in C^\infty_0(\R^d), \quad d\geq 3,
	\end{equation}
	where the best constant $C_{\text{H}}^2$ is given by $C_{\text{H}}^2:=4/(d-2)^2.$
\item The Morrey-Campanato norm is defined as
\begin{equation}\label{eq: Morrey-C}
    \vertiii{u}^2 := \sup_{R>0} \frac{1}{R} \int_{|x|\leq R} |u|^2\, dx.
\end{equation}
We will make consistent use of the bound
\begin{equation}\label{eq:weighted-to-Morrey}
    \left(\int_{\R^d} \frac{|u|^2}{(|x|^{1/2-\delta} + |x|^{1/2+\delta})^2}\,dx\right)^{1/2} \leq c_\delta \vertiii{u},
    \quad\text{for}\ \delta\in (0,1/2),
\end{equation}
where $c_\delta$ is given in~\eqref{eq:ceps}. The proof is deferred to Appendix~\ref{appendix:1}.

\item We also introduce the spherical norm
\begin{equation*}
    \|u\|_{R}^2 := \sup_{R>0} \frac{1}{R^2} \int_{|x|=R} |u|^2\,d\sigma,
\end{equation*}
together with the analogous bound
\begin{equation}\label{eq:weighted-to-spherical}
    \left(\int_{\R^d} \frac{|u|^2}{(|x|^{3/2-\delta} + |x|^{3/2+\delta})^2}\,dx\right)^{1/2} \leq \frac{c_\delta}{\sqrt{2}} \|u\|_{R},
    \quad\text{for}\ \delta\in (0,1/2),
\end{equation}
where $c_\delta$ defined in~\eqref{eq:ceps}. The proof is again deferred to Appendix~\ref{appendix:1}.
\end{itemize}

\section{On the point spectrum of the perturbed Dirac operator}
\label{sec:spectral}
Throughout this section, we assume Assumption~\ref{ass:HS} and consider the following eigenvalue problem
\begin{equation}\label{eq:eigenvalue-pb}
	H\psi=\lambda \psi,
	\qquad \lambda \in \R,
\end{equation}
associated to the self-adjoint Hamiltonian $H=H_D+V,$ with $H_D$ being the free Dirac operator defined in~\eqref{eq:Dirac}.

Our main purpose is to find suitable smallness conditions on $V$ such that solutions to~\eqref{eq:eigenvalue-pb} cannot exist. In order to do that we will need the following easy integral identity.
\begin{lemma}[Virial identity]\label{lemma:virial-identity}
	Let $\psi\in \mathcal{D}(H)$ be a solution to~\eqref{eq:eigenvalue-pb}, then one has 
	\begin{equation}\label{eq:virial-identity}
		\frac{1}{2}\langle \psi, [H,L] \psi \rangle=0,		
	\end{equation}
	where $L$ is the second-order, anti-symmetric operator defined in~\eqref{eq:L}.
\end{lemma}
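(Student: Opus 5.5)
The formal idea is immediate. Since $H\psi=\lambda\psi$ with $\lambda\in\R$ and $H$ is self-adjoint,
\[
\langle\psi,HL\psi\rangle-\langle\psi,LH\psi\rangle=\langle H\psi,L\psi\rangle-\lambda\langle\psi,L\psi\rangle=\lambda\langle\psi,L\psi\rangle-\lambda\langle\psi,L\psi\rangle=0 .
\]
The whole content of the lemma is therefore to justify this computation. The difficulty is that $L=\{H_D,iG\}$ is a second-order operator with unbounded coefficients (it contains $x\cdot\nabla$ composed with $\alpha\cdot\nabla$). A priori $\psi\in\D(H)\subseteq H^1$ is not in $\D(L)$, and $x\psi$ need not be in $L^2$.

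I would therefore regularize. Replace $L$ by
\[
L_R:=\{H_D,iG_{\phi_R}\}, \qquad \phi_R(x)=R^2\phi(x/R),
\]
where $\phi$ is smooth and radial, equals $|x|^2$ on $B_1$, and is constant outside $B_2$. Then by~\eqref{eq:iGphi-computed} the operator $iG_{\phi_R}$ is a first-order operator with compactly supported smooth coefficients. This is consistent with~\eqref{eq:L2}, since $L=L_\phi$ for $\phi=|x|^2$. I would also mollify $\psi$: set $\psi_\varepsilon=\rho_\varepsilon*\psi$, or use a resolvent smoothing $\psi_\varepsilon=(1+\varepsilon H_D^2)^{-1}\psi$. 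Then $\psi_\varepsilon\in H^\infty$, and
\[
H\psi_\varepsilon-\lambda\psi_\varepsilon=[V,\rho_\varepsilon*]\psi .
\]
For fixed $R$, all terms in $\langle H\psi_\varepsilon,L_R\psi_\varepsilon\rangle-\langle L_R^*\psi_\varepsilon,H\psi_\varepsilon\rangle$ make sense, and $L_R$ is formally anti-symmetric. So I would write
\[
\langle\psi_\varepsilon,[H,L_R]\psi_\varepsilon\rangle=\langle (H-\lambda)\psi_\varepsilon,L_R\psi_\varepsilon\rangle+\langle L_R\psi_\varepsilon,(H-\lambda)\psi_\varepsilon\rangle
\]
and show that the right-hand side tends to $0$ as $\varepsilon\to0$.

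The main obstacle is the singularity of $V$ at the origin, which is only smooth on $\R^d\setminus\{0\}$. Near $0$ the localized operator $L_R$ still acts like $L$, so I would also cut off near the origin with $\chi_\eta$ vanishing on $B_\eta$. There the commutator $[V,\rho_\varepsilon*]$ is controlled because $V$ is smooth on the support. On $B_\eta$ I would use that $\psi\in H^1$ and $V\psi\in L^2$ (which holds since $H_D\psi\in L^2$). I would interpret $\langle\psi,[H,L]\psi\rangle$ as the quadratic form obtained after one integration by parts, namely $\langle H\psi,L\psi\rangle$ symmetrized, in which only first derivatives of $\psi$ appear, paired against $H\psi$ and $\nabla\psi$. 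The error terms involving derivatives of $\chi_\eta$ are of size
\[
\eta^{-1}\int_{B_{2\eta}\setminus B_\eta}|x|\,|\nabla\psi|\,|H\psi| \lesssim \|\nabla\psi\|_{L^2(B_{2\eta})}\|H\psi\|_{L^2},
\]
and these vanish as $\eta\to0$. The large-$R$ errors are similarly supported in $R\le|x|\le2R$, with coefficients bounded by $R\,|\nabla\phi_R|/R\lesssim 1$ times first derivatives. They vanish by dominated convergence as $R\to\infty$. Thus the identity should be understood (and used later) in this quadratic-form sense on $\D(H)$. Sending $\varepsilon\to0$, then $\eta\to0$, then $R\to\infty$ yields~\eqref{eq:virial-identity}.
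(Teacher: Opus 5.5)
Your proposal is correct and follows essentially the paper's argument. The formal step is the same multiplier computation: you use $H=H^*$ and $\lambda\in\R$ directly, whereas the paper takes real parts and uses that $\langle\psi,L\psi\rangle\in i\R$ by anti-symmetry of $L$. Your regularization (truncating the dilation multiplier at infinity, cutting off near the origin, and passing to the limit via Hardy and dominated convergence) is a variant of the paper's Remark after the lemma, which instead approximates $\psi$ by compactly supported $\psi_R$ vanishing near $0$ and uses their $H^2$ regularity (following Arrizabalaga--Cossetti--Morales, Lemma 3.1); that local $H^2$ regularity away from the origin also makes your mollification step unnecessary.
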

\begin{proof}
	The proof is obtained through an easy application of the so-called method of multipliers: multiplying (in $L^2$) equation~\eqref{eq:eigenvalue-pb} by the test function $L\psi,$ with $L$ as defined in~\eqref{eq:L} and then considering the real parts of the resulting identities yields
	\begin{equation}\label{eq:v1}
		\Re \langle H\psi, L\psi \rangle = \lambda \Re \langle \psi, L\psi \rangle.
	\end{equation}
	Since $L$ is anti-symmetric, the expectation value $\langle \psi, L\psi \rangle$ is purely imaginary, thus the right-hand-side of~\eqref{eq:v1} is zero. Identity~\eqref{eq:virial-identity} then follows simply observing that $\Re \langle H\psi, L\psi \rangle= \tfrac{1}{2}\langle \psi, [H,L] \psi \rangle.$
\end{proof}

\begin{remark}
To make the proof of Lemma~\ref{lemma:virial-identity} fully rigorous, one has to ensure that the test function $v=L\psi$ indeed belongs to $L^2(\R^d).$ In general, assumption $\psi\in H^1$ is not sufficient to guarantee this, since $v$ is possibly unbounded and the operator $L$ is second order. A rigorous proof then shoud begin by approximating the solution $\psi$ with a sequence of compactly supported functions $\psi_R$ vanishing near the origin that satisfy a related problem. Then, one further observes that hypothesis~\ref{ass:HS} on the potential allows to prove that each $\psi_R$ enjoys higher regularity, specifically $\psi_R\in H^2(\R^d).$ The desired identity can thus be rigorously  established for $\psi_R$ and identity~\eqref{eq:virial-identity} in Lemma~\ref{lemma:virial-identity} eventually follows by passing to the limit as $R \to \infty.$ For a detailed presentation of this regularization argument, see~\cite[Lemma 3.1]{ArrizabalagaCossettiMorales2025}.
\end{remark}

 We shall now expand further the commutator $[H,L].$ Due to expression~\eqref{eq:spectral}, we will only need to consider the part involving the potential, namely $[V,L].$ We show the following result.
 \begin{lemma}\label{lemma:potential-rewrite}
 \begin{equation}\label{eq:v2}
 	\frac{1}{2}\langle \psi, [V,L]\psi \rangle
 	=\Re \langle [H_D, V] \psi, iG \psi \rangle 
 	-\Re \langle [iG, V]\psi, H_D \psi \rangle.
 \end{equation}
 \begin{proof}
 	Writing explicitly the operator $L$ as defined in~\eqref{eq:L}, one has 
 	\begin{equation*}
 	\begin{split}
 		\frac{1}{2}\langle \psi, [V,L]\psi \rangle&= \Re \langle V\psi, L\psi \rangle\\
 		&=\Re \langle V\psi, H_D iG\psi \rangle + \Re \langle V\psi, iG H_D\psi \rangle\\
 		&=\Re \langle H_D V\psi, iG\psi \rangle - \Re \langle iG V\psi, H_D\psi \rangle\\
 		&=\Re \langle V H_D\psi, iG\psi \rangle + \Re \langle [H_D,V]\psi, iG\psi \rangle -\Re \langle iG V\psi, H_D\psi \rangle\\
 		&=\Re \langle V H_D\psi, iG\psi \rangle + \Re \langle [H_D,V]\psi, iG\psi \rangle -\Re \langle V iG \psi, H_D\psi \rangle -\Re \langle [iG,V] \psi, H_D\psi \rangle \\
 		&=\Re \langle [H_D, V] \psi, iG \psi \rangle 
 	-\Re \langle [iG, V]\psi, H_D \psi \rangle,
 	\qedhere
 	\end{split}
 	\end{equation*} 	
 \end{proof}
 \end{lemma}

\begin{remark}\label{rem:Vphi-alternative}
In the proof of identity~\eqref{eq:v2} contained in Lemma~\ref{lemma:potential-rewrite} we have used only that $V$ and $H_D$ are symmetric and that $L$ and $iG$ are skew-symmetric. Thus identity~\eqref{eq:v2} also holds replacing $L$ with $L_\phi$ and $iG$ with $iG_\phi,$ where $L_\phi$ and $iG_\phi$ are defined in~\eqref{eq:L2} and~\eqref{eq:iGphi}, respectively.
\end{remark}

Now we are in position to prove our results. We shall prove first the result in the massive case, namely Theorem~\ref{thm:spectral-stability2}, then the massless counterpart Theorem~\ref{thm:spectral-stability} will follow as an easy consequence. 
\begin{proof}[Proof of Theorem~\ref{thm:spectral-stability2}]
We consider as a starting point identity~\eqref{eq:v2} above. Since $[H_D,V]$ is anti-symmetric, then $\langle [H_D,V] \psi, \psi \rangle$ is purely immaginary, therefore, in view of~\eqref{eq:iG}, identity~\eqref{eq:v2} reduces to
\begin{equation*}
\tfrac{1}{2}\langle \psi, [V,L]\psi \rangle
 	=\Re \langle [H_D, V] \psi, x\cdot \nabla \psi \rangle 
 	-\Re \langle [iG, V]\psi, H_D \psi \rangle.
\end{equation*}
Now, using the explicit expressions for the commutators  $[H_D, V]$ and $[iG, V]$ given in~\eqref{eq:comm-HDV} and in~\eqref{eq:comm-iGV}, respectively, one has 
\begin{multline*}
	\frac{1}{2}\langle \psi, [V,L]\psi \rangle
	\\=\Re \langle -i [(\alpha \cdot \nabla) V] \psi, x\cdot \nabla \psi \rangle+
	\Re \langle-i[\alpha_j, V]\partial_j \psi, x\cdot \nabla \psi \rangle
	+m \Re \langle [\beta,V] \psi, x\cdot \nabla \psi \rangle 
 	-\Re \langle [(x\cdot \nabla) V]\psi, H_D \psi \rangle.
\end{multline*} 
To conclude, it remains to estimate the following five terms:\begin{equation*}
\begin{gathered}
	I_1:=\Re \langle -i[(\alpha \cdot \nabla) V] \psi, x\cdot \nabla \psi \rangle,
	\quad
	I_2:=\Re \langle -i[\alpha_j,V] \partial_j\psi, x\cdot \nabla \psi \rangle,
	\quad
	I_3:=m \Re \langle [\beta,V] \psi, x\cdot \nabla \psi \rangle,\\
	I_4:=-\Re \langle [(x\cdot \nabla) V]\psi,-i \alpha\cdot \nabla \psi \rangle,\quad
	I_5:=-m\Re \langle [(x\cdot \nabla) V]\psi, \beta \psi \rangle.
\end{gathered}
\end{equation*}
Using Cauchy-Schwarz, the Hardy inequality and hypotheses~\eqref{eq:ifin2} one has
\begin{equation*}
	|I_1| + |I_4|\leq 2 \||x||\nabla V| |\psi|\|\|\nabla \psi\|
	\leq \frac{4C_1}{d-2} \|\nabla \psi\|^2.
\end{equation*}
As for $I_2,$ as above, one has
\begin{equation*}
	|I_2| \leq \||x||[\alpha_j, V]| |\partial_j\psi|\|\|\nabla \psi\|
	\leq C_2 \|\nabla \psi\|^2.
\end{equation*}
As for $I_3$ one has
\begin{equation*}
	|I_3| \leq  m\||x||[\beta, V]| |\psi|\|\|\nabla \psi\|
	\leq \frac{2mC_4}{d-2} \|\nabla \psi\|^2,
\end{equation*}
We finally turn to the estimate of $I_5$. Since the multiplication operator $(x\cdot\nabla) V$ is symmetric (as $V$ is Hermitian) and $\beta$ is symmetric, 
$I_5$ can be estimated in terms of the anticommutator of these two operators. Moreover, since $\beta$ is a constant matrix, one has 
$\{(x\cdot\nabla) V,\beta\}=(x\cdot\nabla)\big[\{V,\beta\}\big]$. Hence, using Hardy's inequality one has
\begin{equation*}
|I_5|=\left|\frac{m}{2}\langle \psi, (x\cdot \nabla) [\{V,\beta\}]\psi \rangle\right|
\leq  \frac{2 m C_3}{(d-2)^2}\|\nabla\psi\|^2.
\end{equation*} 
Now, using in order~\eqref{eq:virial-identity},~\eqref{eq:spectral} and the last bounds, one has 
\begin{equation}\label{eq:estimate-nabla-psi}
\begin{split}
	\|\nabla \psi\|^2 &\leq |I_1|+ |I_2|+|I_3|+|I_4|+|I_5|\\
	&\leq \Big(\frac{4C_1}{d-2} + C_2 + \frac{2mC_4}{d-2} + \frac{2mC_3}{(d-2)^2} \Big)\|\nabla \psi\|^2.
\end{split}
\end{equation}
This, due to hypothesis~\eqref{eq:tesi1}, gives the thesis. 
\end{proof}

\begin{proof}[Proof of Theorem~\ref{thm:spectral-stability}]
The proof of this result simply follows from the previous one observing that in this case $I_3$ and $I_5$ are zero.
\end{proof}

\section{Local smoothing for the perturbed Dirac equation}
\label{sec:local-smoothing}
Throughout this section, we assume Assumption~\ref{ass:HE}
and we prove the local smoothing estimates for the perturbed Dirac operator given in the introduction.
Unlike in Section~\ref{sec:spectral}, the functions considered here depend on both the time and space variables, i.e.\ $\psi = \psi(t,x)$. We omit this dependence when it is clear from the context, and make it explicit when it is relevant.

In order to justify the algebraic manipulations introduced in this section, we approximate the initial data $f\in H^1(\R^d)$ corresponding to our solution $\psi=e^{itH}f,$ by a sequence $f_j\in C^\infty_c(\R^d),$ and note that the corresponding solutions $\psi_j:=e^{itH}f_j\in C(\R; H^{3/2}(\R^d).$ For readability, we drop the subscript $j$ in what follows.

\medskip
The starting point for the proofs of Theorem~\ref{thm:3-dim-massless} - Theorem~\ref{thm:higher-dim-massive} is the following virial identity for the evolution equation~\eqref{eq:dirac-evolution}.
\begin{lemma}[Time-dependent virial identity]
\label{lemma:gen-id}
	Let $\psi$ be a solution to~\eqref{eq:dirac-evolution}, then one has 
\begin{equation}\label{eq:gen-id}
	\frac{d}{dt} \Im \langle H_D \psi, iG_\phi \psi \rangle 
	= -\frac{1}{2}\int_{\R^d} \nabla \psi D^2 \phi  \overline{\nabla\psi}\, dx+\frac{1}{8} \int_{\R^d} \Delta^2 \phi |\psi|^2\, dx -\frac{1}{2} \langle \psi, [V, L_\phi]\psi \rangle,
\end{equation}	
where $iG_\phi$ is defined in~\eqref{eq:iGphi} and $L_\phi$ is defined in~\eqref{eq:L2}.
\begin{proof}
In order to obtain~\eqref{eq:gen-id} one multiplies equation~\eqref{eq:dirac-evolution} by the test function $L_\phi \psi,$ with $L_\phi$ as defined in~\eqref{eq:L2}, and then takes the real part of the resulting identity. This gives
\begin{equation}\label{eq:preliminary}
	\Re \langle i\partial_t \psi, L_\phi \psi \rangle
	= \Re \langle H\psi,  L_\phi \psi  \rangle.
\end{equation}
Let us start from the left-hand side. Using that $L_\phi$ is anti-symmetric and time-independent, one has
\begin{equation*}
\begin{split}
	\Re \langle i\partial_t \psi, L_\phi \psi \rangle
	&= - \Im \langle \partial_t \psi, L_\phi \psi \rangle\\
	&= \frac{i}{2} \big(\langle \partial_t \psi,L_\phi \psi \rangle - \langle L_\phi \psi, \partial_t \psi \rangle \big)\\
	&=\frac{i}{2} \frac{d}{dt}\langle \psi, L_\phi \psi \rangle.
\end{split}
\end{equation*}
Computing explicitly $\langle \psi, L_\phi \psi \rangle$ we have
\begin{equation*}
\begin{split}
	\langle \psi, L_\phi \psi \rangle
	&=\langle \psi, \{H_D,iG_\phi\} \psi \rangle\\
	&=\langle \psi, H_D  iG_\phi \psi \rangle + \langle \psi, iG_\phi H_D \psi \rangle\\
	&= \langle H_D \psi, iG_\phi \psi \rangle - \langle iG_\phi \psi, H_D \psi \rangle\\
	&=2i \Im \langle H_D \psi, iG_\phi \psi \rangle.   
	\end{split}
\end{equation*}
Now we consider the right-hand side of \eqref{eq:preliminary}. Using the identity~\eqref{eq:morawetzid} and writing explicitly the double commutator one has
\begin{equation*}
\begin{split}
\Re \langle H \psi, L_\phi \psi \rangle
&= \frac{1}{2} \langle \psi, [H,L_\phi]\psi \rangle\\
&= -\frac{1}{8} \langle \psi, [H_0, [H_0, \phi]]\psi \rangle +\frac{1}{2} \langle \psi, [V, L_\phi]\psi \rangle\\
&=\frac{1}{2}\int_{\R^d} \nabla \psi D^2 \phi  \overline{\nabla\psi}-\frac{1}{8} \int_{\R^d} \Delta^2 \phi |\psi|^2\, dx +\frac{1}{2} \langle \psi, [V, L_\phi]\psi \rangle. 
\end{split}
\end{equation*}
Putting those identities in~\eqref{eq:preliminary} and multiplying the resulting identity by $-1$ one gets~\eqref{eq:gen-id}.
\end{proof}
\end{lemma}
\begin{remark}
Notice that the previous computations are rigorous due to the assumption $\psi\in H^{3/2}.$ Indeed, if $\psi\in H^{3/2},$ then $L_\phi \psi \in H^{-1/2},$ while $H_D \psi \in H^{1/2},$ and $V\psi \in H^{1/2}.$ The latter follows from the Kato smallness~\eqref{eq:smallness-potentia} in assumption~\ref{ass:HE} and the pointwise bound on $\nabla V.$ Therefore, the brackets in~\eqref{eq:preliminary} are understood as the duality pairing between $H^{1/2}$ and $H^{-1/2}.$
\end{remark}

As already mentioned, identity~\eqref{eq:gen-id} provides the starting point for the proofs of Theorems~\ref{thm:3-dim-massless}–\ref{thm:higher-dim-massive}. To carry this out, an appropriate multiplier $\phi$ must be selected. In the next section, we introduce this multiplier explicitly and describe its main properties. 

\subsection{The choice of the multiplier}
Let $r:=|x|,$ we define 
\begin{equation}\label{eq:multiplier-choice}
	\phi(r)=\phi_{\text{M}}(r) + \phi_{\text{ls}}(r),
\end{equation}
where $\phi_{\text{M}}(r):=r$ is the so-called \emph{Morawetz} multiplier, while $\phi_{\text{ls}}(r)$ is the \emph{local smoothing} multiplier defined as follows: for any $R>0$ we take $\phi_{\text{ls}}(r):=\int_0^r \varphi_{\text{ls}}'(s)\, ds,$ with
\begin{equation}\label{eq:phi1-ls}
	\phi_{\text{ls}}'(r)=
	\begin{cases}
		\dfrac{(d-1)}{2d} \dfrac{r}{R},  &\text{for}\ r\leq R\\[8pt]
		\dfrac{1}{2}- \dfrac{R^{d-1}}{2d r^{d-1}} &\text{for}\ r>R.
	\end{cases}
\end{equation}
As customarily, the local smoothing multiplier $\phi_{\text{ls}}$ is defined as a function that has the same behavior of the virial multiplier $r^2$ inside a ball of radius $R$ centered at zero and it behaves as the Morawetz multiplier $\phi_{\text{M}}$ outside this ball.

\medskip
Once the multiplier is selected, we shall plug our choice~\eqref{eq:multiplier-choice} in the virial identity~\eqref{eq:gen-id} obtained in Lemma~\ref{lemma:gen-id}. Then our results will be obtained after carefully estimating each term of the resulting identity. 

For the sake of clarity, we split these computations into three parts: the estimate of the \emph{kinetic term} on the right-hand side of~\eqref{eq:gen-id}, the estimate of the \emph{potential term} on the right-hand side of~\eqref{eq:gen-id}, and finally the treatment of the \emph{time-derivative term} on the left-hand side of~\eqref{eq:gen-id}.

\subsection{Kinetic term}
This section is devoted to establish a suitable lower bound for the following quantity
\begin{equation}\label{eq:kinetic}
	K:=\frac{1}{2}\int_{\R^d} \nabla \psi D^2 \phi \overline{\nabla \psi}\, dx-\frac{1}{8} \int_{\R^d} \Delta^2 \phi |\psi|^2\, dx,
\end{equation}
with $\phi$ as defined in~\eqref{eq:multiplier-choice}, which appears in the right-hand-side of~\eqref{eq:gen-id}. The main result of this section is the following:
\begin{proposition}\label{prop:K-lower-bound}
Let $K$ be the kinetic term defined in~\eqref{eq:kinetic}. Then one has
\begin{equation}\label{eq:K-lower-bound}
	K\geq 
	\frac{1}{2} \int_{\R^d} \frac{|\partial_\tau \psi|^2}{|x|}\, dx 
	+ \frac{(d-1)}{4d} \frac{1}{R}\int_{|x|\leq R}|\nabla \psi|^2\, dx
	+ \frac{(d-1)(d-3)}{8} \int_{\R^d} \frac{|\psi|^2}{|x|^3}\, dx
	+ \frac{(d-1)}{16}\frac{1}{R^2} \int_{|x|=R} |\psi|^2\, d\sigma(x).
\end{equation}
\end{proposition}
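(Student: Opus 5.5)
The plan is to exploit the radial structure of $\phi=\phi_{\text{M}}+\phi_{\text{ls}}$ and compute the Hessian and bi-Laplacian of each piece explicitly, in the sense of distributions. Both terms in $K$ are linear in $\phi$, so the two multipliers can be treated separately. For any radial $\phi$ one has
\[
D^2\phi=\phi''\,\hat x\otimes\hat x+\frac{\phi'}{r}\bigl(I-\hat x\otimes\hat x\bigr),\qquad \hat x=x/|x|,
\]
so that $\nabla\psi D^2\phi\overline{\nabla\psi}=\phi''|\partial_r\psi|^2+\frac{\phi'}{r}|\partial_\tau\psi|^2$, where $\partial_\tau$ denotes the tangential part of the gradient. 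For the fourth-order term I would write $\Delta\phi=\phi''+\frac{d-1}{r}\phi'$ and apply the radial Laplacian once more. Any jump of $(\Delta\phi)'$ across a sphere produces a single layer on that sphere, while continuity of $\Delta\phi$ itself rules out a double layer. To justify pairing $\Delta^2\phi$ with $|\psi|^2$, I would integrate by parts as $\int\Delta^2\phi|\psi|^2=\int\Delta\phi\,\Delta|\psi|^2$, using the $H^{3/2}$ regularity of the approximating solutions.

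Morawetz part, $\phi_{\text{M}}=r$. Here $\phi''=0$ and $\phi'/r=1/r$, so the Hessian term contributes exactly $\frac12\int|\partial_\tau\psi|^2/|x|$. Next, $\Delta r=\frac{d-1}{r}$ and $\Delta^2 r=-\frac{(d-1)(d-3)}{r^3}$ for $d\ge4$. Hence $-\frac18\int\Delta^2 r\,|\psi|^2=\frac{(d-1)(d-3)}{8}\int|\psi|^2/|x|^3$. In $d=3$ this coefficient vanishes away from the origin, and $\Delta(2/r)=-8\pi\delta_0$ gives the contribution $+\pi|\psi(0)|^2\ge0$, which can simply be dropped. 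So the inequality holds with the stated (zero) coefficient in $d=3$ as well.

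Local smoothing part. From \eqref{eq:phi1-ls}, for $r\le R$ one has $\phi_{\text{ls}}''=\phi_{\text{ls}}'/r=\frac{d-1}{2dR}$. Hence $D^2\phi_{\text{ls}}=\frac{d-1}{2dR}I$ there, and the Hessian term contributes $\frac{d-1}{4d}\frac1R\int_{|x|\le R}|\nabla\psi|^2$. For $r>R$ one has $\phi_{\text{ls}}''=\frac{(d-1)R^{d-1}}{2dr^d}\ge0$ and $\phi_{\text{ls}}'\ge0$, so that contribution is nonnegative and is discarded. For the bi-Laplacian term, $\Delta\phi_{\text{ls}}=\frac{d-1}{2R}$ inside the ball and $\frac{d-1}{2r}$ outside, so $\Delta\phi_{\text{ls}}$ is continuous at $r=R$. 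Its radial derivative, however, jumps from $0$ to $-\frac{d-1}{2R^2}$. Therefore
\[
\Delta^2\phi_{\text{ls}}=-\frac{(d-1)(d-3)}{2r^3}\mathbf 1_{\{r>R\}}-\frac{d-1}{2R^2}\,d\sigma_{\{|x|=R\}}.
\]
Multiplying by $-\frac18$ gives the sphere term $\frac{d-1}{16R^2}\int_{|x|=R}|\psi|^2\,d\sigma$, plus a nonnegative exterior term that is dropped. Summing the Morawetz and local smoothing contributions yields \eqref{eq:K-lower-bound}.

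The main obstacle is the rigorous distributional computation of $\Delta^2\phi$. This involves the singular behaviour of $r$ at the origin, in particular the delta mass in $d=3$, and the single layer on $\{|x|=R\}$. I would handle it by integrating by parts separately on $\{r<\varepsilon\}$, $\{\varepsilon<r<R\}$ and $\{r>R\}$, tracking the boundary terms, and then letting $\varepsilon\to0$. The continuity of $\phi_{\text{ls}}'$ and of $\Delta\phi_{\text{ls}}$ at $R$ is what guarantees that only the favourable surface term survives.
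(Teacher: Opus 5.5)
Your proposal is correct and follows essentially the same route as the paper. The paper also splits $\phi=\phi_{\text{M}}+\phi_{\text{ls}}$ and computes $\nabla\psi D^2\phi\overline{\nabla\psi}$ and $\Delta^2\phi$ for each piece from the radial formulas, including $-8\pi\delta_0$ in $d=3$ and the single layer $-\frac{d-1}{2R^2}\delta_{|x|=R}$; it then discards the nonnegative exterior contributions, and your remarks on the sign of the $d=3$ point mass and on the distributional justification only make explicit what the paper leaves implicit.
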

\begin{remark}
	 In the lower bound~\eqref{eq:K-lower-bound}, the third term on the right-hand side vanishes when $d=3.$ Consequently, in three dimensions there is less positivity available to control the terms arising from the perturbation.
\end{remark}
To prove Proposition~\ref{prop:K-lower-bound}, we need a more explicit expression for $\nabla \psi D^2 \phi \overline{\nabla \psi}$ and $\Delta^2 \phi.$
Recalling~\eqref{eq:multiplier-choice}, the following results will be useful for this purpose. 
\begin{lemma}\label{lemma:after-multiplier-choice-M}
Let $\phi_{\text{M}}$ be the Morawetz multiplier, namely $\phi_{\text{M}}(r)=r.$ Then one has 
\begin{equation}
	\label{eq:plug-Hessian-bilaplacian-M}
	\nabla \psi D^2\phi_{\text{M}} \overline{\nabla \psi}=\frac{|\partial_\tau \psi|^2}{r},
	\qquad 
	\Delta^2 \phi_{\text{M}}=
	\begin{cases}
	-8\pi \delta_{x=0} \qquad &d=3,\\[6pt]
	-\frac{(d-1)(d-3)}{r^3}&d\geq 4.
	\end{cases}
\end{equation}	
\end{lemma}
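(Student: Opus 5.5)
The plan is a direct computation for a radial function, using the standard formulas for derivatives of $\phi(r)$ with $r=|x|$. For the Hessian, I would use the general identity for radial $\phi$,
\begin{equation*}
\partial_j\partial_k \phi = \phi''(r)\frac{x_jx_k}{r^2} + \frac{\phi'(r)}{r}\Big(\delta_{jk}-\frac{x_jx_k}{r^2}\Big),
\end{equation*}
valid for $x\neq 0$. With $\phi_{\text{M}}(r)=r$ one has $\phi'=1$ and $\phi''=0$. Hence $D^2\phi_{\text{M}}=\frac{1}{r}\big(I-\frac{x\otimes x}{r^2}\big)$, which is $\frac{1}{r}$ times the orthogonal projection onto the tangential directions.

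Contracting with $\nabla\psi$ and $\overline{\nabla\psi}$ then gives $\frac{1}{r}\big(|\nabla\psi|^2-|\partial_r\psi|^2\big)$. By definition of the tangential gradient, $|\nabla\psi|^2=|\partial_r\psi|^2+|\partial_\tau\psi|^2$, so this equals $\frac{|\partial_\tau\psi|^2}{r}$. For spinors the identity holds componentwise and is summed over components. No singularity issue arises here, since this is a pointwise identity away from the origin.

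For the bilaplacian, I would first compute $\Delta\phi_{\text{M}}=\phi''+\frac{d-1}{r}\phi'=\frac{d-1}{r}$ pointwise away from $0$. In the distributional sense this is correct in every $d\ge 2$, since $1/r$ is locally integrable and $\nabla r$ is bounded, so no delta appears at this stage. Next I apply $\Delta$ to $\frac{d-1}{r}$. For $d=3$, the function $\frac{1}{r}$ is the fundamental solution up to constants, with $\Delta\frac{1}{|x|}=-4\pi\delta_0$. This gives $\Delta^2\phi_{\text{M}}=2\cdot(-4\pi\delta_0)=-8\pi\delta_{x=0}$.

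For $d\geq4$, the radial formula with $f=r^{-1}$, $f'=-r^{-2}$, $f''=2r^{-3}$ gives
\begin{equation*}
\Delta r^{-1}=2r^{-3}-(d-1)r^{-3}=-(d-3)r^{-3}.
\end{equation*}
Therefore $\Delta^2\phi_{\text{M}}=-\frac{(d-1)(d-3)}{r^3}$. It remains to check that there is no distributional part at the origin. I would do this by the usual test-function argument: integrate $\frac{1}{r}\Delta\chi$ over $\{|x|>\varepsilon\}$ and apply Green's formula. The boundary terms are $O(\varepsilon^{d-2}\cdot\varepsilon^{-1})$ and $O(\varepsilon^{d-1}\cdot\varepsilon^{-2})$, which vanish as $\varepsilon\to0$ precisely when $d\ge4$. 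In the same range $r^{-3}$ is locally integrable.

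The only delicate point, and the main obstacle, is this distributional justification at the origin: the delta mass for $d=3$ and its absence for $d\ge4$. Everything else is routine calculus.
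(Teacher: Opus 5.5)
Your proposal is correct and follows the paper's route: the radial Hessian formula $\nabla\psi D^2\varphi\overline{\nabla\psi}=\varphi''|\partial_r\psi|^2+\tfrac{\varphi'}{r}|\partial_\tau\psi|^2$ with $\varphi'=1$, $\varphi''=0$, then $\Delta\phi_{\text{M}}=\tfrac{d-1}{r}$ followed by a second Laplacian, using the fundamental solution when $d=3$. Your Green's-formula check that $\Delta\phi_{\text{M}}$ has no singular part, and that no mass appears at the origin for $d\ge4$, just makes explicit what the paper calls ``an easy computation''. (The boundary term with $\partial_\nu\chi$ is actually $O(\varepsilon^{d-2})$, but your cruder bound still suffices.)
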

\begin{lemma}\label{lemma:after-multiplier-choice-ls}
Let $\phi_{\text{ls}}$ be the local smoothing multiplyier defined as in~\eqref{eq:phi1-ls}. Then one has 
\begin{align}
	\label{eq:plug-Hessian-ls}
	\nabla \psi D^2\phi_{\text{ls}} \overline{\nabla \psi}&=
	\tfrac{(d-1)}{2d} \tfrac{1}{R}|\nabla \psi|^2 \chi_{\{|x|\leq R\}} 
 + \Big[ \tfrac{(d-1)}{2d} \tfrac{R^{d-1}}{r^d} |\partial_r \psi|^2
+ \tfrac{1}{2d} \tfrac{R^{d-1}}{r^d} \Big(\tfrac{dr^{d-1}}{R^{d-1}} - 1\Big)|\partial_\tau \psi|^2
 \Big]\chi_{\{|x|> R\}},\\
 \label{eq:plug-bilaplacian-ls}
	\Delta^2 \phi_{\text{ls}}&=-\tfrac{(d-1)(d-3)}{2r^3}\chi_{[R,\infty)}-\tfrac{(d-1)}{2}\tfrac{1}{R^2}\delta_{|x|=R}.
\end{align}	
\end{lemma}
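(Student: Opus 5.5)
We compute directly with the radial formulas. For a radial function $\phi(r)$, the Hessian is
\begin{equation*}
D^2\phi=\phi''(r)\,\hat x\otimes\hat x+\frac{\phi'(r)}{r}\big(I-\hat x\otimes\hat x\big),\qquad \hat x=x/r .
\end{equation*}
Hence
\begin{equation*}
\nabla\psi D^2\phi\overline{\nabla\psi}=\phi''|\partial_r\psi|^2+\frac{\phi'}{r}|\partial_\tau\psi|^2,
\end{equation*}
where $|\partial_\tau\psi|^2=|\nabla\psi|^2-|\partial_r\psi|^2$. The Laplacian of a radial function is $\Delta\phi=\phi''+\frac{d-1}{r}\phi'$. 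We apply these formulas piecewise to $\phi_{\text{ls}}$ on $\{r<R\}$ and $\{r>R\}$. We then account for distributional contributions on the sphere $\{|x|=R\}$ coming from jumps in derivatives.

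\textbf{Hessian.} For $r\le R$ we have $\phi_{\text{ls}}'=\frac{d-1}{2d}\frac rR$, so $\phi_{\text{ls}}''=\phi_{\text{ls}}'/r=\frac{d-1}{2dR}$. The Hessian is therefore a multiple of the identity, giving $\frac{d-1}{2d}\frac1R|\nabla\psi|^2$.

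For $r>R$ we have $\phi_{\text{ls}}'=\frac12-\frac{R^{d-1}}{2dr^{d-1}}$. Differentiating,
\begin{equation*}
\phi_{\text{ls}}''=\frac{(d-1)R^{d-1}}{2d\,r^d},
\end{equation*}
and
\begin{equation*}
\frac{\phi_{\text{ls}}'}{r}=\frac{1}{2d}\frac{R^{d-1}}{r^d}\Big(\frac{dr^{d-1}}{R^{d-1}}-1\Big).
\end{equation*}
This matches \eqref{eq:plug-Hessian-ls}.

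Note that $\phi_{\text{ls}}'$ is continuous at $R$: both pieces equal $\frac{d-1}{2d}$ there. So $\phi_{\text{ls}}''$ has only a jump, the Hessian is in $L^\infty$, and no singular part arises on the sphere in this term.

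\textbf{Bilaplacian.} We compute $\Delta\phi_{\text{ls}}$ piecewise. For $r<R$,
\begin{equation*}
\Delta\phi_{\text{ls}}=\frac{d-1}{2dR}+\frac{(d-1)^2}{2dR}=\frac{d-1}{2R}.
\end{equation*}
For $r>R$,
\begin{equation*}
\Delta\phi_{\text{ls}}=\frac{(d-1)R^{d-1}}{2dr^d}+\frac{d-1}{2r}-\frac{(d-1)R^{d-1}}{2dr^d}=\frac{d-1}{2r}.
\end{equation*}
Thus $\Delta\phi_{\text{ls}}=\frac{d-1}{2}\min(1/R,1/r)$, which is continuous across $r=R$. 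This is the point of the choice of constants in \eqref{eq:phi1-ls}.

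Next we apply the radial Laplacian once more to $g(r):=\Delta\phi_{\text{ls}}$, in the distributional sense. Since $g$ is continuous and piecewise smooth, $\Delta g$ equals its pointwise value off $\{r=R\}$ plus a single layer $[g'](R)\,\delta_{|x|=R}$, where $[g']$ is the jump of the radial derivative. This follows from Green's formula on the two regions, with the boundary terms in $g$ cancelling by continuity.

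On $r<R$, $g$ is constant, so $\Delta g=0$. On $r>R$,
\begin{equation*}
\Delta\frac{d-1}{2r}=\frac{d-1}{2}\Big(\frac{2}{r^3}-\frac{d-1}{r^3}\Big)=-\frac{(d-1)(d-3)}{2r^3}.
\end{equation*}
The jump of $g'$ at $R$ is $-\frac{d-1}{2R^2}-0$. Together these give \eqref{eq:plug-bilaplacian-ls}.

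\textbf{Main obstacle.} The delicate step is justifying the distributional computation at $r=R$. We must check that $\phi_{\text{ls}}$, $\phi_{\text{ls}}'$ and $\Delta\phi_{\text{ls}}$ are all continuous, so that only the first-derivative jump of $\Delta\phi_{\text{ls}}$ produces a surface measure. The origin needs no care here, since $\phi_{\text{ls}}$ is smooth ($\sim r^2$) near $0$.

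To make this rigorous we pair with a test function $\eta$ and integrate by parts twice in $\int\phi_{\text{ls}}\Delta^2\eta$ on $\{r<R\}$ and $\{r>R\}$. The boundary terms involve $\phi$, $\partial_r\phi$, $\Delta\phi$ and $\partial_r\Delta\phi$. All of them cancel across the sphere except the one with $[\partial_r\Delta\phi]$, which yields exactly $-\frac{d-1}{2R^2}\int_{|x|=R}\eta\,d\sigma$.
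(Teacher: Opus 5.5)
Your proposal is correct and follows the same route as the paper: you apply the radial formulas $\nabla\psi D^2\varphi\overline{\nabla\psi}=\varphi''|\partial_r\psi|^2+\frac{\varphi'}{r}|\partial_\tau\psi|^2$ and $\Delta\varphi=\varphi''+\frac{d-1}{r}\varphi'$ piecewise with the explicit $\phi_{\text{ls}}''$, and you also spell out the distributional bookkeeping at $|x|=R$ (continuity of $\phi_{\text{ls}}'$ and $\Delta\phi_{\text{ls}}$, a single layer from the jump of $\partial_r\Delta\phi_{\text{ls}}$) that the paper leaves implicit. One small imprecision: $\phi_{\text{ls}}''$ does not actually jump at $R$, since both one-sided values equal $\frac{d-1}{2dR}$; hence $\phi_{\text{ls}}\in C^2$, which only makes your argument easier.
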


\begin{remark}\label{rem:lower-bound-hessian}
An immediate consequence of identity~\eqref{eq:plug-Hessian-ls} above is the following useful estimate
\begin{equation}\label{eq:rem:lower-bound-hessian}
	\nabla \psi D^2\phi_{\text{ls}} \overline{\nabla \psi}
	\geq \tfrac{(d-1)}{2d} \tfrac{1}{R}|\nabla \psi|^2 \chi_{\{|x|\leq R\}} 
	+ \tfrac{(d-1)}{2d} \tfrac{R^{d-1}}{r^d}|\nabla \psi|^2 \chi_{\{|x|>R\}}
	\geq \tfrac{(d-1)}{2d} \tfrac{1}{R}|\nabla \psi|^2 \chi_{\{|x|\leq R\}},
\end{equation}
where in the last inequality we have just discarded the second term being positive.
\end{remark}

We shall start with the proof of Lemma~\ref{lemma:after-multiplier-choice-M}.
\begin{proof}[Proof of Lemma~\ref{lemma:after-multiplier-choice-M}]
As a starting point we observe that, for any \emph{radial function} $\varphi,$ one has
\begin{equation}\label{eq:radial-computations}
	\nabla \psi D^2\varphi \overline{\nabla \psi}= \varphi'' |\partial_r \psi|^2 + \tfrac{\varphi'}{r} |\partial_\tau \psi|^2,
	\qquad	
	\Delta \varphi=\varphi'' + \tfrac{(d-1)}{r} \varphi'.
\end{equation}
Plugging $\phi_{\text{M}}$ in the first identity in~\eqref{eq:radial-computations}, then the first in~\eqref{eq:plug-Hessian-bilaplacian-M} is immediately obtained.

Now we prove the second identity in~\eqref{eq:plug-Hessian-bilaplacian-M}. Using the second identity in~\eqref{eq:radial-computations} one easily has $\Delta \phi_{\text{M}}=\tfrac{(d-1)}{r}.$
Due to the presence of the function $1/r$ in the previous expression for $\Delta \phi_{\text{M}},$ which represents the fundamental solution of the Laplacian in dimension $d=3,$ the cases $d=3$ and $d\geq 4$ are slightly different. An easy computation shows that
\begin{equation*}
	\Delta^2 \phi_{\text{M}}(r)=
	\begin{cases}
		-8\pi \delta_{x=0} \qquad &d=3 \\[6pt]
		-\tfrac{(d-1)(d-3)}{r^3}  &d\geq 4,
	\end{cases}
\end{equation*}
which is the thesis.
\end{proof}

We continue with the proof of Lemma~\ref{lemma:after-multiplier-choice-ls}.

\begin{proof}[Proof of Lemma~\ref{lemma:after-multiplier-choice-ls}]
The proof is similar to the proof of Lemma~\ref{lemma:after-multiplier-choice-M}. One just needs to use that 
\begin{equation}\label{eq:second-der-ls}
	\phi_{\text{ls}}''(r)=
	\begin{cases}
		\dfrac{(d-1)}{2d} \dfrac{1}{R}, \quad &r\leq R \vspace{0.2cm}\\
		 \dfrac{(d-1)}{2d} \dfrac{R^{d-1}}{r^{d}} &r>R.
	\end{cases}
\end{equation}
Then~\eqref{eq:plug-Hessian-ls} and~\eqref{eq:plug-bilaplacian-ls} are easily obtained. 
\end{proof}

We are finally ready to prove Proposition~\ref{prop:K-lower-bound}
\begin{proof}[Proof of Proposition~\ref{prop:K-lower-bound}]
	The lower bound~\eqref{eq:K-lower-bound} is simply obtained from the identities contained in Lemma~\ref{lemma:after-multiplier-choice-M} and Lemma~\ref{lemma:after-multiplier-choice-ls} and the estimate in Remark~\ref{rem:lower-bound-hessian}.
\end{proof}

\subsection{Potential term}
This section provide an estimate for the \emph{potential term}
\begin{equation}\label{eq:P-definition}
	P:= \frac{1}{2} \langle \psi, [V, L_\phi]\psi \rangle,
\end{equation}
which appears in the right-hand-side of~\eqref{eq:gen-id}, with $\phi$ the multiplier introduced in~\eqref{eq:multiplier-choice}.

From Lemma~\ref{lemma:potential-rewrite} and Remark~\ref{rem:Vphi-alternative}, one has 
\begin{equation*}
 	P
 	=\Re \langle [H_D, V] \psi, iG_\phi \psi \rangle 
 	-\Re \langle [iG_\phi, V]\psi, H_D \psi \rangle.
 	\\
\end{equation*}
Let us analyse both term on the right-hand-side separately. About the first, thanks to \eqref{eq:comm-HDV}  and  \eqref{eq:radial-comp-iG} one has 
\[
\begin{split}
\Re \langle [H_D, V] \psi, iG_\phi \psi \rangle
=&
\Re\langle 
-i [(\alpha \cdot \nabla) V]\psi 
-i[\alpha_j, V]\partial_j\psi 
+ m[\beta, V]\psi,
-\tfrac{1}{4}\left(\phi''+\tfrac{(d-1)}{r} \phi'\right)\psi 
-\tfrac{1}{2}\phi' \partial_r\psi
\rangle
\\
=&
-\tfrac{1}{2}\Re\langle 
-i [(\alpha \cdot \nabla) V]\psi,
\phi' \partial_r\psi
\rangle
-\tfrac{1}{4}\Re\langle 
-i[\alpha_j, V]\partial_j\psi,
\left(\phi''+\tfrac{(d-1)}{r} \phi'\right)\psi 
\rangle\\
&
-\tfrac{1}{2}\Re\langle 
-i[\alpha_j, V]\partial_j\psi,
\phi' \partial_r\psi
\rangle
-\tfrac{m}{2}\Re\langle 
[\beta, V]\psi,
\phi' \partial_r\psi
\rangle.
\end{split}
\]
where in the last equality we have used the fact that both operators $-i[(\alpha\cdot\nabla)V]$ and $[\beta,V]$ are skew-symmetric and that they commute with the symmetric operator $\left(\phi''+\tfrac{(d-1)}{r} \phi'\right)$.

About the second, thanks to \eqref{eq:comm-iGfV} one has
\[
\begin{split}
-\Re \langle [iG_\phi, V]\psi, H_D \psi \rangle&=
\tfrac{1}{2}\Re \langle \left(\phi'\partial_r V\right)\psi, -i\alpha\cdot\nabla \psi \rangle
 	 	+\tfrac{m}{2}\Re \langle \left(\phi'\partial_rV\right)\psi, \beta \psi \rangle
 	 	\\
 	&=\tfrac{1}{2}\Re \langle \left(\phi'\partial_r V\right)\psi, -i\alpha\cdot\nabla \psi \rangle
 	 	+\tfrac{m}{4} \langle \left(\phi'\partial_r\{V,\beta\}\right)\psi, \psi \rangle.
\end{split}
\]
where in the last equality we have used that both operator $\phi'\partial_r V$ and $\beta$ are symmetric and that $\beta$ is a constant matrix.

Summing up we have
\begin{equation}
 \begin{split}\label{eq:v-phi}
 	P
 	=&
 	-\tfrac{1}{2}\Re \langle -i [(\alpha \cdot \nabla) V] \psi,\phi'\partial_r \psi\rangle 
 	-
	\tfrac{1}{4}\Re \langle-i[\alpha_j, V]\partial_j \psi, (\phi''-\tfrac{d-1}{r}\phi') \psi\rangle
	-\tfrac{1}{2}	\Re \langle-i[\alpha_j, V]\partial_j \psi, \phi'\partial_r \psi\rangle\\
	&+\tfrac{1}{2}\Re \langle \left(\phi'\partial_r V\right)\psi, -i\alpha\cdot\nabla \psi \rangle
	+m\left(-\tfrac{1}{2} \Re \langle [\beta,V] \psi, \phi'\partial_r \psi \rangle 
 	 	+\tfrac{1}{4} \langle \left(\phi'\partial_r\{V,\beta\}\right)\psi, \psi \rangle\right).
 \end{split}
 \end{equation}

As a first step, we want to write more explicitly the terms appearing in this equation
\begin{lemma}
Let $P$ be the potential term defined in \eqref{eq:P-definition} with $\phi$ the multiplier defined in \eqref{eq:multiplier-choice} where $\phi_{\text{M}}(r)=r$ is the Morawetz multiplier and $\phi_{\text{ls}}(r)$ is the local smoothing multiplier defined in~\eqref{eq:phi1-ls}.
 Define
	\begin{equation}\label{eq:I1-I2-I3}
		I_1:= \int_{\R^d} |\nabla \psi| |\nabla V| |\psi|\, dx,
		\quad 
		I_2:=\int_{\R^d} |[\alpha_j, V]| |\nabla \psi| \tfrac{|\psi|}{|x|}\, dx,
		\quad 
		I_3:=\int_{\R^d} |[\alpha_j, V]| |\nabla \psi|^2\, dx,
	\end{equation}
	and in the case that the mass $m>0$ define also   
	\begin{equation}\label{eq:Im1-Im2}
	I_{m,1}:= \int_{\R^d} |[\beta, V]| |\psi| |\nabla \psi|\, dx.
	\quad
	I_{m,2}:=\int_{\R^d} |\partial_r \{V,\beta\}| |\psi|^2 \, dx,
		\quad 
	\end{equation} 
Then
\begin{equation}\label{eq:P-first-estimate}
	|P|\leq 
	\frac{3}{2}I_1+\frac{3(d-1)}{8} I_2+\frac{3}{4}I_3+m\left(
	\frac{3}{4} I_{m,1}+\frac{3}{8} I_{m,2}\right).
\end{equation}
\end{lemma}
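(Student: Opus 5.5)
We start from the explicit expression~\eqref{eq:v-phi} for $P$ and bound each of its six terms by Cauchy--Schwarz at the pointwise level. Everything then reduces to uniform $L^\infty$ bounds on the radial weights $\phi'$, $\phi''$ and $\phi'/r$ built from $\phi=\phi_{\text{M}}+\phi_{\text{ls}}$. The key observation, read off from~\eqref{eq:phi1-ls} and~\eqref{eq:second-der-ls}, is
\begin{equation*}
0\le \phi'(r)=1+\phi_{\text{ls}}'(r)\le \tfrac32,
\qquad
0\le \phi_{\text{ls}}'(r)\le \tfrac12 .
\end{equation*}
On $\{r\le R\}$ this follows from $\phi_{\text{ls}}'=\tfrac{d-1}{2d}\tfrac rR\le\tfrac12$. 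On $\{r>R\}$ it follows from $\phi_{\text{ls}}'=\tfrac12-\tfrac{R^{d-1}}{2dr^{d-1}}\in[0,\tfrac12)$.

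For the second derivative we have $\phi''=\phi_{\text{ls}}''$, since $\phi_{\text{M}}''=0$. By~\eqref{eq:second-der-ls}, for $r\le R$ we get $r\phi''=\tfrac{d-1}{2d}\tfrac rR\le\tfrac{d-1}{2d}$, and for $r>R$ we get $r\phi''=\tfrac{d-1}{2d}(R/r)^{d-1}\le \tfrac{d-1}{2d}$. Hence
\begin{equation*}
\Big|\phi''\pm\tfrac{d-1}{r}\phi'\Big|\le \frac{1}{r}\Big(\tfrac{d-1}{2d}+\tfrac{3(d-1)}{2}\Big)\le \frac{3(d-1)}{2}\cdot\frac{1}{r}\cdot c_d,
\end{equation*}
and the aim is to show the constant can be taken to be $\tfrac{3(d-1)}{2}$. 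This is the delicate point; I expect it to be the main obstacle. A crude sum gives $\tfrac{d-1}{2d}+\tfrac{3(d-1)}{2}$, slightly more than $\tfrac{3(d-1)}{2}$. To remove the excess I would exploit the signs. Both $\phi''$ and $\phi'/r$ are nonnegative, so $|\phi''-\tfrac{d-1}{r}\phi'|\le\max(\phi'',\tfrac{d-1}{r}\phi')\le \tfrac{d-1}{r}\cdot\tfrac32$, provided $r\phi''\le (d-1)\tfrac32$, which clearly holds. This works for the combination with a minus sign, which is the one written in~\eqref{eq:v-phi}. If the intended sign is instead the plus from~\eqref{eq:iGphi-computed}, I would use the sharper bound on $\phi'$ where $\phi''$ is large. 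For $r\le R$,
\begin{equation*}
r\phi''+(d-1)\phi'=\tfrac{d-1}{2d}\tfrac rR+(d-1)\Big(1+\tfrac{d-1}{2d}\tfrac rR\Big)\le (d-1)\Big(1+\tfrac12\Big).
\end{equation*}
The exterior region $r>R$ is analogous, since there $r\phi''+(d-1)\phi_{\text{ls}}'=\tfrac{d-1}{2}$ exactly.

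With these bounds the terms of~\eqref{eq:v-phi} are handled as follows.
\begin{itemize}
\item The first term satisfies $|\tfrac12\langle -i[(\alpha\cdot\nabla)V]\psi,\phi'\partial_r\psi\rangle|\le \tfrac12\cdot\tfrac32\,d^{1/2}\!\!\int|\nabla V||\psi||\nabla\psi|$. Here $|(\alpha\cdot\nabla)V|\le|\nabla V|$, with $|\nabla V|$ understood as the matrix-norm of the vector of derivatives (using $\alpha_j$ unitary). This gives $\tfrac34 I_1$.
\item The fourth term is bounded in the same way, using $|\alpha\cdot\nabla\psi|\le|\nabla\psi|$ and $|\partial_rV|\le|\nabla V|$, giving another $\tfrac34I_1$. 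Together these produce $\tfrac32 I_1$.
\item The second term gives $\tfrac14\cdot\tfrac{3(d-1)}{2}I_2=\tfrac{3(d-1)}{8}I_2$.
\item The third term gives $\tfrac12\cdot\tfrac32 I_3=\tfrac34 I_3$, reading $\sum_j[\alpha_j,V]\partial_j\psi$ with the norm convention implicit in~\eqref{eq:I1-I2-I3}.
\end{itemize}

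Finally, in the massive case we use $\phi'\le\tfrac32$ once more. The $[\beta,V]$ term contributes $\tfrac12\cdot\tfrac32 I_{m,1}=\tfrac34 I_{m,1}$, and the $\{V,\beta\}$ term contributes $\tfrac14\cdot\tfrac32 I_{m,2}=\tfrac38 I_{m,2}$. Summing all contributions yields~\eqref{eq:P-first-estimate}.
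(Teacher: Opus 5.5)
Your proposal is correct and is essentially the paper's argument: bound $\phi'\le\tfrac32$ and $|\phi''+\tfrac{d-1}{r}\phi'|\le\tfrac{3(d-1)}{2r}$, then substitute into \eqref{eq:v-phi} term by term. Your exact treatment of the second weight is precisely what yields the coefficient $\tfrac{3(d-1)}{8}$; the paper's final displayed bound $\tfrac{3(d-1)}{r}$ would only give $\tfrac{3(d-1)}{4}$, and the minus sign in \eqref{eq:v-phi} is a typo for the plus from \eqref{eq:iGphi-computed}, so your second case is the relevant one.

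The one soft spot, shared with the paper, is the pointwise use of $|\alpha\cdot\nabla\psi|\le|\nabla\psi|$ and $|(\alpha\cdot\nabla)V|\le|\nabla V|$. These hold if gradients are measured as $\sum_j|\partial_j\,\cdot\,|$, in which case you should drop the stray $d^{1/2}$ in your first bullet. With Euclidean norms, however, they can cost a factor $\sqrt d$, since $\|\alpha\cdot\nabla\psi\|_{L^2}=\|\nabla\psi\|_{L^2}$ is only an integrated identity.
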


\begin{proof}
The proof proceeds by direct computation of the derivatives of $\phi$. Indeed, by \eqref{eq:second-der-ls} one has
\begin{equation}\label{eq:estimate-phiprime-phipprime}
\begin{split}
|\phi'|&=
\left(1+\frac{(d-1)}{2d} \frac{r}{R}\right)
\chi_{\{|x|\leq R\}}+
\left(\frac{3}{2}- \frac{R^{d-1}}{2d r^{d-1}}\right)
\chi_{\{|x|> R\}}
\leq
\frac{3d-1}{2d}\chi_{\{|x|> R\}} 
+\frac{3}{2}\chi_{\{|x|> R\}} \leq \frac{3}{2},
\\
\left|\phi''+\tfrac{d-1}{r}\phi'\right|&=
\left(
\frac{d-1}{2dR}
+\frac{d-1}{r}\left(1+\frac{(d-1)}{2d} \frac{r}{R}\right)
\right)
\chi_{\{|x|\leq R\}}+
\left(\frac{d-1}{2d} \frac{R^{d-1}}{r^{d}}+\frac{d-1}{r}\left( 
\frac{3}{2}- \frac{R^{d-1}}{2d r^{d-1}}
\right)
\right)
\chi_{\{|x|> R\}}\\
&
= \left(\frac{d-1}{r}
+\frac{d-1}{2R}
\right)\chi_{\{|x|\leq R\}}+
\left(
\frac{3(d-1)}{2r}\right)\chi_{\{|x|\leq R\}}
\\
&\leq 
\frac{3(d-1)}{r}\chi_{\{|x|\leq R\}}
+ \frac{3(d-1)}{r}\chi_{\{|x|> R\}}
\leq \frac{3(d-1)}{r}.
\end{split}
\end{equation}
Substituting these estimates into \eqref{eq:v-phi} yields \eqref{eq:P-first-estimate}.
\end{proof} 

We conclude this section with a final estimate for the potential term $P$ defined in~\eqref{eq:P-definition}. More precisely, by \eqref{eq:P-first-estimate}, it suffices to estimate the terms in \eqref{eq:I1-I2-I3} and, when $m>0$, in \eqref{eq:Im1-Im2}.
We provide two different bounds according to the dimension, treating first the higher-dimensional case $d\geq 4$ and then $d=3$.
\begin{proposition}\label{proposition:P-bound-higher}
Let $d\geq 4,$ $m>0$ and let $P$ the potential term defined in~\eqref{eq:P-definition}. Assume that hypotheses~\eqref{eq:hyp-higher-m} hold. Then one has 
\begin{multline}\label{eq:final-P-higher}
|P|	\leq \left(
	\frac{3}{4}C_1c_\epsilon+\frac{3(d-1)}{16}C_2c_\epsilon
	+\frac{3}{2}C_2 (c_{\epsilon/2})^2+\frac{3}{8}m C_4 c_\epsilon
	\right)\vertiii{\nabla\psi}^2\\
	+
	\left(
	\frac{3}{4}C_1c_\epsilon+\frac{3(d-1)}{16}C_2c_\epsilon
	+\frac{3}{8}m C_4 c_\epsilon+\frac{3}{8}m C_3 
	\right)\|\psi\|^2_{L^2(|x|^{-3},dx)}.
\end{multline}
%
where $C_1,C_2, C_3, C_4$ are the constants appearing in~\eqref{eq:hyp-higher-m} and $c_\varepsilon$ is defined in~\eqref{eq:ceps}.
\end{proposition}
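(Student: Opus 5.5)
The plan is to start from \eqref{eq:P-first-estimate}, which gives
\[
|P|\le \tfrac32 I_1+\tfrac{3(d-1)}8 I_2+\tfrac34 I_3+m\bigl(\tfrac34 I_{m,1}+\tfrac38 I_{m,2}\bigr).
\]
Each integral will be bounded separately. The pointwise hypotheses~\eqref{eq:hyp-higher-m} are used to split the weight into two factors, one adapted to the Morrey--Campanato norm via~\eqref{eq:weighted-to-Morrey} and one adapted to the weight $|x|^{-3}$. The elementary inequality $ab\le \tfrac12(a^2+b^2)$ then produces the symmetric coefficients $\tfrac34 C_1c_\epsilon$ and so on, which appear in both brackets of~\eqref{eq:final-P-higher}.

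For $I_1$ we use $|\nabla V|\le C_1(|x|^{2-\epsilon}+|x|^{2+\epsilon})^{-1}$. The elementary bound
\[
|x|^{2-\epsilon}+|x|^{2+\epsilon}\ \ge\ |x|^{3/2}\bigl(|x|^{1/2-\epsilon}+|x|^{1/2+\epsilon}\bigr)
\]
holds with equality. Hence by Cauchy--Schwarz
\[
I_1\le C_1\Big\|\frac{\nabla\psi}{|x|^{1/2-\epsilon}+|x|^{1/2+\epsilon}}\Big\|_{L^2}\,\|\psi\|_{L^2(|x|^{-3})}\le C_1c_\epsilon\vertiii{\nabla\psi}\,\|\psi\|_{L^2(|x|^{-3})},
\]
and Young's inequality gives $\tfrac32 I_1\le \tfrac34C_1c_\epsilon(\vertiii{\nabla\psi}^2+\|\psi\|^2_{L^2(|x|^{-3})})$. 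Here $\epsilon<1/2$ is needed so that~\eqref{eq:weighted-to-Morrey} applies with $\delta=\epsilon$. The term $I_2$ is handled identically: $|[\alpha_j,V]|/|x|\le C_2(|x|^{2-\epsilon}+|x|^{2+\epsilon})^{-1}$, which yields $\tfrac{3(d-1)}{16}C_2c_\epsilon$ in each bracket. The same applies to $I_{m,1}$ with $C_4$, giving $\tfrac38 mC_4c_\epsilon$. For $I_{m,2}$ the hypothesis $|(x\cdot\nabla)\{V,\beta\}|\le C_3|x|^{-2}$ means $|\partial_r\{V,\beta\}|\le C_3|x|^{-3}$, so $\tfrac38 mI_{m,2}\le \tfrac38 mC_3\|\psi\|^2_{L^2(|x|^{-3})}$ directly. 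This contribution enters only the second bracket, as in the statement.

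The step requiring most care is $I_3$, which is quadratic in $\nabla\psi$. Here we write
\[
|[\alpha_j,V]|\le C_2\,(|x|^{1-\epsilon}+|x|^{1+\epsilon})^{-1}\le 2C_2\,\bigl(|x|^{1/2-\epsilon/2}+|x|^{1/2+\epsilon/2}\bigr)^{-2}.
\]
The second inequality follows from $(a+b)^2\le 2(a^2+b^2)$ with $a=|x|^{1/2-\epsilon/2}$ and $b=|x|^{1/2+\epsilon/2}$. Applying~\eqref{eq:weighted-to-Morrey} with $\delta=\epsilon/2$ then gives $I_3\le 2C_2(c_{\epsilon/2})^2\vertiii{\nabla\psi}^2$, hence $\tfrac34I_3\le\tfrac32C_2(c_{\epsilon/2})^2\vertiii{\nabla\psi}^2$. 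This matches the coefficient in~\eqref{eq:final-P-higher}. Summing all contributions yields the claim.

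Throughout, I would check that the index $j$ is summed consistently, since $|[\alpha_j,V]|$ is bounded uniformly in $j$ and $\sum_j|\partial_j\psi|$ is controlled by $|\nabla\psi|$ up to the convention already used in~\eqref{eq:P-first-estimate}.
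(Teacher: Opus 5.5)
Your proposal is correct and follows the paper's proof essentially line by line. You start from \eqref{eq:P-first-estimate}, split the weights so that \eqref{eq:weighted-to-Morrey} applies (with $\delta=\epsilon$ for $I_1,I_2,I_{m,1}$ and $\delta=\epsilon/2$ for $I_3$) alongside the $|x|^{-3}$ weight, bound $I_{m,2}$ directly, and conclude with $ab\le\tfrac12(a^2+b^2)$. Your remark that taking $\delta=\epsilon$ in \eqref{eq:weighted-to-Morrey} requires $\epsilon<1/2$ is well taken, and it applies equally to the paper's own argument, even though Theorem~\ref{thm:higher-dim-massive} is stated for $\epsilon\in(0,1)$.
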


From the previous result it follows immediately the corresponding result in the massless case.
\begin{proposition}\label{proposition:P-bound-higher-mzero}
Let $d\geq 4,$ $m=0$ and let $P$ the potential term defined in~\eqref{eq:P-definition}. Assume that hypotheses~\eqref{eq:hyp-higher-mzero} hold. Then one has 
 
\begin{multline}\label{eq:final-P-higher-mzero}
|P|	\leq \left(
	\frac{3}{4}C_1c_\epsilon+\frac{3(d-1)}{16}C_2c_\epsilon
	+\frac{3}{2}C_2 (c_{\epsilon/2})^2
	\right)\vertiii{\nabla\psi}^2
	+
	\left(
	\frac{3}{4}C_1c_\epsilon+\frac{3(d-1)}{16}C_2c_\epsilon
	\right)\|\psi\|^2_{L^2(|x|^{-3},dx)}.
\end{multline}

where $C_1$ and $C_2$ are the constants appearing in~\eqref{eq:hyp-higher-mzero} and $c_\varepsilon$ is defined in~\eqref{eq:ceps}.
\end{proposition}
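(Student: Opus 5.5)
The massless estimate \eqref{eq:final-P-higher-mzero} is the special case $m=0$ of Proposition~\ref{proposition:P-bound-higher}, so I would deduce it from that result rather than redo the analysis. There is one bookkeeping point. In the bound \eqref{eq:P-first-estimate} every mass-dependent contribution enters only through the combination $m\big(\tfrac34 I_{m,1}+\tfrac38 I_{m,2}\big)$. For $m=0$ this vanishes identically: the terms $m[\beta,V]$ in \eqref{eq:comm-HDV} and $\tfrac m4\langle(\phi'\partial_r\{V,\beta\})\psi,\psi\rangle$ in \eqref{eq:v-phi} are absent. Hence the hypotheses on $[\beta,V]$ and $(x\cdot\nabla)\{V,\beta\}$ in \eqref{eq:hyp-higher-m} are never used, and the assumptions \eqref{eq:hyp-higher-mzero} on $\nabla V$ and $[\alpha_j,V]$ alone suffice.

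Concretely, I would write $|P|\le \tfrac32 I_1+\tfrac{3(d-1)}8 I_2+\tfrac34 I_3$ and re-run the three estimates from the proof of Proposition~\ref{proposition:P-bound-higher}; only $C_1$ and $C_2$ enter them. In order:

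\begin{itemize}
\item \textbf{$I_1$.} Write $|\nabla V|\le C_1(|x|^{2-\epsilon}+|x|^{2+\epsilon})^{-1}$ and factor the weight as a product $w_1(x)w_2(x)$, pairing $w_1$ with $|\nabla\psi|$ and $w_2$ with $|\psi|$. Choose $w_1$ comparable to $(|x|^{1/2-\epsilon/2}+|x|^{1/2+\epsilon/2})^{-1}$, so that its term is controlled by $c_\epsilon\vertiii{\nabla\psi}$ via \eqref{eq:weighted-to-Morrey}. The remaining factor is then at most $|x|^{-3/2}|\psi|$. Young's inequality $ab\le\tfrac12(a^2+b^2)$ gives $\tfrac32 I_1\le \tfrac34C_1c_\epsilon\big(\vertiii{\nabla\psi}^2+\|\psi\|^2_{L^2(|x|^{-3})}\big)$.
\item \textbf{$I_2$.} The same argument with the weight $|x|^{-1}(|x|^{1-\epsilon}+|x|^{1+\epsilon})^{-1}$ gives the $\tfrac{3(d-1)}{16}C_2c_\epsilon$ contributions to both terms.
\item \textbf{$I_3$.} Use \eqref{eq:weighted-to-Morrey} directly with $\delta=\epsilon/2$. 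This works because $(|x|^{1-\epsilon}+|x|^{1+\epsilon})^{-1}\le 2(|x|^{1/2-\epsilon/2}+|x|^{1/2+\epsilon/2})^{-2}$, which gives $\tfrac34 I_3\le\tfrac32C_2(c_{\epsilon/2})^2\vertiii{\nabla\psi}^2$.
\end{itemize}

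The main obstacle is matching the precise constants, in particular the pointwise comparisons between the weights $(|x|^{a-\epsilon}+|x|^{a+\epsilon})$ and products of the half-power weights used in $I_1$ and $I_2$. This is the step where the $\epsilon\in(0,1/2)$ restriction matters, since \eqref{eq:weighted-to-Morrey} requires $\delta<1/2$. Since these are exactly the comparisons already carried out in the massive proof, I would simply set $m=0$ there and check that no step divided by $m$ or used the $C_3,C_4$ hypotheses.
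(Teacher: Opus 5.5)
Your route is the same as the paper's. The paper just says that \eqref{eq:final-P-higher-mzero} follows from Proposition~\ref{proposition:P-bound-higher} with $m=0$. Your bookkeeping is right: the $m$-terms in \eqref{eq:P-first-estimate} drop out, and the $C_3,C_4$ hypotheses are never used. Your $I_3$ step is correct, and $I_2$ reduces to $I_1$ because the two weights coincide.

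There is a slip in the $I_1$ step, and it carries over to $I_2$. The weight factorizes exactly as
\[
\frac{1}{|x|^{2-\epsilon}+|x|^{2+\epsilon}}=\frac{1}{|x|^{1/2-\epsilon}+|x|^{1/2+\epsilon}}\cdot\frac{1}{|x|^{3/2}},
\]
and \eqref{eq:weighted-to-Morrey} with $\delta=\epsilon$ then gives $c_\epsilon\vertiii{\nabla\psi}$. This is where $\epsilon<1/2$ is actually needed.

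You wrote the half-exponent weight $(|x|^{1/2-\epsilon/2}+|x|^{1/2+\epsilon/2})^{-1}$ instead. The leftover factor is still $\le |x|^{-3/2}$, but \eqref{eq:weighted-to-Morrey} then yields $c_{\epsilon/2}$. By \eqref{eq:ceps}, $c_\delta$ is strictly decreasing in $\delta$, so $c_{\epsilon/2}$ is strictly larger than $c_\epsilon$. You would therefore not recover the constants $\tfrac34 C_1c_\epsilon$ and $\tfrac{3(d-1)}{16}C_2c_\epsilon$. That version would also not need $\epsilon<1/2$, which conflicts with your own closing remark.

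With the exact factorization above, the rest of your argument goes through unchanged.
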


Now we are in position to prove Proposition~\ref{proposition:P-bound-higher}.
\begin{proof}[Proof of Proposition~\ref{proposition:P-bound-higher}]
In view of \eqref{eq:P-first-estimate}, it suffices to estimate the terms in~\eqref{eq:I1-I2-I3} and~\eqref{eq:Im1-Im2}.

	
	Let us start estimating $I_1.$ Using  the assumption~\eqref{eq:hyp-higher-m} and then the Cauchy-Schwarz inequality, one has 
\begin{equation*}
\begin{split}
	I_1&= \int_{\R^d} |\nabla \psi| |\nabla V| |\psi|\, dx
	\leq C_1 \int_{\R^d} \frac{|\nabla\psi|\,|\psi|}{|x|^{2-\epsilon}+|x|^{2+\epsilon}}\, dx
= C_1 \int_{\R^d} \frac{|\nabla \psi|}{|x|^{1/2-\varepsilon} + |x|^{1/2+ \varepsilon}} \frac{|\psi|}{|x|^{3/2}}\, dx\\ 
&\leq C_1 \Big(\int_{\R^d} \frac{|\psi|^2}{|x|^3}\, dx\Big)^{1/2} \Big( \int_{\R^d} \frac{|\nabla \psi|^2}{(|x|^{1/2-\varepsilon} + |x|^{1/2+ \varepsilon})^2}\, dx\Big)^{1/2}\\
&\leq C_1\, c_\varepsilon \vertiii{\nabla \psi} \Big(\int_{\R^d} \frac{|\psi|^2}{|x|^3}\, dx\Big)^{1/2},
\end{split}
\end{equation*}	
where in the last estimate we have used~\eqref{eq:weighted-to-Morrey}.

Let us continue with $I_2$. Using hypothesis~\eqref{eq:hyp-higher-m} one gets 
\begin{equation*}
\begin{split}
I_2&=\int_{\R^d} |[\alpha_j, V]| |\nabla \psi| \tfrac{|\psi|}{|x|}\, dx
\leq C_2 \int_{\R^d} \frac{|\nabla \psi|}{(|x|^{1/2-\varepsilon} + |x|^{1/2+\varepsilon})} \frac{|\psi|}{|x|^{3/2}}\, dx\\
&\leq C_2\, c_\varepsilon \vertiii{\nabla \psi} \Big(\int_{\R^d} \frac{|\psi|^2}{|x|^3}\, dx\Big)^{1/2}.
\end{split}	
\end{equation*}
Now we estimate $I_3.$ Once again by \eqref{eq:hyp-higher-m} and by the Cauchy-Schwartz inequality
\begin{equation*}
\begin{split}
	I_3&=\int_{\R^d} |[\alpha_j,V]| |\nabla \psi|^2\, dx 
	\leq C_2\, \int_{\R^d}\frac{|\nabla \psi|^2}{|x|^{1-\varepsilon} + |x|^{1+\varepsilon}} \, dx\leq 2 C_2\, \int_{\R^d}\frac{|\nabla \psi|^2}{(|x|^{1/2-\varepsilon/2} + |x|^{1/2+\varepsilon/2})^2} \, dx\\
	&\leq 2 C_2\,  (c_{\varepsilon/2})^2  \vertiii{\nabla \psi}^2,
	\end{split}
\end{equation*}
where in the last inequality we have used again~\eqref{eq:weighted-to-Morrey}.

Now we  estimate the terms which depend on the mass $m.$ The term $I_{m,1}$ is essentially the term $I_1$ with $|\nabla V|$ replaced by $|[\beta, V]|$, both satisfying the same decay estimates in~\eqref{eq:hyp-higher-m}. Then, reasoning as above,  one has
\begin{equation*}
I_{m,1} \leq C_4 c_\epsilon\vertiii{\nabla \psi}\left(\int_{\R^d}  \frac{|\psi|^2}{|x|^3}\,dx\right)^{1/2}.
\end{equation*}
Finally, using hypothesis \eqref{eq:hyp-higher-m} one can estimate $I_{m,2}$ as follows:
\begin{equation*}
	I_{m,2}=\int_{\R^d} |\partial_r \{V,\beta\}||\psi|^2\, dx
	\leq C_3 \int_{\R^d} \frac{|\psi|^2}{|x|^3}\, dx.
\end{equation*}
Gathering all the above estimates together with~\eqref{eq:P-first-estimate} and the elementary inequality $|ab|\leq \tfrac{a^2}{2}+\tfrac{b^2}{2}$ gives the thesis.
\end{proof}

We shall give now the corresponding bounds in $d=3,$ as for the higher dimensional case, we shall state first the result in the massive case, as the result for $m=0$ then follows immediately from this.

\begin{proposition}\label{proposition:P-bound-3d}
Let $d=3,$ $m>0$ and let $P$ the potential term defined in~\eqref{eq:P-definition}. Assume that hypotheses~\eqref{eq:hyp-higher-m-3d} hold. Then one has 
\begin{multline}\label{eq:final-P-3d}
|P|\leq 
\left(
	\frac{3\sqrt{2}}{4}C_1(c_{\epsilon/2})^2
	+\frac{3\sqrt{2}+12}{8}C_2(c_{\epsilon/2})^2
	+\frac{3\sqrt{2}\, m}{8} C_4(c_{\epsilon/2})^2
	\right)\vertiii{\nabla\psi}^2
	\\
	+
	\left(
	\frac{3\sqrt{2}}{4}C_1(c_{\epsilon/2})^2
	+\frac{3\sqrt{2}}{8}C_2(c_{\epsilon/2})^2
	+\frac{3\sqrt{2}\, m}{8} C_4(c_{\epsilon/2})^2
	+\frac{3 m}{8} C_3(c_{\epsilon/2})^2
	\right)
	\|\psi\|_R^2,
\end{multline}
where $C_1,C_2, C_3, C_4$ are the constants appearing in~\eqref{eq:hyp-higher-m} and $c_\varepsilon$ is defined in~\eqref{eq:ceps}.
\end{proposition}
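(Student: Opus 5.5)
Starting from \eqref{eq:P-first-estimate}, I would bound each of $I_1,I_2,I_3,I_{m,1},I_{m,2}$ by the Morrey--Campanato norm $\vertiii{\nabla\psi}$ and the spherical norm $\|\psi\|_R$. In $d=3$ the weighted norm $\|\psi\|_{L^2(|x|^{-3},dx)}$ used in Proposition~\ref{proposition:P-bound-higher} is unavailable, because the Morawetz term $\int|\psi|^2|x|^{-3}$ vanishes in the kinetic lower bound of Proposition~\ref{prop:K-lower-bound} when $d=3$. The substitute is \eqref{eq:weighted-to-spherical}, which controls $|\psi|^2$ against the weight $(|x|^{3/2-\delta}+|x|^{3/2+\delta})^{-2}$ by $\tfrac{c_\delta^2}{2}\|\psi\|_R^2$. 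I will use it with $\delta=\epsilon/2$, together with \eqref{eq:weighted-to-Morrey} at the same $\delta=\epsilon/2$. This explains why only $(c_{\epsilon/2})^2$ appears in \eqref{eq:final-P-3d}.

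The key step is to split each decay weight into a Morrey part and a spherical part. Since $\epsilon<1$, for $t=|x|$ I expect an elementary bound of the form
\[
\frac{1}{t^{2-\epsilon}+t^{2+\epsilon}}\le \frac{c}{(t^{1/2-\epsilon/2}+t^{1/2+\epsilon/2})(t^{3/2-\epsilon/2}+t^{3/2+\epsilon/2})},
\]
with $c$ a harmless constant (possibly $1$ or $2$). To check it, expand the product on the right to get $t^{2-\epsilon}+2t^2+t^{2+\epsilon}$, and note that $t^2\le \tfrac12(t^{2-\epsilon}+t^{2+\epsilon})$ by AM--GM. For $I_1$ and $I_{m,1}$, Cauchy--Schwarz then gives a bound $C\,c_{\epsilon/2}\vertiii{\nabla\psi}\cdot \tfrac{c_{\epsilon/2}}{\sqrt2}\|\psi\|_R$. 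Similarly, for $I_2$ the weight $|x|^{-1}(|x|^{1-\epsilon}+|x|^{1+\epsilon})^{-1}$ equals $(|x|^{2-\epsilon}+|x|^{2+\epsilon})^{-1}$ exactly. Applying Young's inequality $ab\le \tfrac{a^2+b^2}{2}$ to the product $\vertiii{\nabla\psi}\|\psi\|_R$ then distributes the constants, and I expect the $\sqrt2$ factors appearing in \eqref{eq:final-P-3d} to come from the $1/\sqrt2$ in \eqref{eq:weighted-to-spherical}.

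The term $I_3$ is treated exactly as in the proof of Proposition~\ref{proposition:P-bound-higher}, giving $2C_2(c_{\epsilon/2})^2\vertiii{\nabla\psi}^2$; multiplied by $\tfrac34$, this produces the extra $\tfrac{12}{8}C_2$. For $I_{m,2}$, the hypothesis on $(x\cdot\nabla)\{V,\beta\}$ gives $|\partial_r\{V,\beta\}|\le C_3(|x|^{3-\epsilon}+|x|^{3+\epsilon})^{-1}$. I would then bound this weight by $(|x|^{3/2-\epsilon/2}+|x|^{3/2+\epsilon/2})^{-2}$, using the same expansion as above in reverse: the product expands to $t^{3-\epsilon}+2t^3+t^{3+\epsilon}$, which is at most $2(t^{3-\epsilon}+t^{3+\epsilon})$. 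Applying \eqref{eq:weighted-to-spherical} then yields a term of size $C_3(c_{\epsilon/2})^2\|\psi\|_R^2$.

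Finally, I would collect the five estimates in \eqref{eq:P-first-estimate} with the coefficients $\tfrac32$, $\tfrac{3(d-1)}{8}=\tfrac34$, $\tfrac34$, $\tfrac{3m}{4}$ and $\tfrac{3m}{8}$. The main obstacle is purely bookkeeping: getting the numerical constants to match \eqref{eq:final-P-3d} exactly. This depends on the precise constants in the weight comparisons, namely whether a factor $2$ or $\tfrac12$ arises from $t^{a-\epsilon}+t^{a+\epsilon}$ versus the squared sums. I would fix these comparisons first, then choose the Young weights so that the splits of $I_1$, $I_2$ and $I_{m,1}$ into the $\vertiii{\nabla\psi}^2$ and $\|\psi\|_R^2$ parts come out symmetric, as the statement suggests. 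If the constants come out slightly different, they are absorbed in the same way in the subsequent smoothing argument.
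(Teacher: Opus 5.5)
Your proposal is correct and follows essentially the paper's proof. Starting from \eqref{eq:P-first-estimate}, the paper splits the weight $(|x|^{2-\epsilon}+|x|^{2+\epsilon})^{-1}$ with exactly the factor $c=2$ that your expansion plus AM--GM gives (sharp at $|x|=1$), applies Cauchy--Schwarz with \eqref{eq:weighted-to-Morrey} and \eqref{eq:weighted-to-spherical} at $\delta=\epsilon/2$ to get $I_1,I_2,I_{m,1}\le\sqrt{2}\,C\,(c_{\epsilon/2})^2\vertiii{\nabla\psi}\,\|\psi\|_R$, handles $I_3$ and $I_{m,2}$ just as you do, and closes with $ab\le\frac{a^2+b^2}{2}$; with $c=2$ these steps reproduce the constants in \eqref{eq:final-P-3d} exactly, so your fallback about constants ``slightly different'' is not needed.
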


From the previous result it follows immediately the corresponding result in the massless case.
\begin{proposition}
Let $d=3,$ $m=0$ and let $P$ the potential term defined in~\eqref{eq:P-definition}. Assume that hypotheses~\eqref{eq:hyp-higher-mzero-3d} hold. Then one has 
\begin{equation}\label{eq:final-P-3d-massless}
|P|\leq 
\left(
	\frac{3\sqrt{2}}{4}C_1(c_{\epsilon/2})^2
	+\frac{3\sqrt{2}+12}{8}C_2(c_{\epsilon/2})^2
	\right)\vertiii{\nabla\psi}^2
	+
	\left(
	\frac{3\sqrt{2}}{4}C_1(c_{\epsilon/2})^2
	+\frac{3\sqrt{2}}{8}C_2(c_{\epsilon/2})^2
	\right)
	\|\psi\|_R^2,
\end{equation}
where $C_1$ and $C_2$ are the constants appearing in~\eqref{eq:hyp-higher-m} and $c_\varepsilon$ is defined in~\eqref{eq:ceps}.
\end{proposition}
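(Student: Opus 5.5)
The massless estimate is the case $m=0$ of Proposition~\ref{proposition:P-bound-3d}. I would argue it directly from the general bound~\eqref{eq:P-first-estimate}: when $m=0$ the mass terms $I_{m,1}$ and $I_{m,2}$ drop out, so $|P|\leq \tfrac32 I_1+\tfrac{3(d-1)}{8}I_2+\tfrac34 I_3$ with $d=3$. This gives $|P|\le \tfrac32 I_1+\tfrac34 I_2+\tfrac34 I_3$, and the task is to bound $I_1,I_2,I_3$ under~\eqref{eq:hyp-higher-mzero-3d} by $\vertiii{\nabla\psi}^2$ and $\|\psi\|_R^2$ only. The weight $|x|^{-3}$ is not available here, since the kinetic lower bound~\eqref{eq:K-lower-bound} loses its $|x|^{-3}$ term when $d=3$.

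First, $I_3$ is handled exactly as in the proof of Proposition~\ref{proposition:P-bound-higher}. Using $|x|^{1-\epsilon}+|x|^{1+\epsilon}\ge \tfrac12(|x|^{1/2-\epsilon/2}+|x|^{1/2+\epsilon/2})^2$ and~\eqref{eq:weighted-to-Morrey}, we get $I_3\le 2C_2(c_{\epsilon/2})^2\vertiii{\nabla\psi}^2$. This produces the term $\tfrac{12}{8}C_2(c_{\epsilon/2})^2$ in the coefficient of $\vertiii{\nabla\psi}^2$.

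For $I_1$, I would split the weight symmetrically. One has $(|x|^{2-\epsilon}+|x|^{2+\epsilon})\ge \tfrac12(|x|^{1/2-\epsilon/2}+|x|^{1/2+\epsilon/2})(|x|^{3/2-\epsilon/2}+|x|^{3/2+\epsilon/2})$, up to a harmless numerical constant that I would check; the expansion gives $|x|^{2-\epsilon}+2|x|^2+|x|^{2+\epsilon}\le 2(|x|^{2-\epsilon}+|x|^{2+\epsilon})$ by AM--GM. Cauchy--Schwarz then pairs $\nabla\psi$ with the Morrey weight~\eqref{eq:weighted-to-Morrey} and $\psi$ with the spherical weight~\eqref{eq:weighted-to-spherical}, both at parameter $\epsilon/2$. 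This gives $I_1\le \sqrt2\,C_1 (c_{\epsilon/2})^2\vertiii{\nabla\psi}\,\|\psi\|_R$; the factor $\sqrt2$ comes from the $1/\sqrt2$ in~\eqref{eq:weighted-to-spherical} together with the factor $2$ from the weight comparison. The term $I_2$ is treated in the same way: $|x|\,(|x|^{1-\epsilon}+|x|^{1+\epsilon})=|x|^{2-\epsilon}+|x|^{2+\epsilon}$, so $I_2\le \sqrt2\,C_2(c_{\epsilon/2})^2\vertiii{\nabla\psi}\|\psi\|_R$.

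Finally, $ab\le \tfrac12 a^2+\tfrac12 b^2$ turns $\tfrac32 I_1$ into $\tfrac{3\sqrt2}{4}C_1(c_{\epsilon/2})^2$ times each of $\vertiii{\nabla\psi}^2$ and $\|\psi\|_R^2$. Likewise $\tfrac34 I_2$ becomes $\tfrac{3\sqrt2}{8}C_2(c_{\epsilon/2})^2$ times each norm. Collecting these with the $I_3$ bound gives exactly~\eqref{eq:final-P-3d-massless}. The only delicate point is the elementary weight factorization inequality and keeping track of its constant; the rest is a repetition of the higher-dimensional argument.
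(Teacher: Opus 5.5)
Your proposal is correct and is essentially the paper's own argument. The paper obtains this bound as the case $m=0$ of Proposition~\ref{proposition:P-bound-3d}, whose proof is exactly your chain: \eqref{eq:P-first-estimate} with $d=3$; the weight comparison with the factor $2$ (which you verified by AM--GM), followed by Cauchy--Schwarz and \eqref{eq:weighted-to-Morrey}, \eqref{eq:weighted-to-spherical} at parameter $\epsilon/2$ for $I_1$ and $I_2$; the bound $I_3\le 2C_2(c_{\epsilon/2})^2\vertiii{\nabla\psi}^2$; and $ab\le \tfrac12 a^2+\tfrac12 b^2$. Working directly from \eqref{eq:P-first-estimate} with $m=0$, as you do, also has the small advantage of making explicit that no hypotheses on $\{V,\beta\}$ or $[\beta,V]$ are needed.
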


We shall prove now Proposition~\ref{proposition:P-bound-3d}.
\begin{proof}[Proof of Proposition~\ref{proposition:P-bound-3d}]
As in the proof of Proposition~\ref{proposition:P-bound-higher},
in view of \eqref{eq:P-first-estimate}, it suffices to estimate the terms in~\eqref{eq:I1-I2-I3} and~\eqref{eq:Im1-Im2}. 

We begin with $I_1$. By~\eqref{eq:hyp-higher-m-3d} and the Cauchy-Schwartz inequality, one has
\begin{equation*}
\begin{split}
	I_1
	&=
	\int_{\R^d} |\nabla \psi| |\nabla V| |\psi|\,dx
	\leq C_1\int_{\R^d}	\frac{|\nabla\psi|\,|\psi|}{|x|^{2-\epsilon}+|x|^{2+\epsilon}}\,dx\\
	& \leq 2C_1\left(\int_{\R^d}	
	\frac{|\nabla\psi|^2}{\left(|x|^{1/2-\epsilon/2}+|x|^{1/2+\epsilon/2}\right)^2}\,dx \right)^{1/2}
	 \left(\int_{\R^d}	\frac{|\psi|^2}{\left(|x|^{3/2-\epsilon/2}+|x|^{3/2+\epsilon/2}\right)^2}\,dx \right)^{1/2}
			\\
	&\leq \sqrt{2}
	C_1(c_{\epsilon/2})^2\vertiii{\nabla\psi}
	\,{\|\psi\|}_R
\end{split}
\end{equation*}
where in the last estimate we have used \eqref{eq:weighted-to-Morrey} and \eqref{eq:weighted-to-spherical}.

The term $I_2$ is essentially the term $I_1$ with $|\nabla V|$ replaced by $|[\alpha_j,V]|/|x|$ both satisfying the same decay estimate in \eqref{eq:hyp-higher-m-3d}. Then, reasoning as above, we conclude
\begin{equation*}
	I_2\leq \sqrt{2}C_2(c_{\epsilon/2})^2\vertiii{\nabla\psi}
	\,{\|\psi\|}_R.
\end{equation*}
Finally, the estimate of $I_3$ follows by repeating the argument of Proposition~\ref{proposition:P-bound-higher}, yielding
\begin{equation*}
	I_3\leq 2 C_2\,  (c_{\varepsilon/2})^2  \vertiii{\nabla \psi}^2.
\end{equation*}
Now we will estimate the terms which depend on the mass $m.$ The term $I_{m,1}$ is essentially the term $I_1$ with $|\nabla V|$ replaced by $|[\beta, V]|$, both satisfying the same decay estimates in~\eqref{eq:hyp-higher-m}. Then, reasoning as above,  one has
\begin{equation*}
	I_{m,1}\leq \sqrt{2}C_4(c_{\epsilon/2})^2\vertiii{\nabla\psi}
	\,{\|\psi\|}_R.
\end{equation*}
Finally, let us focus on $I_{m,2}$. Using~\eqref{eq:hyp-higher-m-3d} and the Cauchy-Schwartz inequality, one has
\begin{equation*}
\begin{split}
I_{m,2}&=\int_{\R^d} |\partial_r \{V,\beta\}| |\psi|^2\,dx
\leq C_3 \int_{\R^d} \frac{|\psi|^2}{|x|^{3-\epsilon}+|x|^{3+\epsilon}}\,dx
\leq 2C_3 \int_{\R^d} \frac{|\psi|^2}{(|x|^{3/2-\epsilon/2}+|x|^{3/2+\epsilon/2})^2}\,dx\\
&\leq C_3 (c_{\epsilon/2})^2 \|u\|_R^2.
\end{split}
\end{equation*}
Gathering all the above estimates together with~\eqref{eq:P-first-estimate} with $d=3$ and the elementary inequality $|ab|\leq \tfrac{a^2}{2}+\tfrac{b^2}{2}$ gives the thesis.
\end{proof}

\subsection{Time-derivative}
This section will provide an estimate for the following quantity
\begin{equation*}
T:=\frac{d}{dt} \Im \langle H_D \psi, iG_\phi \psi \rangle,
\end{equation*}
which appears in the left-hand-side of~\eqref{eq:gen-id}. Here $\phi$ is defined as in~\eqref{eq:multiplier-choice}.

We shall start with the following lemma.
\begin{lemma}
Let $\phi$ be the multiplier defined in~\eqref{eq:multiplier-choice}. Then there exists a positive constant  $C_d>0,$ that depends on the dimension, such that
\begin{equation}\label{eq:im-nabla}
	|\Im \langle H_D \psi, iG_\phi \psi \rangle|\leq C_d (\|\nabla \psi\|_{L^2}^2 + m^2 \|\psi\|_{L^2}^2).
\end{equation}
\begin{proof}
Combining~\eqref{eq:iGphi-computed} with \eqref{eq:estimate-phiprime-phipprime} and the Hardy inequality~\eqref{eq:classical-Hardy}, one obtains 
\begin{equation*}
	\|iG_\phi \psi\|_{L^2}
	\leq \left (\frac{3(d-1)}{4(d-2)} + \frac{3}{4} \right) \|\nabla \psi\|_{L^2}^2=\frac{3(2d-3)}{4(d-2)}\|\nabla \psi\|_{L^2}^2
\end{equation*}
Using the Cauchy-Schwarz inequality and the bound above one has 
\begin{equation*}
\begin{split}
	|\Im \langle H_D \psi, iG_\phi \psi \rangle|
	&\leq \|\nabla \psi\|_{L^2} \|iG_\phi \psi\|_{L^2} + m \| \psi\|_{L^2} \|iG_\phi \psi\|_{L^2}\\
	&\leq c_d \Big[ \|\nabla \psi\|_{L^2}^2  + m \| \psi\|_{L^2} \|\nabla \psi\|_{L^2}\Big]\\
	&\leq \frac{3}{2} c_d \Big[ \|\nabla \psi\|_{L^2}^2  + m^2 \| \psi\|_{L^2}^2\Big].
\end{split}
\end{equation*}
here we have defined 
\begin{equation*}
	c_d=\frac{3(2d-3)}{4(d-2)}.
\end{equation*}
Then the thesis follows with $C_d=\tfrac{3}{2}c_d.$
\end{proof}
\end{lemma}

\subsection{Energy Bounds}
\label{sec:almost-conservation}
Before proving our main results Theorem~\ref{thm:3-dim-massless}--Theorem~\ref{thm:higher-dim-massive}, we observe that, even though, in principle, if $V\neq 0$ the kinetic energy $\|\nabla \psi(t)\|_{L^2}$ is not conserved, suitable assumptions on the potential give some \emph{a priori} estimates on the $H_D$-norm, as made precise in the following result.


\begin{proposition}[A priori estimate of the $H_D$-norm]\label{lemma:energy-bound}
Let Assumption~\ref{ass:HE} be satisfied.
%
Then 
any solution $\psi$ of~\eqref{eq:dirac-evolution} satisfies
\begin{equation}\label{eq:energy-bound}
	\|H_D \psi(t,\cdot)\|_{L^2}^2=
	\|\nabla \psi(t,\cdot)\|_{L^2}^2+m^2\| \psi(t,\cdot)\|_{L^2}^2\leq \tfrac{1}{(1-a)^2} \|(H_D+V) \psi(0,\cdot)\|_{L^2}^2
\end{equation} 
\end{proposition}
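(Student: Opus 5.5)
The argument has two ingredients: conservation of the full energy $\|H\psi(t)\|_{L^2}$, with $H=H_D+V$, and a comparison between $\|H_D\psi\|$ and $\|H\psi\|$ coming from the Kato smallness~\eqref{eq:smallness-potentia}. The identity part of~\eqref{eq:energy-bound} is just~\eqref{eq:free-dirac-identities}. Under Assumption~\ref{ass:HE}, Remark~\ref{rem:self} and the Kato--Rellich theorem show that $H$ is self-adjoint with $\mathcal D(H)=\mathcal D(H_D)=H^1$. Hence $e^{-itH}$ is unitary and commutes with $H$. For initial data $\psi(0)\in\mathcal D(H)$ we therefore get
\[
\|H\psi(t)\|_{L^2}=\|e^{-itH}H\psi(0)\|_{L^2}=\|H\psi(0)\|_{L^2}
\]
for all $t$. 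This is the conserved quantity we use.

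Next, at each fixed time we compare $H_D\psi$ with $H\psi$. By the triangle inequality and~\eqref{eq:smallness-potentia},
\[
\|H_D\psi\|_{L^2}\le \|H\psi\|_{L^2}+\|V\psi\|_{L^2}\le \|H\psi\|_{L^2}+a\|\nabla\psi\|_{L^2}.
\]
Since $\|\nabla\psi\|_{L^2}\le\|H_D\psi\|_{L^2}$ by~\eqref{eq:free-dirac-identities}, this gives $(1-a)\|H_D\psi\|_{L^2}\le\|H\psi\|_{L^2}$. Because $a<1$, we can divide by $1-a$. Squaring and inserting the conservation law yields
\[
\|H_D\psi(t)\|_{L^2}^2\le (1-a)^{-2}\|H\psi(t)\|^2_{L^2}=(1-a)^{-2}\|(H_D+V)\psi(0)\|_{L^2}^2,
\]
which is~\eqref{eq:energy-bound}.

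The only delicate step is justifying conservation of $\|H\psi(t)\|$ and the membership $\psi(t)\in\mathcal D(H)$ for all $t$. One route is the approximation already used in the section: smooth compactly supported data $f_j$, for which $\psi_j(t)\in\mathcal D(H)$ by the functional calculus. The other is a direct appeal to the fact that $\mathcal D(H)$ is invariant under the unitary group generated by a self-adjoint operator. Passing to the limit is then harmless, since both sides are continuous in the graph norm, which is equivalent to the $H^1$ norm.
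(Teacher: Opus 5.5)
Your proof is correct and follows the paper's argument. The triangle inequality with $\|V\psi\|_{L^2}\le a\|\nabla\psi\|_{L^2}\le a\|H_D\psi\|_{L^2}$ gives $(1-a)\|H_D\psi(t)\|_{L^2}\le\|(H_D+V)\psi(t)\|_{L^2}$, and conservation of $\|(H_D+V)\psi(t)\|_{L^2}$ finishes the proof.

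The only difference is how you justify that conservation. You take it directly from the functional calculus: $e^{-itH}$ preserves $\mathcal D(H)$ and commutes with $H$. The paper instead proves it separately, through a multiplier identity with test function $\partial_t(H_D+V)\psi$ and approximation by $C^\infty_c$ data. Your justification is shorter and needs none of the extra regularity ($H^{3/2}$, $H^{1/2}$--$H^{-1/2}$ duality) that the paper's argument invokes.
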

Before proving this result we shall need some preliminary results.

\begin{proposition}\label{prop:conservation-laws}
Let Assumption~\ref{ass:HE} be satisfied and let $\psi\in \mathcal{D}(H)$ be a solution to~\eqref{eq:dirac-evolution}.
Then one has 
	\begin{equation}\label{eq:mass-conservation}
		M(t):= \int_{\R^d} |\psi(t,x)|^2\, dx= M(0)
		\qquad \qquad \text{(Conservation of mass)}
	\end{equation}
	\begin{equation}\label{eq:cons-Hamiltonian}
		H(t):= \int_{\R^d} |(H_D+ V)\psi(t,x)|^2\, dx= H(0)
		\qquad \qquad \text{(Conservation of Hamiltonian)}
	\end{equation}
\end{proposition}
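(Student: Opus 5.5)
Both conservation laws follow from the self-adjointness of $H=H_D+V$ (Assumption~\ref{ass:HE} together with Remark~\ref{rem:self} and Kato--Rellich) and from the fact that the solution is $\psi(t)=e^{-itH}\psi(0)$. The route is either through Stone's theorem directly or through a short differentiation argument; I will set up the differentiation argument and use the unitary group to close the gaps.

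\emph{Conservation of mass.} Since $\psi(0)\in\mathcal D(H)$, Stone's theorem gives $\psi\in C^1(\R;L^2)\cap C(\R;\mathcal D(H))$ with $i\partial_t\psi=H\psi$. Then
\begin{equation*}
\frac{d}{dt}M(t)=2\Re\langle \partial_t\psi,\psi\rangle=2\Re\langle -iH\psi,\psi\rangle=2\Im\langle H\psi,\psi\rangle .
\end{equation*}
This vanishes because $\langle H\psi,\psi\rangle\in\R$ by symmetry of $H$. Equivalently, $M(t)=\|e^{-itH}\psi(0)\|^2=M(0)$ by unitarity.

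\emph{Conservation of the Hamiltonian.} The key observation is that $H$ commutes with its own unitary group on $\mathcal D(H)$, so $H\psi(t)=e^{-itH}H\psi(0)$. Hence $H(t)=\|H\psi(t)\|_{L^2}^2=\|e^{-itH}H\psi(0)\|^2=\|H\psi(0)\|^2=H(0)$. In other words, $\phi:=H\psi$ is itself a solution of the same equation, and the mass conservation above applies to it.

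If one prefers the formal multiplier computation, the steps are as follows. Multiply the equation by $\partial_t$ applied to $H\psi$, i.e.\ compute $\frac{d}{dt}\|H\psi\|^2=2\Re\langle H\partial_t\psi,H\psi\rangle=2\Im\langle H^2\psi,H\psi\rangle=0$. This requires $\psi(t)\in\mathcal D(H^2)$, which is only needed at the level of the approximating data $f_j$ with $H f_j$ regular. Density then passes to the limit using continuity of $e^{-itH}$ in the graph norm.

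The only delicate point is this justification of regularity (domain issues). It is entirely handled by the functional calculus, so I expect no real obstacle. The subsequent energy bound (Proposition~\ref{lemma:energy-bound}) then follows from~\eqref{eq:smallness-potentia}:
\begin{equation*}
\|H_D\psi\|\le\|H\psi\|+a\|\nabla\psi\|\le \|H\psi(0)\|+a\|H_D\psi\|,
\end{equation*}
using~\eqref{eq:free-dirac-identities}.
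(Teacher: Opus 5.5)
Your proof is correct. For the mass it matches the paper, but for the Hamiltonian it takes a genuinely different route. For the mass, the paper also tests the equation against $\psi$ and uses that $\langle H_D\psi,\psi\rangle$ and $\langle V\psi,\psi\rangle$ are real, exactly as in your differentiation argument.

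For $\|(H_D+V)\psi(t)\|^2$, the paper stays inside its multiplier framework. It approximates the datum by $f_j\in C^\infty_c$ and asserts that the corresponding solutions lie in $C(\R;H^{3/2})$. It then uses $\partial_t(H_D+V)\psi\in H^{-1/2}$ as a test function in an $H^{1/2}$--$H^{-1/2}$ duality pairing. From the reality of $\langle \partial_t\psi,(H_D+V)\partial_t\psi\rangle$ it gets $\frac{d}{dt}\|(H_D+V)\psi\|^2=0$, and finally passes to the limit.

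Your main argument instead uses only two facts. First, $H$ is self-adjoint, which you get from Kato--Rellich and the smallness assumption on $V$. Second, $e^{-itH}$ maps $\mathcal{D}(H)$ into itself and commutes with $H$ there. Hence $H\psi(t)=e^{-itH}H\psi(0)$, and unitarity closes the argument.

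This is shorter and holds for any self-adjoint $H$. It also sidesteps the paper's regularity claims ($H^{3/2}$ solutions, $H^{-1/2}$ test function), which are not automatic when $V$ is only relatively bounded. Your fallback multiplier computation, justified with data in $\mathcal{D}(H^2)$ and graph-norm continuity of the group, is essentially the paper's computation with the approximation done on the natural core.

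What the paper's route buys is uniformity of method: it is the same test-function mechanism used for the virial identities. It is also what the subsequent remark extends to conserved quantities $\langle (H_D+V)\psi, K\psi\rangle$ for $K$ commuting with $H_D+V$. That extension, too, follows immediately from your functional-calculus viewpoint.
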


\begin{proof} 
We begin by observing that, under Assumption~\ref{ass:HE}, for any $\psi\in \D(H_D+V)$ one has that both $H_D\psi$ and $V\psi$ belong to $L^2(\R^d;\C^N).$ This makes it rigorous the equation manipulations performed below.

We now turn to the proof of the mass conservation stated in~\eqref{eq:mass-conservation}.
Choosing $\psi$ as a test function in the weak formulation of~\eqref{eq:dirac-evolution} and considering the imaginary part of the resulting identity, we obtain
\begin{equation}\label{eq:id-cons-mass}
	\Im\langle i\partial_t \psi, \psi \rangle= \Im \langle H_D \psi, \psi \rangle + \Im \langle V\psi, \psi \rangle. 
\end{equation}	
Since both $H_D$ and $V$ are self-adjoint, both the scalar products on the right-hand side of~\eqref{eq:id-cons-mass} are real, therefore the right-hand side is zero. On the other hand, one can easily see that 
\begin{equation*}
\Im \langle i\partial_t \psi, \psi\rangle = \Re \langle \partial_t \psi, \psi\rangle = \frac{1}{2} \frac{d}{dt} \langle \psi, \psi \rangle,
\end{equation*}
this then gives the conservation of mass.

Let us now focus on the Hamiltonian conservation~\eqref{eq:id-other-cons}. First we approximate our initial datum $f\in \mathcal{D}(H)$ with $f_j\in C^{\infty}_c,$ then the corresponding solution $\psi_j=e^{itH}f_j\in C(\R; H^{3/2}(\R^d)).$ For the sake of  readability, we drop the subscript $j$ in the following.
Under our assumptions, one can show that $\partial_t(H_D+V)\psi\in H^{-1/2}(\R^d).$ Multiplying~\eqref{eq:dirac-evolution} by this test function and taking the real part of the resulting identity, one obtains
\begin{equation}
\label{eq:id-other-cons} 
	\Re \langle i\partial_t \psi, \partial_t(H_D+V) \psi \rangle
	= \Re \langle (H_D +V)\psi, \partial_t(H_D+V) \psi \rangle =\frac{1}{2} \frac{d}{dt} \|(H_D+V)\psi\|^2.
\end{equation}
Here the brackets are understood as duality pairing between $H^{1/2}$ and $H^{-1/2}.$
Since $(H_D+V)$ is self-adjoint and commutes with $\partial_t$, the scalar product $\langle \partial_t \psi, \partial_t(H_D+V) \psi \rangle$ is real, hence
$\Re \langle i\partial_t \psi, (H_D+V) \partial_t \psi \rangle= \Re i \langle \partial_t \psi, (H_D+ V)\partial_t \psi \rangle=-\Im\langle \partial_t \psi, (H_D+V) \partial_t \psi \rangle=0.$ 
Substituting into~\eqref{eq:id-other-cons}, one obtains
\begin{equation}\label{eq:cons-comm}
    \frac{1}{2}\frac{d}{dt} \| (H_D+ V)\psi\|=0.
\end{equation}
This gives~\eqref{eq:cons-Hamiltonian} for the approximate solution $\psi_j.$ The identity for the original solution is then obtained passing to the limit.
\end{proof}
\begin{remark}
The Hamiltonian conservation~\eqref{eq:cons-Hamiltonian} can be obtained as a straightforward consequence of the following more general result.
Let $K$ be a self-adjoint, time-independent operator such that $K\partial_t\psi\in \D(H_D+V)$ and
$[K, H_D + V]=0$.
Then $\langle (H_D + V)\psi(t,\cdot), K\psi(t,\cdot) \rangle$ is conserved. The proof follows the same argument: one chooses $K\partial_t\psi$ as a test function in~\eqref{eq:dirac-evolution} and takes the real part of the resulting identity.
\end{remark}

\medskip
We are now ready to prove Proposition~\ref{lemma:energy-bound}

\begin{proof}[Proof of Proposition~\ref{lemma:energy-bound}]
By~\eqref{eq:commutation-rel-dirac-matr} and ~\eqref{eq:smallness-potentia} we have 
\[
\|V\psi(t,\cdot)\|_{L^2} \leq a\|\nabla\psi(t,\cdot)\|_{L^2}
\leq 
a\sqrt{\|\nabla\psi(t,\cdot)\|_{L^2}^2+m^2\|\psi(t,\cdot)\|_{L^2}^2}
=a\|H_D\psi(t,\cdot)\|_{L^2}
\]
Thanks to this and by the triangular inequality, we obtain
\[
\|H_D\psi(t,\cdot)\|_{L^2}
\leq
\|(H_D+V)\psi(t,\cdot)\|_{L^2}+\|V\psi(t,\cdot)\|_{L^2}
\leq \|(H_D+V)\psi(t,\cdot)\|_{L^2}+a\|H_D\psi(t,\cdot)\|_{L^2}
\]
Combining this with~\eqref{eq:cons-Hamiltonian} concludes the proof.
\end{proof}

\begin{remark}\label{rem:almost-conservation}
When Assumption~\ref{ass:HE} is satisfied, thanks to \eqref{eq:smallness-potentia} we can modify~\eqref{eq:energy-bound} as follows
\[
\|\nabla \psi(t,\cdot)\|_{L^2}^2+m^2\| \psi(t,\cdot)\|_{L^2}^2\leq \tfrac{(1+a)^2}{(1-a)^2}\left( \|\nabla \psi(0,\cdot)\|_{L^2}^2+m^2\| \psi(0,\cdot)\|_{L^2}\right)
\]
This result can be interpreted as an almost conservation of the $H_D$-norm of solutions to~\eqref{eq:dirac-evolution}. 
As a consequence, the term $\|(H_D+V)\psi(0,\cdot)\|_{L^2}^2$ appearing in Theorems~\ref{thm:3-dim-massless}--\ref{thm:higher-dim-massive} can be replaced by $\|\nabla\psi(0,\cdot)\|_{L^2}^2+m^2\|\psi(0,\cdot)\|_{L^2}^2.$
\end{remark}

\subsection{Proof of local-smoothing estimates}

With all the previous tools at hands, we are finally able to prove our results
Theorem~\ref{thm:3-dim-massless} - Theorem~\ref{thm:higher-dim-massive}. We shall start with the result in higher dimensions $d\geq 4,$ namely with Theorem~\ref{thm:higher-dim-massive}.
 
 \begin{proof}[Proof of Theorem~\ref{thm:higher-dim-massive}]
 From identity~\eqref{eq:gen-id} one has
 \begin{equation*}
 	-\frac{d}{dt} \Im \langle H_D \psi, iG_\phi \psi \rangle
 	= K + P, 
 \end{equation*}
 where $\phi$ is defined in~\eqref{eq:multiplier-choice}, $K$ is as in~\eqref{eq:kinetic} and $P$ is defined in~\eqref{eq:P-definition}.
From the bound for $K$ contained in Proposition~\ref{prop:K-lower-bound} one has 
 \begin{equation*}
 \begin{split}
 -\frac{d}{dt} \Im \langle H_D \psi, iG_\phi \psi \rangle 
 &\geq \frac{(d-1)}{4d} \frac{1}{R}  \int_{|x|\leq R} |\nabla \psi|^2\, dx
 + \frac{(d-1)(d-3)}{8} \int_{\R^d} \frac{|\psi|^2}{|x|^3}\, dx \\
	&\phantom{\geq} +\frac{1}{2} \int_{\R^d} \frac{|\partial_\tau \psi|^2}{|x|}\, dx
	+ \frac{(d-1)}{16}\frac{1}{R^2} \int_{|x|=R} |\psi|^2\, d\sigma
	- |P|.
\end{split}
 \end{equation*}
 Using the bound~\eqref{eq:final-P-higher} for $|P|$ contained in Proposition~\ref{proposition:P-bound-higher} one gets
\begin{equation*}
\begin{split}
 	 -\frac{d}{dt} \Im \langle H_D \psi, iG_\phi \psi \rangle 
 &\leq \delta \frac{1}{R} \int_{|x|\leq R} |\nabla \psi|^2\, dx
 + \delta \int_{\R^d} \frac{|\psi|^2}{|x|^3}\, dx\\
	&\phantom{\geq }+\frac{1}{2} \int_{\R^d} \frac{|\partial_\tau \psi|^2}{|x|}\, dx
	+ \frac{(d-1)}{16}\frac{1}{R^2} \int_{|x|=R} |\psi|^2\, d\sigma,
\end{split}
\end{equation*} 
where $\delta$ is an explicit strictly positive constant, whose value can be traced from the proof.

Integrating in time the previous estimate, taking the supremum over $R>0,$ using~\eqref{eq:im-nabla} together with the energy bound~\eqref{eq:energy-bound} and the mass conservation~\eqref{eq:mass-conservation}, one has
\begin{multline*}
 	 \delta  \sup_{R>0} \frac{1}{R} \int_0^\infty \int_{|x|\leq R} |\nabla \psi|^2\, dx\, dt
 	 + \delta \int_0^\infty \int_{\R^d} \frac{|\psi|^2}{|x|^3}\, dx\, dt\\
 	 + \frac{1}{2}\int_0^\infty\int_{\R^d} \frac{|\partial_\tau \psi|^2}{|x|}\, dx\, dt 	
 	 + \frac{(d-1)}{16} \sup_{R>0} \frac{1}{R^2} \int_0^\infty\int_{|x|=R} |\psi|^2\, d\sigma \, dt
 	 \lesssim_d \| H\psi(0,\cdot) \|_{L^2}^2, 
\end{multline*}  
which is the thesis.
 \end{proof}
 
 \begin{proof}[Proof of Theorem~\ref{thm:higher-dim-massless}, Theorem~\ref{thm:3-dim-massive} and Theorem~\ref{thm:3-dim-massless}]
 The proof of those results follows the same steps as the previous proof, simply one has to use a different bound for $|P|$ than~\eqref{eq:final-P-higher}, namely~\eqref{eq:final-P-higher-mzero}, ~\eqref{eq:final-P-3d} and~\eqref{eq:final-P-3d-massless}, respectively.   
 \end{proof}

\appendix
\section{Integral Estimates}
This appendix is devoted to the proof of~\eqref{eq:weighted-to-Morrey} and~\eqref{eq:weighted-to-spherical}.
\label{appendix:1}
\begin{proof}[Proof of~\eqref{eq:weighted-to-Morrey} and~\eqref{eq:weighted-to-spherical}]

We recall that for any integrable function $f$
\begin{equation}\label{eq:annulus-int}
    \int_{\R^d} f\, dx = \sum_{j\in \mathbb{Z}} \int_{2^j\leq |x|\leq 2^{j+1}} f\, dx.
\end{equation}
Thanks to this and since the function $r\in (0,+\infty)\mapsto (r^{1/2-\delta}+r^{1/2+\delta})^{-2}$ is decreasing for $0<\delta<1/2$, we can rewrite the right-hand side of~\eqref{eq:weighted-to-Morrey} as
	\begin{equation*}
	\begin{split}
		\int_{\R^d} \frac{|u|^2}{(|x|^{1/2-\delta} + |x|^{1/2 + \delta})^2}dx
		&=
		\sum_{j\in \Z} \int_{2^{j}\leq |x|\leq 2^{j+1}}\frac{|u|^2}{(|x|^{1/2-\delta} + |x|^{1/2 + \delta})^2}dx\\
		&\leq 
		\sum_{j\in\Z} \frac{2^{j+1}}{((2^j)^{1/2-\delta} + (2^j)^{1/2 + \delta})^2}\cdot 
		 \frac{1}{2^{j+1}}\int_{|x|\leq 2^{j+1}}|u|^2 dx\\
&\leq c_\delta^2 \vertiii{u}^2,
		\end{split}
	\end{equation*}
	where we have defined 
	\begin{equation}\label{eq:ceps}
		c_\delta^2:= \sum_{j\in\Z} \frac{2}{(2^{-\delta j}+2^{\delta j})^2}\approx \frac{2}{\delta\log(2)}.
	\end{equation}
	Let us now consider \eqref{eq:weighted-to-spherical}. Reasoning as above and using the coarea formula we have
	\begin{equation*}
	\begin{split}
\int_{\R^d} \frac{|u|^2}{(|x|^{3/2-\delta} + |x|^{3/2+\delta})^2}\,dx
	    &=
	    \sum_{j\in\mathbb{Z}}
	    \int_{2^j}^{2^{j+1}}\frac{1}{\left(\rho^{1/2-\delta}+\rho^{1/2+\delta}\right)^2}
	    \cdot\left(\frac{1}{\rho^2}\int_{|x|=\rho} |u|^2\,d\sigma\right)d\rho
	    \\
	    &\leq
	    \|u\|_R^2
	    \sum_{j\in\Z} \frac{2^{j+1}-2^j}{((2^j)^{1/2-\delta} + (2^j)^{1/2 + \delta})^2}= \frac{c_\delta^2}{2} \|u\|_R^2.
	\end{split}
	\end{equation*}
	This concludes the proof of~\eqref{eq:weighted-to-Morrey}.
\end{proof}

\section*{Acknowledgements}

L.C. is supported by the grant Ramón y Cajal RYC2021-032803-I  funded by MCIN/AEI/10.13039/50110 and by Ikerbasque.

L.C, L.F. and F.P. are partially supported by the project PID2024-155550NB-I00 funded by MICIU/AEI/10.130	\\
39/501100011033 and by ERDF/EU. 

L.F. and F.P. are partially supported by the Basque Government through the BERC 2022-2025 program, by the Ministry of Science and Innovation: BCAM Severo Ochoa accreditation CEX2021-001142-S/MICIN/AEI/
10.13039/ 501100011033 PID2021-123034NB-I00 funded by MCIN/AEI/10.13039/501100011033

/FEDER,UE. 

L.F. is also supported by the projects IT1615-22 funded by the Basque Government, and by Ikerbasque.



\bigskip

\bibliographystyle{amsalpha}

\end{document}